\newtheorem{thm}{Theorem}[section]
\newtheorem{lem}[thm]{Lemma}
\theoremstyle{definition}
\theoremstyle{remark}
\newtheorem{rem}[thm]{Remark}
\newtheorem*{ex}{Example}
\numberwithin{equation}{section}
\newcommand{\BibTeX}{B\kern-0.1emi\kern-0.017emb\kern-0.15em\TeX}
\newcommand{\XYpic}{$\mathrm{X\kern-0.3em\raisebox{-0.18em}{Y}}$-$\mathrm{pic}\,$}
\newcommand{\BR}{\mathbb{R}}
\newcommand{\BC}{\mathbb{C}}
\newcommand{\C}{C \kern -0.1em \ell} 
\def\A{{\rm A}}
\def\B{{\rm B}}
\def\Q{{\rm Q}}
\def\Z{{\rm Z}}
\def\H{{\rm H}}
\def\D{{\rm D}}
\def\ad{{\rm ad}}
\def\mod{{\rm \;mod\; }}
\def\Lambd{{\rm\Lambda}}
\def\Gamm{{\rm\Gamma}}
\def\Aut{{\rm Aut}}
\def\ker{{\rm ker}}
\def\rad{{\rm rad}\;}
\def\Pin{{\rm Pin}}
\def\Spin{{\rm Spin}}
\newcommand{\F}{\mathbb{F}}
\newcommand{\ed}{\end{document}}
\begin{document}

%
%
%
%
%
%
%
%
%

\title[Generalized Degenerate Clifford and Lipschitz Groups]
 {Generalized Degenerate Clifford and Lipschitz Groups in Geometric Algebras}
\author[E. Filimoshina]{Ekaterina Filimoshina}
\address{%
HSE University\\
Moscow 101000\\
Russia
\medskip}
\address{
and
\medskip}
\address{
Skolkovo Institute of Science and Technology \\
 Moscow 121205\\
 Russia 
}
\email{filimoshinaek@gmail.com}
%
\author[D. Shirokov]{Dmitry Shirokov}
\address{%
HSE University\\
Moscow 101000\\
Russia
\medskip}
\address{
and
\medskip}
\address{
Institute for Information Transmission Problems of the Russian Academy of Sciences \\
Moscow 127051 \\
Russia}
\email{dm.shirokov@gmail.com}
\subjclass{15A66, 11E88}
\keywords{geometric algebra, Clifford algebra, degenerate geometric algebra, Clifford group, Lipschitz group}
\date{\today}
\dedicatory{Last Revised:\\ \today}
\begin{abstract}
This paper introduces and studies generalized degenerate Clifford and Lipschitz groups in geometric (Clifford) algebras. These Lie groups preserve the direct sums of the subspaces determined by the grade involution and reversion under the adjoint and twisted adjoint representations in degenerate geometric algebras. We prove that the generalized degenerate Clifford and Lipschitz groups can be defined using centralizers and twisted centralizers of fixed grades subspaces and the norm functions that are widely used in the theory of spin groups. We study the relations between these groups and consider them in the particular cases of plane-based geometric algebras and Grassmann algebras. The corresponding Lie algebras are studied. The presented groups are interesting for the study of generalized degenerate spin groups and applications in computer science, physics, and engineering.
\end{abstract}
\label{page:firstblob}
\maketitle

\section{Introduction}

This paper introduces and studies generalized degenerate Clifford and Lipschitz groups in geometric (Clifford) algebras  $\C_{p,q,r}$ \cite{hestenes,lounesto,p}. Ordinary Clifford and Lipschitz groups preserve the grade-$1$ subspace under the adjoint and twisted adjoint representations respectively. These groups are usually considered in the non-degenerate algebras $\C_{p,q,0}$  \cite{lg1,lounesto,p}, as well as in the degenerate ones \cite{Abl,crum_book,br2,br1,dereli}, and are used in the theory of spin groups and in different applications in physics \cite{cv2,phys,br2} and computer science, in particular in equivariant neural networks \cite{cNN,cNN0,gatr,zhdanov}. The generalized degenerate Clifford and Lipschitz groups preserve the direct sums of the subspaces determined by the grade involution and reversion under the same representations as the ordinary Clifford and Lipschitz groups in the degenerate and non-degenerate $\C_{p,q,r}$. The introduced groups contain the ordinary Clifford and Lipschitz groups as subgroups and are useful for the study of generalized degenerate spin groups.

In this work, we prove that the generalized degenerate Clifford and Lipschitz groups can be defined using centralizers and twisted centralizers \cite{cen} of the subspaces of fixed grades and the norm functions that are used in the definitions of spin groups. We prove that the generalized Clifford and Lipschitz groups are related to each other through multiplication. We provide several examples of the groups in the particular cases of  the Grassmann algebras $\C_{0,0,n}$ and the Clifford algebras $\C_{p,q,1}$, including the plane-based geometric algebras (PGA) $\C_{p,0,1}$ \cite{pga}, which are widely used in applications. We find the corresponding Lie algebras and their dimensions. The current work generalizes the results of the papers \cite{GenSpin,OnInner}, where similar questions are addressed in the case of the non-degenerate algebra $\C_{p,q,0}$. The papers \cite{ICACGA,OnSomeLie} discuss the groups preserving the subspaces of fixed parity under the adjoint representation and the twisted adjoint representations in degenerate and non-degenerate $\C_{p,q,r}$. 

In Section \ref{section_def}, we discuss degenerate and non-degenerate geometric algebras $\C_{p,q,r}$ and introduce the necessary notation. Section \ref{section_centralizers} considers centralizers and twisted centralizers in $\C_{p,q,r}$ and the norm functions. In Section~\ref{section_ds_qt}, we introduce and study the generalized degenerate Clifford and Lipschitz groups. A comprehensive list of these groups is presented in Table~\ref{table_all_groups}. In Section \ref{section_relations}, we find the relations between the considered groups. Section \ref{section_examples} provides several examples of the groups in the cases of the algebras $\C_{p,q,1}$ and $\C_{0,0,n}$. In Appendix~\ref{section_algebras}, we study the corresponding Lie algebras, which are summarized in Tables~\ref{table_lie_algebras_A}--\ref{table_lie_algebras_Q2}.

This paper is an extended version of the short note (12 pages) in Conference Proceedings \cite{AB_CGI} (Empowering Novel Geometric Algebra for Graphics \& Engineering, Computer Graphics International, Geneva, Switzerland, 2024). Sections \ref{section_relations} and \ref{section_examples} and Appendix \ref{section_algebras} are new, Subsections \ref{section_checkAB} and \ref{section_tildeAB} are extended. The new results include: a proof of the relations between the generalized Lipschitz groups via multiplication, examples of these groups and their relations in particular cases $\C_{p,q,1}$ and $\C_{0,0,n}$, and a study of the corresponding Lie algebras. Theorems \ref{lemma_AH}, \ref{theorem_rel_A}, \ref{theorem_rel_B}, \ref{thm_alg} and Lemma \ref{lemma_mult_c3} are presented for the first time.

\section{The subspaces defined by the grade involution and reversion}\label{section_def}

Let us consider the geometric (Clifford) algebra \cite{hestenes,lounesto,p} $\C(V)=\C_{p,q,r}$, $p+q+r=n\geq1$, over a vector space $V$ with a symmetric bilinear form, which can be real $\BR^{p,q,r}$ or complex  $\BC^{p+q,0,r}$. 
In this work, we consider both the case of the non-degenerate geometric algebras $\C_{p,q,0}$ and the case of the degenerate geometric algebras $\C_{p,q,r}$, $r\neq0$. 
We use $\F$ to denote the field of real numbers $\BR$ in the first case and the
field of complex numbers $\BC$ in the second case respectively.
We use $\Lambd_r$ to denote the subalgebra $\C_{0,0,r}$, which is the Grassmann (exterior) algebra \cite{phys,lounesto}.
If $r=0$, then $\Lambd_0$ is a one-dimensional space spanned by the identity element.
The identity element is denoted by $e$, the generators are denoted by $e_a$, $a=1,\ldots,n$. The generators satisfy the following conditions: 
\begin{eqnarray}\label{gen}
    e_a e_b + e_b e_a = 2 \eta_{ab}e,\qquad \forall a,b=1,\ldots,n,
\end{eqnarray}
where $\eta=(\eta_{ab})$ is the diagonal matrix with $p$ times $+1$, $q$ times $-1$, and $r$ times $0$ on the diagonal in the real case $\C(\BR^{p,q,r})$ and $p+q$ times $+1$ and $r$ times $0$ on the diagonal in the complex case $\C(\BC^{p+q,0,r})$.

Consider the subspaces $\C^k_{p,q,r}$ of fixed grades $k=0,\ldots,n$. Their elements are linear combinations of the basis elements $e_A=e_{a_1\ldots a_k}:=e_{a_1}\cdots e_{a_k}$, $a_1<\cdots<a_k$, labeled by ordered multi-indices $A$ of length $k$, where $0\leq k\leq n$. The grade-$0$ subspace is denoted by $\C^0$ without the lower indices $p,q,r$, since it does not depend on the Clifford algebra's signature. 
We have $\C^k_{p,q,r}=\{0\}$ for $k<0$ and $k>n$. We use the following notation:
\begin{eqnarray}
\C^{\geq k}_{p,q,r} &:=& \C^k_{p,q,r}\oplus\C^{k+1}_{p,q,r}\oplus\cdots\oplus\C^n_{p,q,r},\label{def_geq_k}
\\
\C^{\leq k}_{p,q,r} &:=& \C^0\oplus\C^1_{p,q,r}\oplus\cdots\oplus \C^k_{p,q,r}.\label{def_leq_k}
\end{eqnarray}

The grade involution of an element  $U\in\C_{p,q,r}$ is denoted by $\widehat{U}$. The grade involution defines the even $\C^{(0)}_{p,q,r}$ and odd $\C^{(1)}_{p,q,r}$ subspaces:
\begin{eqnarray}\label{evenodd}
\C^{(k)}_{p,q,r} = \{U\in\C_{p,q,r}:\;\; \widehat{U}=(-1)^kU\}=\bigoplus_{j=k\mod{2}}\;\C^j_{p,q,r},\quad k=0,1.
\end{eqnarray}
Any element $U\in\C_{p,q,r}$ can be represented as a sum 
$U=\langle U\rangle_{(0)}+\langle U\rangle_{(1)}$, where  $\langle U\rangle_{(0)}\in\C^{(0)}_{p,q,r}$ and $\langle U\rangle_{(1)}\in\C^{(1)}_{p,q,r}$.
We use angle brackets $\langle \cdot \rangle_{(l)}$ to denote the operation of projection of multivectors onto the subspaces $\C^{(l)}_{p,q,r}$, $l=0,1$.
For an arbitrary subset $\H\subseteq\C_{p,q,r}$, we have

\begin{eqnarray}\label{proj_def}
\langle \H \rangle_{(0)} := \H\cap\C^{(0)}_{p,q,r},\qquad \langle \H \rangle_{(1)} := \H\cap\C^{(1)}_{p,q,r}.
\end{eqnarray}

The reversion is denoted by $\widetilde{U}$, the Clifford conjugation is denoted by $\widehat{\widetilde{U}}$. 
The grade involution and reversion define four subspaces $\C^{\overline{0}}_{p,q,r}$,  $\C^{\overline{1}}_{p,q,r}$,  $\C^{\overline{2}}_{p,q,r}$, and  $\C^{\overline{3}}_{p,q,r}$ (they are called the subspaces of quaternion types $0, 1, 2$, and $3$ respectively in the papers \cite{quat1,quat2,quat3}):
\begin{eqnarray}
\C^{\overline{k}}_{p,q,r}=\{U\in\C_{p,q,r}:\; \widehat{U}=(-1)^k U,\;\; \widetilde{U}=(-1)^{\frac{k(k-1)}{2}} U\},\quad k=0, 1, 2, 3.\label{qtdef}
\end{eqnarray}
Note that the Clifford algebra $\C_{p,q,r}$ can be represented as a direct sum of the subspaces $\C^{\overline{k}}_{p,q,r}$, $k=0, 1, 2, 3$, and viewed as $\mathbb{Z}_2\times\mathbb{Z}_2$-graded algebra with respect to the commutator and anticommutator \cite{b_lect}.
To denote the direct sum of different subspaces, we use the upper multi-index and omit the direct sum sign. For instance, $\C^{(1)\overline{2}4}_{p,q,r}:=\C^{(1)}_{p,q,r}\oplus\C^{\overline 2}_{p,q,r}\oplus\C^{4}_{p,q,r}$.

\section{Centralizers, twisted centralizers, and norm functions in $\C_{p,q,r}$}\label{section_centralizers}

In this section, we provide several facts about centralizers and twisted centralizers of fixed subspaces in $\C_{p,q,r}$. We use these statements to prove Theorems \ref{theorem_AB}--\ref{theorem_tildeQ} in Section \ref{section_ds_qt}. Lemmas \ref{cases_we_use} and \ref{centralizers_qt_ds} are provided in \cite{cen}, Lemma \ref{lemma_for_AB} is new.

Consider the centralizers  $\Z^{m}_{p,q,r}$ and twisted  centralizers $\check{\Z}^m_{p,q,r}$ of the fixed grade subspaces $\C^{m}_{p,q,r}$, $m=0,\ldots,n$:
\footnote{Note that we can also consider another twisted centralizer defined as $$\tilde{\Z}^m_{p,q,r}:=\{X\in\C_{p,q,r}:\quad X\langle V\rangle_{(0)} + \widehat{X} \langle V\rangle_{(1)} = VX,\quad \forall V\in\C^{m}_{p,q,r}\}.$$ It corresponds to $\tilde{\ad}$ (see the formula (\ref{twa22}) in Section \ref{section_ds_qt}). We have $\tilde{\Z}^m_{p,q,r} = \Z^m_{p,q,r}$ if $m$ is even and $\tilde{\Z}^m_{p,q,r}=\check{\Z}^m_{p,q,r}$ if $m$ is odd.}
\begin{eqnarray}
\Z^{m}_{p,q,r}&:=&\{X\in\C_{p,q,r}:\quad X V = V X,\quad \forall V\in\C^{m}_{p,q,r}\},\label{def_Zm}
\\
    \check{\Z}^m_{p,q,r}&:=&\{X\in\C_{p,q,r}:\quad \widehat{X} V = V X,\quad \forall V\in\C^{m}_{p,q,r}\}.\label{def_chZm}
\end{eqnarray}
The center of the geometric algebra $\C_{p,q,r}$ denoted by $\Z_{p,q,r}$ is  the centralizer of the grade-$1$ subspace $\C^{1}_{p,q,r}$ (see the formula (\ref{cc_2}) in Lemma \ref{cases_we_use}) and of the entire geometric algebra $\C_{p,q,r}$ as well.
It is well known (see, for example, \cite{br1}) that
\begin{eqnarray}\label{Zpqr}
\Z_{p,q,r}=
\left\lbrace
\begin{array}{lll}
\Lambd^{(0)}_{r}\oplus\C^n_{p,q,r},&&\mbox{$n$ is odd},
\\
\Lambd^{(0)}_{r},&&\mbox{$n$ is even}.
\end{array}
\right.
\end{eqnarray}
The paper \cite{cen} finds an explicit form of the centralizers $\Z^m_{p,q,r}$ and twisted centralizers $\check{\Z}^m_{p,q,r}$ for any $m=0,\ldots,n$ (Theorem 3.6). 
In Lemma \ref{cases_we_use}, for the reader's convenience, we write out the particular cases that we use in Section \ref{section_ds_qt} of this paper.
A few words on the notation in Lemma \ref{cases_we_use}. The spaces $\C^k_{p,q,0}$ and $\Lambd^k_r$, $k=0,\ldots,n$, are regarded as subspaces of $\C_{p,q,r}$. 
If $k>r$ or $k<0$, then $\Lambd^k_r=\{0\}$.
By $\{\C^k_{p,q,0}\Lambd^l_r\}$, we denote the subspace of $\C_{p,q,r}$ spanned by elements of the form $ab$, where $a\in \C^k_{p,q,0}$ and $b\in\Lambd^l_r$.

\begin{lem}[\cite{cen}]\label{cases_we_use}
The centralizers and twisted centralizers of the subspaces of fixed grades have the following form:
\begin{eqnarray}
\!\!\!\!\!\!\!\!\!\!\!\!\!&&\Z^1_{p,q,r}=\Z_{p,q,r};\qquad \Z^2_{p,q,r}=
\left\lbrace
    \begin{array}{lll}
\Lambd_r\oplus\C^{n}_{p,q,r},&& r\neq n,
\\
\Lambd_r,&& r=n;
\end{array}
    \right.\label{cc_2}
\\
\!\!\!\!\!\!\!\!\!\!\!\!\!\!\!\!&&\Z^3_{p,q,r}\!=\!\left\lbrace
            \begin{array}{lll}\label{cc_3}
            \Lambd^{(0)}_r\oplus\Lambd^{n-2}_r\oplus \{\C^{1}_{p,q,0}(\Lambd^{n-3}_r\oplus\Lambd^{n-2}_r)\}
            \\
            \quad\oplus\{\C^{2}_{p,q,0}\Lambd^{n-3}_r\}\!\oplus\!\C^{n}_{p,q,r}, \!\!\!\!\!&&\mbox{$n$ is odd},
            \\
            \Lambd^{(0)}_r\!\oplus\!\Lambd^{n-1}_r\oplus \{\C^{1}_{p,q,0}\Lambd^{\geq n-2}_r\}\oplus \{\C^{2}_{p,q,0}\Lambd^{n-2}_r\}, \!\!\!\!\!&&\mbox{$n$ is even};
            \end{array}
            \right.
\\
\!\!\!\!\!\!\!\!\!\!\!\!\!\!\!\!\!\!\!\!\!\!&&\Z^4_{p,q,r}\!=\!
\left\lbrace
\begin{array}{lll}\label{cc_4}
\Lambd_r\oplus \{\C^{1}_{p,q,0}(\Lambd^{n-3}_r\oplus\Lambd^{n-2}_r)\}
\\
\quad\oplus\{\C^{2}_{p,q,0}(\Lambd^{n-4}_r\oplus\Lambd^{n-3}_r)\}\oplus\C^{n}_{p,q,r}, && r\neq n,
\\
\Lambd_r, && r=n;
\end{array}
\right.
\\
   \!\!\!\! \!\!\!\!\!\!\!\!\!\!\!\!\!\!\!\!\!\!&&\check{\Z}^1_{p,q,r}=\Lambd_r;\qquad \check{\Z}^3_{p,q,r}=\Lambd_r\oplus \{\C^{1}_{p,q,0}\Lambd^{\geq n-2}_r\}\oplus \{\C^{2}_{p,q,0}\Lambd^{\geq n-3}_r\};\label{ch_cc_1}
    \\
    \!\!\!\!\!\!\!\!\!\!\!\!\!\!\!\!\!\!\!\!\!\!&&\check{\Z}^2_{p,q,r}=
    \left\lbrace
    \begin{array}{lll}\label{ch_cc_2}
    \Lambd^{(0)}_r\oplus\Lambd^{n}_r\oplus \{\C^{1}_{p,q,0}\Lambd^{n-1}_r\},\!\!\!\!\!\!\!\!\!\!&&\mbox{$n$ is odd},
    \\
    \Lambd^{(0)}_r\oplus\Lambd^{n-1}_r\oplus \{\C^{1}_{p,q,0}\Lambd^{n-2}_r\}\oplus\C^{n}_{p,q,r},\!\!\!\!\!\!\!\!\!\!&&\mbox{$n$ is even},\;\; r\neq n,
    \\
    \Lambd^{(0)}_r\oplus\Lambd^{n-1}_r,\!\!\!\!\!\!\!\!\!\!&&\mbox{$n$ is even},\;\; r=n.
    \end{array}
    \right.
\end{eqnarray}
\end{lem}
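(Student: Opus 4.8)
The plan is to reduce the defining relations (\ref{def_Zm}) and (\ref{def_chZm}) to a purely combinatorial condition on basis blades and then enumerate, for each relevant $m$, which blades survive. First I would record the commutation rule for basis blades: for ordered multi-indices $A,B$ one has $e_Ae_B=(-1)^{|A||B|-|A\cap B|}\,e_Be_A$, which follows by anticommuting the distinct generators past one another and cancelling the common squares $e_c^2=\eta_{cc}e$ via (\ref{gen}). The decisive degenerate phenomenon is that if $A\cap B$ contains an index $c$ with $\eta_{cc}=0$, then $e_Ae_B=e_Be_A=0$, so such a $B$ imposes no constraint; this is precisely what makes the degenerate centralizers larger than their non-degenerate counterparts and forces the $\Lambd_r$-factors to appear.

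Next I would linearize. Writing $X=\sum_A x_Ae_A$ and fixing a grade-$m$ blade $e_B$, the map $A\mapsto A\triangle B$ is a bijection, so the nonzero products $e_Ae_B$ are pairwise distinct blades that cannot cancel. Hence $Xe_B=e_BX$ forces, term by term, $x_A(e_Ae_B-e_Be_A)=0$ for every $A$. Consequently $\Z^m_{p,q,r}$ is the coordinate subspace spanned by those blades $e_A$ that commute with every grade-$m$ blade, and by the sign rule $e_A$ qualifies exactly when $m|A|\equiv|A\cap B|\pmod 2$ for all $|B|=m$ with $A\cap B$ free of null indices. The twisted case (\ref{def_chZm}) is identical except that the grade involution contributes an extra factor $(-1)^{|A|}$, turning the condition into $(m+1)|A|\equiv|A\cap B|\pmod 2$.

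I would then parametrize each blade by the number $i$ of non-null and $j$ of null generators it contains, so $|A|=i+j$, and ask which overlaps $s:=|A\cap B|$ (necessarily counting non-null indices only) are realizable by a grade-$m$ blade $B$ that avoids the $j$ null indices of $A$. A counting argument gives the realizable range $s\in[\,\max(0,m-n+i+j),\ \min(i,m)\,]$. Three regimes then arise: if this range is empty, no $B$ constrains $e_A$ and $e_A$ lies in the centralizer; if it is a single value $s_0$, then $e_A$ lies in the centralizer iff $m(i+j)\equiv s_0$ (respectively $(m+1)(i+j)\equiv s_0$ in the twisted case); and if the range contains two consecutive integers, both parities of $s$ occur and $e_A$ is excluded. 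Specializing $m=1$ recovers the center (\ref{Zpqr}), which serves as a consistency check on the set-up.

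Finally I would run the $(i,j)$-classification for $m=1,2,3,4$ (and $m=1,2,3$ in the twisted case), grouping the admissible blades by their non-null grade $i$ and null grade $j$ and rewriting each group in the paper's notation: the $i=0$ contributions assemble into pieces of $\Lambd_r$ or $\Lambd^{(0)}_r$, the $i=1,2$ contributions into subspaces $\{\C^i_{p,q,0}\Lambd^l_r\}$ with $l$ ranging over the admissible null grades, and the top-grade blades into $\C^n_{p,q,r}$. The main obstacle is exactly this bookkeeping: the lower endpoint $\max(0,m-n+i+j)$ and the cap $\min(i,m)$ shift with the parity of $n$ and collapse in the boundary case $r=n$ (where $i=0$ is forced and every formula reduces to $\Lambd_r$). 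Hence the even/odd-$n$ split and the $r=n$ special cases in (\ref{cc_2})--(\ref{ch_cc_2}) must each be isolated and matched against the corresponding admissible $(i,j)$ set, with particular care near the boundaries where the realizable range transitions between the empty, single-value, and two-value regimes.
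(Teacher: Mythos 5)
Your plan is correct, but there is nothing in the paper to compare it against step by step: the paper does not prove this lemma at all. It is imported verbatim from \cite{cen} (the relevant special cases of Theorem 3.6 there), and within this paper the only justification offered is that citation. Your reconstruction is therefore a genuinely different, self-contained route, and its key ingredients are all sound: the sign rule $e_Ae_B=(-1)^{|A||B|-|A\cap B|}e_Be_A$; the observation that a common null index annihilates both products, so such a $B$ imposes no constraint (this is exactly the mechanism producing the $\Lambd_r$-factors and the $r=n$ collapses); the separation of $Xe_B-e_BX$ into independent terms via the bijection $A\mapsto A\triangle B$, which shows that the centralizers and twisted centralizers are spanned by the basis blades they contain; and the reduction to the parity conditions $m|A|\equiv |A\cap B|$, respectively $(m+1)|A|\equiv |A\cap B|$, modulo $2$, required over the realizable overlap range $s\in[\max(0,m-n+i+j),\min(i,m)]$, which is indeed a full integer interval. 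I checked that your three-regime classification reproduces (\ref{Zpqr}) for $m=1$, the formula (\ref{cc_2}) for $m=2$, both parity branches of (\ref{cc_3}) for $m=3$, and (\ref{ch_cc_1})--(\ref{ch_cc_2}) in the twisted cases $m=1,2$; the sets $\Z^4_{p,q,r}$ and $\check{\Z}^3_{p,q,r}$ are identical bookkeeping. What your route buys is transparency and self-containedness where the paper offers none --- in particular it makes visible why empty constraint ranges create the extra top-grade and near-top-grade terms; what the citation buys is brevity and access to the general-$m$ statement of \cite{cen}, which the paper also needs for Lemma \ref{centralizers_qt_ds}. The one caveat: your final step is a plan, not yet a proof --- the $(i,j)$ enumeration must actually be written out for each of the seven listed sets, since every parity split on $n$ and every boundary case $r=n,\,n-1,\,n-2$ lives there; but nothing conceptual is missing, only that mechanical verification.
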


In this work, we also use the centralizers $\Z^{\overline{k}}_{p,q,r}$ and twisted  centralizers $\check{\Z}^{\overline{k}}_{p,q,r}$ of  the subspaces $\C^{\overline{k}}_{p,q,r}$~(\ref{qtdef}), $k=0,1,2,3$. They are defined as:
\begin{eqnarray}
\Z^{\overline{k}}_{p,q,r}&:=&\{X\in\C_{p,q,r}:\quad X V = V X,\quad \forall V\in\C^{\overline{k}}_{p,q,r}\},\label{def_CC_ov}
\\
\check{\Z}^{\overline{k}}_{p,q,r}&:=&\{X\in\C_{p,q,r}:\quad \widehat{X} V = V X,\quad \forall V\in\C^{\overline{k}}_{p,q,r}\}.\label{def_chCC_ov}
\end{eqnarray}
The centralizers $\Z^{\overline{kl}}_{p,q,r}$ and twisted centralizers $\check{\Z}^{\overline{kl}}_{p,q,r}$ of the direct sums $\C^{\overline{kl}}_{p,q,r}=\C^{\overline{k}}_{p,q,r}\oplus\C^{\overline{l}}_{p,q,r}$, $k,l=0,1,2,3$, are defined as
\begin{eqnarray}\label{def_sumZ}
\Z^{\overline{kl}}_{p,q,r}:=\Z^{\overline{k}}_{p,q,r}\cap\Z^{\overline{l}}_{p,q,r},\qquad\check{\Z}^{\overline{kl}}_{p,q,r}:=\check{\Z}^{\overline{k}}_{p,q,r}\cap\check{\Z}^{\overline{l}}_{p,q,r}.
\end{eqnarray}

\begin{lem}[\cite{cen}]\label{centralizers_qt_ds}
The centralizers and twisted centralizers of the subspaces $\C^{\overline{m}}_{p,q,r}$ have the form
\begin{eqnarray*}
&\Z^{\overline{m}}_{p,q,r}=\Z^{m}_{p,q,r},\qquad\check{\Z}^{\overline{m}}_{p,q,r}=\check{\Z}^{m}_{p,q,r},\qquad m=1,2,3;
\\
&\Z^{\overline{0}}_{p,q,r}=\Z^{4}_{p,q,r},\qquad \check{\Z}^{\overline{0}}_{p,q,r}=\langle\Z^{4}_{p,q,r}\rangle_{(0)}.\label{f_f3}
\end{eqnarray*}
The centralizers and twisted centralizers of the direct sums $\C^{\overline{kl}}_{p,q,r}$ are
    \begin{eqnarray}
    &\!\!\!\!\!\!\Z^{\overline{01}}_{p,q,r}=\Z^{\overline{12}}_{p,q,r}=\Z^{\overline{13}}_{p,q,r}=\Z_{p,q,r},\quad \Z^{\overline{23}}_{p,q,r}=\Z^2_{p,q,r}\cap\Z^3_{p,q,r},\label{centralizers_qt_ds_1}
    \\
    &\Z^{\overline{02}}_{p,q,r}=\Z^2_{p,q,r},\quad \Z^{\overline{03}}_{p,q,r}=\Z^3_{p,q,r},\label{centralizers_qt_ds_2_00}
    \\
    &\!\!\!\!\!\! \check{\Z}^{\overline{12}}_{p,q,r}=\check{\Z}^1_{p,q,r}\cap\check{\Z}^2_{p,q,r},\quad \check{\Z}^{\overline{23}}_{p,q,r}=\check{\Z}^2_{p,q,r}\cap\check{\Z}^3_{p,q,r},\quad \check{\Z}^{\overline{13}}_{p,q,r}=\check{\Z}^1_{p,q,r}, \label{centralizers_qt_ds_2_0}
    \\
    &\check{\Z}^{\overline{01}}_{p,q,r}=\langle{\Z}^1_{p,q,r}\rangle_{(0)},\quad \check{\Z}^{\overline{02}}_{p,q,r}=\langle{\Z}^2_{p,q,r}\rangle_{(0)},\quad\check{\Z}^{\overline{03}}_{p,q,r}=\langle\Z^3_{p,q,r}\rangle_{(0)}.\label{centralizers_qt_ds_2}
\end{eqnarray}
\end{lem}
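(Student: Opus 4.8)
The plan is to reduce everything to the explicit description of the single-grade centralizers $\Z^m_{p,q,r}$ and $\check{\Z}^m_{p,q,r}$ recorded in Lemma \ref{cases_we_use} (Theorem 3.6 of \cite{cen}), together with the decomposition $\C^{\overline m}_{p,q,r}=\bigoplus_{j\geq0}\C^{m+4j}_{p,q,r}$ dictated by the grade involution and reversion in (\ref{qtdef}). First I would prove the single-type identities $\Z^{\overline m}_{p,q,r}=\Z^m_{p,q,r}$ for $m=1,2,3$ and $\Z^{\overline 0}_{p,q,r}=\Z^4_{p,q,r}$. Since $\C^m_{p,q,r}\subseteq\C^{\overline m}_{p,q,r}$, the inclusion $\Z^{\overline m}_{p,q,r}\subseteq\Z^m_{p,q,r}$ is immediate; for $\overline 0$ the grade-$0$ part is centralized by everything, so $\Z^{\overline 0}_{p,q,r}=\bigcap_{j\geq1}\Z^{4j}_{p,q,r}\subseteq\Z^4_{p,q,r}$. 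The reverse inclusions amount to the period-four nesting $\Z^k_{p,q,r}\subseteq\Z^{k+4}_{p,q,r}$ for every $k$, which I would read off by comparing the explicit direct-sum forms in Lemma \ref{cases_we_use} term by term (and, for $k>4$, from the full statement of Theorem 3.6). Away from the purely degenerate directions this nesting also has a transparent multiplicative explanation: a grade-$(k+4)$ basis element can be written, using an auxiliary non-degenerate generator $e_x$ with $\eta_{xx}\neq0$, as a product of elements of grade $k$ (for instance $e_{abcd}=\pm\eta_{xx}^{-1}e_{abx}e_{xcd}$ in the grade-$3$/grade-$4$ step), so any $X$ commuting with $\C^k_{p,q,r}$ commutes with the product and hence with $\C^{k+4}_{p,q,r}$.

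Next I would treat the twisted centralizers of the single types. For $m=1,2,3$ the identity $\check{\Z}^{\overline m}_{p,q,r}=\check{\Z}^m_{p,q,r}$ follows exactly as above, the higher-grade components being inherited. The formula $\check{\Z}^{\overline0}_{p,q,r}=\langle\Z^4_{p,q,r}\rangle_{(0)}$ is where the twisting really enters: writing $X=\langle X\rangle_{(0)}+\langle X\rangle_{(1)}$ and using that every $V\in\C^{\overline0}_{p,q,r}$ is even, the defining equation $\widehat X V=VX$ splits by parity into $\langle X\rangle_{(0)}V=V\langle X\rangle_{(0)}$ and $-\langle X\rangle_{(1)}V=V\langle X\rangle_{(1)}$. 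Taking $V=e$ forces $\langle X\rangle_{(1)}=0$, so $X$ is even and the first equation says $X\in\Z^{\overline0}_{p,q,r}=\Z^4_{p,q,r}$; conversely every even element of $\Z^4_{p,q,r}$ clearly satisfies the twisted condition. This yields $\check{\Z}^{\overline0}_{p,q,r}=\langle\Z^4_{p,q,r}\rangle_{(0)}$.

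Finally I would derive the direct-sum formulas from the definitions (\ref{def_sumZ}) by substituting the single-type results and simplifying with the centralizer inclusions. The key facts are that the center $\Z_{p,q,r}=\Z^1_{p,q,r}$ is contained in every $\Z^m_{p,q,r}$ (it commutes with the whole algebra) and that $\Z^2_{p,q,r},\Z^3_{p,q,r}\subseteq\Z^4_{p,q,r}$ by the nesting above. These collapse $\Z^{\overline{01}}_{p,q,r}=\Z^4_{p,q,r}\cap\Z_{p,q,r}$, $\Z^{\overline{12}}_{p,q,r}$, and $\Z^{\overline{13}}_{p,q,r}$ to $\Z_{p,q,r}$, and give $\Z^{\overline{02}}_{p,q,r}=\Z^4_{p,q,r}\cap\Z^2_{p,q,r}=\Z^2_{p,q,r}$ and $\Z^{\overline{03}}_{p,q,r}=\Z^3_{p,q,r}$; the sums $\Z^{\overline{23}}_{p,q,r}$, $\check{\Z}^{\overline{12}}_{p,q,r}$, $\check{\Z}^{\overline{23}}_{p,q,r}$ are just the stated intersections, while $\check{\Z}^{\overline{13}}_{p,q,r}=\check{\Z}^1_{p,q,r}$ follows from $\check{\Z}^1_{p,q,r}\subseteq\check{\Z}^3_{p,q,r}$. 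For the twisted sums involving $\overline0$ I would use that on even elements the twisted and ordinary conditions coincide: $\check{\Z}^{\overline{0k}}_{p,q,r}=\langle\Z^4_{p,q,r}\rangle_{(0)}\cap\check{\Z}^k_{p,q,r}=\langle\Z^k_{p,q,r}\rangle_{(0)}$, since an even element lies in $\check{\Z}^k_{p,q,r}$ iff it lies in $\Z^k_{p,q,r}$, and $\langle\Z^k_{p,q,r}\rangle_{(0)}\subseteq\langle\Z^4_{p,q,r}\rangle_{(0)}$ by the nesting.

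I expect the main obstacle to be the reverse inclusion in the first step for the degenerate, and especially the purely Grassmann, signatures: there the multiplicative generating argument breaks down because products of grade-$k$ elements no longer reach all of $\C^{k+4}_{p,q,r}$, so the period-four nesting $\Z^k_{p,q,r}\subseteq\Z^{k+4}_{p,q,r}$ must instead be confirmed by the explicit term-by-term comparison of the direct-sum descriptions coming from Theorem 3.6 of \cite{cen}. Once that nesting is in hand, the twisted and direct-sum parts are purely formal manipulations with parity projections and intersections.
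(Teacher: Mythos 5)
There is nothing in the paper to compare your argument against: the lemma is stated with the attribution [\cite{cen}] and the paper gives no proof of it at all --- it is imported wholesale from that reference, just as Lemma \ref{cases_we_use} is. So your proposal can only be judged on its own merits, and as a reduction it is sound and almost certainly the intended one. The decomposition $\C^{\overline{m}}_{p,q,r}=\bigoplus_{j\geq0}\C^{m+4j}_{p,q,r}$ gives $\Z^{\overline{m}}_{p,q,r}=\bigcap_{j\geq0}\Z^{m+4j}_{p,q,r}$ (and likewise for the twisted case) by linearity; your parity-splitting argument for $\check{\Z}^{\overline{0}}_{p,q,r}=\langle\Z^{4}_{p,q,r}\rangle_{(0)}$ (split $\widehat{X}V=VX$ into even and odd parts, take $V=e$ to kill $\langle X\rangle_{(1)}$) is exactly right; and the passage to the direct sums via the definition (\ref{def_sumZ}) uses only inclusions --- $\Z^1_{p,q,r}\subseteq\Z^m_{p,q,r}$, $\Z^2_{p,q,r},\Z^3_{p,q,r}\subseteq\Z^4_{p,q,r}$, $\check{\Z}^1_{p,q,r}\subseteq\check{\Z}^3_{p,q,r}$, and the fact that an even element is twisted-centralizing iff it is centralizing --- all of which are visible in Lemma \ref{cases_we_use}.

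Two caveats. First, the load-bearing step, the period-four nestings $\Z^k_{p,q,r}\subseteq\Z^{k+4}_{p,q,r}$ and $\check{\Z}^k_{p,q,r}\subseteq\check{\Z}^{k+4}_{p,q,r}$ in all signatures, is by your own admission outsourced to the explicit forms of Theorem 3.6 of \cite{cen} --- the very reference the paper cites for the lemma itself. So what you have is a correct reduction of the lemma to that theorem, not an independent proof of its hard content; that is acceptable here, since the lemma is attributed to \cite{cen} anyway, but you should state it as such. Second, for the twisted single types the phrase ``follows exactly as above'' hides a genuine parity subtlety: if $\widehat{X}V_i=V_iX$ for each factor, then applying the grade involution gives $XV_i=V_i\widehat{X}$, and a short computation shows that a product of an \emph{even} number of such factors is only \emph{ordinarily} commuted by $X$, while an \emph{odd} number of factors reproduces the twisted relation. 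Hence the multiplicative argument for $\check{\Z}^m_{p,q,r}\subseteq\check{\Z}^{m+4j}_{p,q,r}$ must use an odd number of grade-$m$ factors (for instance three grade-$3$ elements sharing one non-degenerate index to reach grade $7$); this works for $m=1,2$ with disjoint factors in any signature, but for $m=3$ in the purely Grassmann case it fails outright (there the claim survives only because $\check{\Z}^3_{0,0,n}=\Lambd_n$ is the whole algebra), so in that case your term-by-term fallback is not just a convenience but the only route.
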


We use the upper index $\times$ to denote the subset $\H^{\times}$ of all invertible elements of any set $\H$. 
Consider the well-known (see, for example, \cite{Abl,ABS,lg1,lounesto}) Clifford and Lipschitz groups, which are defined in the following way respectively:
\begin{eqnarray}
    \Gamm_{p,q,r}&:=&\{T\in\C^{\times}_{p,q,r}:\quad T\C^{1}_{p,q,r}T^{-1}\subseteq\C^{1}_{p,q,r}\},\label{def_cg}
    \\
    \Gamm^{\pm}_{p,q,r}&:=&\{T\in\C^{\times}_{p,q,r}:\quad \widehat{T}\C^{1}_{p,q,r}T^{-1}\subseteq\C^{1}_{p,q,r}\}.\label{def_lg}
\end{eqnarray}
We use the upper index $\pm$ in the notation $\Gamm^{\pm}_{p,q,r}$ to align with its common notation $\Gamm^{\pm}_{p,q,0}$ in the literature (see, for example, \cite{lg1}) in the non-degenerate geometric algebras $\C_{p,q,0}$. In the case $\C_{p,q,0}$, this index indicates that  $\Gamm^{\pm}_{p,q,0}$ consists of even and odd elements and has the following equivalent definition:
\begin{eqnarray}
    \Gamm^{\pm}_{p,q,0}= \{T\in\C^{(0)\times}_{p,q,0}\cup\C^{(1)\times}_{p,q,0}:\quad {T}\C^{1}_{p,q,0}T^{-1}\subseteq\C^{1}_{p,q,0}\}.\label{def_lg_2}
\end{eqnarray}
Consider two norm functions widely used in the theory of spin groups \cite{ABS,lg1,lounesto}:
\begin{eqnarray}\label{norm_functions}
    \psi(T):=\widetilde{T}T,\qquad \chi(T):=\widehat{\widetilde{T}}T,\qquad \forall T\in\C_{p,q,r}.
\end{eqnarray}
For example, in the case of the non-degenerate geometric algebra $\C_{p,q,0}$, the groups  $\Pin_{p,q,0}$ and $\Spin_{p,q,0}$ are defined as \cite{lounesto,p}:
\begin{eqnarray*}
    \Pin_{p,q,0}&:=&\{T\in \Gamm^{\pm}_{p,q,0}:\; \widetilde{T}T=\pm e\}=\{T\in \Gamm^{\pm}_{p,q,0}:\; \widehat{\widetilde{T}}T=\pm e\},\label{pin_ex}
    \\
    \Spin_{p,q,0}&:=&\{T\in \langle\Gamm^{\pm}_{p,q,0}\rangle_{(0)}:\;\widetilde{T}T=\pm e\}=\{T\in \langle\Gamm^{\pm}_{p,q,0}\rangle_{(0)}:\;\widehat{\widetilde{T}}T=\pm e\}.\label{spin_ex}
\end{eqnarray*}
 Note that
\begin{eqnarray}\label{norms_01_03}
    \widetilde{T}T\in\C^{\overline{01}}_{p,q,r},\qquad \widehat{\widetilde{T}}T\in\C^{\overline{03}}_{p,q,r},\qquad \forall T\in\C_{p,q,r}.
\end{eqnarray}
The proof of (\ref{norms_01_03}) in the case of the degenerate geometric algebra $\C_{p,q,r}$ repeats the proof of this statement in the special case of the non-degenerate algebra $\C_{p,q,0}$ given in \cite{OnInner} (see Lemmas 4.1 and 5.1). 

Let us prove Lemma \ref{lemma_for_AB} about such $T\in\C_{p,q,r}$ that $\psi(T)$ and $\chi(T)$
are in the centralizers $\Z^{\overline{m}}_{p,q,r}$ (\ref{def_CC_ov}) and twisted centralizers $\check{\Z}^{\overline{m}}_{p,q,r}$ (\ref{def_chCC_ov}). 
The proofs of Theorems \ref{theorem_AB}--\ref{theorem_tildeQ} in Section \ref{section_ds_qt} are based on this lemma.

\begin{lem}\label{lemma_for_AB}
For any $T\in\C^{\times}_{p,q,r}$, in the cases $(k,l)=(0,1),(1,0),(2,3),(3,2)$, we have:
\begin{eqnarray}
\!\!\!\!\!\!\!\!T\C^{\overline{k}}_{p,q,r} T^{-1}\subseteq \C^{\overline{kl}}_{p,q,r}\quad \Leftrightarrow\quad\widetilde{T}T\in\Z^{\overline{k}\times}_{p,q,r},\label{1_for_AB_0}
\\
 \widehat{T} \C^{\overline{k}}_{p,q,r} T^{-1}\subseteq \C^{\overline{kl}}_{p,q,r}\quad \Leftrightarrow\quad\widehat{\widetilde{T}}T\in\check{\Z}^{\overline{k}\times}_{p,q,r},\label{1_for_AB}
 \end{eqnarray}
and in the cases $(k,l)=(0,3),(3,0),(1,2),(2,1)$, we have:
 \begin{eqnarray}
  \!\!\!\!\!\!\!\!T\C^{\overline{k}}_{p,q,r} T^{-1}\subseteq \C^{\overline{kl}}_{p,q,r}\quad\Leftrightarrow\quad\widehat{\widetilde{T}}T\in\Z^{\overline{k}\times}_{p,q,r},\label{2_for_AB_0}
  \\
 \widehat{T} \C^{\overline{k}}_{p,q,r} T^{-1}\subseteq \C^{\overline{kl}}_{p,q,r}\quad\Leftrightarrow\quad \widetilde{T}T\in\check{\Z}^{\overline{k}\times}_{p,q,r}.\label{2_for_AB}
\end{eqnarray}
\end{lem}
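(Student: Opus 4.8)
The plan is to reduce all eight biconditionals to a single mechanism by first identifying the eigenspace description of each target subspace $\C^{\overline{kl}}_{p,q,r}$. Using the decomposition $\C_{p,q,r}=\C^{\overline 0}_{p,q,r}\oplus\C^{\overline 1}_{p,q,r}\oplus\C^{\overline 2}_{p,q,r}\oplus\C^{\overline 3}_{p,q,r}$ together with the signs in (\ref{qtdef}), I would observe that $\C^{\overline{01}}_{p,q,r}$ and $\C^{\overline{23}}_{p,q,r}$ are precisely the $+1$ and $-1$ eigenspaces of the reversion, while $\C^{\overline{03}}_{p,q,r}$ and $\C^{\overline{12}}_{p,q,r}$ are the $+1$ and $-1$ eigenspaces of the Clifford conjugation $\widehat{\widetilde{\cdot}}$. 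In each of the eight cases $\C^{\overline k}_{p,q,r}\subseteq\C^{\overline{kl}}_{p,q,r}$, and the relevant anti-involution acts on $\C^{\overline k}_{p,q,r}$ by a definite scalar: $\widetilde V=(-1)^{k(k-1)/2}V$ in the cases (\ref{1_for_AB_0}), (\ref{1_for_AB}), and $\widehat{\widetilde V}=(-1)^{k}(-1)^{k(k-1)/2}V$ in the cases (\ref{2_for_AB_0}), (\ref{2_for_AB}). Hence $W\in\C^{\overline{kl}}_{p,q,r}$ is equivalent to $W$ being fixed by that involution up to this same scalar.

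Next I would translate each inclusion into an invariance statement and collapse it. For (\ref{1_for_AB_0}), take $V\in\C^{\overline k}_{p,q,r}$ and compute, using that reversion is an anti-automorphism with $\widetilde{T^{-1}}=(\widetilde T)^{-1}$,
\[
\widetilde{TVT^{-1}}=(\widetilde T)^{-1}\widetilde V\,\widetilde T=(-1)^{\frac{k(k-1)}2}(\widetilde T)^{-1}V\widetilde T .
\]
The inclusion $TVT^{-1}\in\C^{\overline{kl}}_{p,q,r}$ says exactly that this equals $(-1)^{k(k-1)/2}TVT^{-1}$; the common sign cancels, and multiplying by $\widetilde T$ on the left and $T$ on the right turns the identity $(\widetilde T)^{-1}V\widetilde T=TVT^{-1}$ into $V(\widetilde TT)=(\widetilde TT)V$. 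Since this must hold for all $V\in\C^{\overline k}_{p,q,r}$, it is precisely $\psi(T)=\widetilde TT\in\Z^{\overline k}_{p,q,r}$ (invertibility being automatic from $T\in\C^\times_{p,q,r}$), and the steps are reversible. The cases (\ref{2_for_AB_0}) are identical with Clifford conjugation in place of reversion: since $\widehat{\widetilde{\cdot}}$ is also an anti-automorphism, $\widehat{\widetilde{TVT^{-1}}}=(\widehat{\widetilde T})^{-1}\widehat{\widetilde V}\,\widehat{\widetilde T}$, and the same cancellation yields $V\chi(T)=\chi(T)V$, i.e. $\chi(T)\in\Z^{\overline k}_{p,q,r}$.

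For the twisted-adjoint statements (\ref{1_for_AB}) and (\ref{2_for_AB}) the only extra ingredient is the behaviour of the hat. I would use that grade involution is an automorphism commuting with reversion, so that $\widehat{\psi(T)}=\widehat{\widetilde T}\,\widehat T$, $\widehat{\chi(T)}=\widetilde T\,\widehat T$, and $\widehat{\widetilde{\widehat T}}=\widetilde T$. Applying reversion (resp.\ Clifford conjugation) to $\widehat T V T^{-1}$ and cancelling the scalar as before leads, after multiplying on the left by $\widetilde T$ or $\widehat{\widetilde T}$ as appropriate and on the right by $T$, to an identity of the form $VX=\widehat X V$ with $X=\chi(T)$ in case (\ref{1_for_AB}) and $X=\psi(T)$ in case (\ref{2_for_AB}). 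This is exactly the defining relation $\widehat X V=VX$ of the twisted centralizer (\ref{def_chCC_ov}), so it reads $\chi(T)\in\check\Z^{\overline k}_{p,q,r}$ and $\psi(T)\in\check\Z^{\overline k}_{p,q,r}$ respectively, and reversing the computation gives the converse.

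The computations are routine; the real work, and the step most prone to error, is the bookkeeping of the anti-automorphism reversal and of the cancellation $\widehat{\widetilde{\widehat T}}=\widetilde T$, which is what makes the twisted adjoint produce the \emph{twisted} centralizer while interchanging the roles of $\psi$ and $\chi$. One should also confirm the eigenspace characterization of $\C^{\overline{kl}}_{p,q,r}$ from the quaternion-type decomposition, after which the uniform sign-cancellation argument applies verbatim to all eight cases. As a consistency check, (\ref{norms_01_03}) already records that $\psi(T)\in\C^{\overline{01}}_{p,q,r}$ and $\chi(T)\in\C^{\overline{03}}_{p,q,r}$ for every $T$, which is compatible with the sharper centralizer membership obtained here under the hypotheses.
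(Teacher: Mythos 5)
Your proposal is correct and follows essentially the same route as the paper's proof: both reduce each inclusion to the statement that the conjugated element satisfies the eigenvalue equation for reversion (cases (\ref{1_for_AB_0}), (\ref{1_for_AB})) or Clifford conjugation (cases (\ref{2_for_AB_0}), (\ref{2_for_AB})), expand via the anti-automorphism property, cancel the common sign coming from $\C^{\overline{k}}_{p,q,r}\subseteq\C^{\overline{kl}}_{p,q,r}$, and multiply by $\widetilde{T}$ (resp.\ $\widehat{\widetilde{T}}$) on the left and $T$ on the right to land on the centralizer condition, with the identity $\widetilde{T}\widehat{T}=\widehat{\widehat{\widetilde{T}}T}$ producing the twisted relation $\widehat{X}V=VX$ exactly as in the paper. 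The only difference is presentational: you make the eigenspace characterization of $\C^{\overline{kl}}_{p,q,r}$ explicit and run a single reversible chain of equivalences, whereas the paper proves the two implications by separate computations (its converse substitutes $\widetilde{T}=VT^{-1}$, resp.\ $\widehat{\widetilde{T}}=VT^{-1}$, and uses the commutation or twisted-commutation property of $V$), which amounts to the same mechanism.
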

\begin{proof}
Let us prove  (\ref{1_for_AB_0}) and (\ref{2_for_AB_0}). Suppose $T\in\C^{\times}_{p,q,r}$ satisfies $ T\C^{\overline{k}}_{p,q,r} T^{-1}\subseteq \C^{\overline{kl}}_{p,q,r}$. Consider  $(k,l)=(0,1),(1,0),(2,3),(3,2)$. We have
\begin{eqnarray}
T U_{\overline{k}} T^{-1} = (T \widetilde{U_{\overline{k}}} T^{-1}){\widetilde{\;\;}}=\widetilde{T^{-1}}U_{\overline{k}}\widetilde{T},\qquad \forall U_{\overline{k}}\in\C^{\overline{k}}_{p,q,r}.\label{TUT_2}
\end{eqnarray}
Multiplying both sides of the equation (\ref{TUT_2}) on the left by $\widetilde{T}$, on the right by $T$, we get $(\widetilde{T}T)U_{\overline{k}}=U_{\overline{k}}(\widetilde{T}T)$ for any $U_{\overline{k}}\in\C^{\overline{k}}_{p,q,r}$. Thus, $\widetilde{T}T\in\Z^{\overline{k}\times}_{p,q,r}$. In the cases $(k,l)=(0,3),(3,0),(1,2),(2,1)$, we obtain
\begin{eqnarray}
    T U_{\overline{k}} T^{-1}=(T \widehat{\widetilde{U_{\overline{k}}}} T^{-1}){\widehat{\widetilde{\;\;}}}=\widehat{\widetilde{T^{-1}}}U_{\overline{k}}\widehat{\widetilde{T}},\qquad \forall U_{\overline{k}}\in\C^{\overline{k}}_{p,q,r}.\label{TTU=UTT_2}
\end{eqnarray}
Multiplying both sides of the equation (\ref{TTU=UTT_2}) on the left by $\widehat{\widetilde{T}}$, on the right by $T$, we get
$(\widehat{\widetilde{T}}T)U_{\overline{k}}=U_{\overline{k}}(\widehat{\widetilde{T}}T)$ for any $U_{\overline{k}}\in\C^{\overline{k}}_{p,q,r}$; therefore, $\widehat{\widetilde{T}}T\in\Z^{\overline{k}\times}_{p,q,r}$.

Suppose $T\in\C^{\times}_{p,q,r}$ satisfies $\widetilde{T}T=V\in\Z^{\overline{k}\times}_{p,q,r}$; then $\widetilde{T}=V T^{-1}$ and $\widetilde{T^{-1}}=T V^{-1}$. For any $U_{\overline{k}}\in\C^{\overline{k}}_{p,q,r}$, 
\begin{eqnarray*}
({T} U_{\overline{k}} T^{-1}){\widetilde{\;\;}}=
{\widetilde{T^{-1}}}\widetilde{U_{\overline{k}}}\widetilde{T}=
(T V^{-1})\widetilde{U_{\overline{k}}}(V T^{-1})=T V^{-1} V  \widetilde{U_{\overline{k}}} T^{-1} = T \widetilde{U_{\overline{k}}}T^{-1}.\label{TUT_1}
\end{eqnarray*}
Therefore, ${T} U_{\overline{k}} T^{-1}\in\C^{\overline{01}}_{p,q,r}$ if $k=0,1$ and ${T} U_{\overline{k}} T^{-1}\in\C^{\overline{23}}_{p,q,r}$ if $k=2,3$, which completes the proof. Similarly, suppose $T\in\C^{\times}_{p,q,r}$ satisfies $\widehat{\widetilde{T}}T=V\in\Z^{\overline{k}\times}_{p,q,r}$; then  $\widehat{\widetilde{T}}=VT^{-1}$ and $\widehat{\widetilde{T^{-1}}}=TV^{-1}$. For any $U_{\overline{k}}\in\C^{\overline{k}}_{p,q,r}$, we have
\begin{eqnarray*}
(T U_{\overline{k}} T^{-1}){\widehat{\widetilde{\;\;}}}=\widehat{\widetilde{T^{-1}}} \widehat{\widetilde{U_{\overline{k}}}} \widehat{\widetilde{T}}=(T V^{-1})\widehat{\widetilde{U_{\overline{k}}}} (V T^{-1}) = T V^{-1} V \widehat{\widetilde{U_{\overline{k}}}} T^{-1} = T \widehat{\widetilde{U_{\overline{k}}}} T^{-1}.\label{TUT_3}
\end{eqnarray*}
Thus, $T U_{\overline{k}} T^{-1}\in\C^{\overline{03}}_{p,q,r}$ if $k=0,3$ and $T U_{\overline{k}} T^{-1}\in\C^{\overline{12}}_{p,q,r}$  if $k=1,2$, and the proof is completed.

Now we prove (\ref{1_for_AB}) and (\ref{2_for_AB}).  Suppose $T\in\C^{\times}_{p,q,r}$ satisfies $\widehat{T}\C^{\overline{k}}_{p,q,r}T^{-1}\subseteq\C^{\overline{kl}}_{p,q,r}$. For any $U_{\overline{k}}\in\C^{\overline{k}}_{p,q,r}$, we have
\begin{eqnarray}
&&\!\!\!\!\!\!\!\!\!\!\!\!\!\!\!\!\!\!\!\!\!\!\!\!\!\widehat{T}U_{\overline{k}} T^{-1}=(\widehat{T}\widetilde{U_{\overline{k}}} T^{-1})\widetilde{\;\;}=\widetilde{T^{-1}} U_{\overline{k}} \widehat{\widetilde{T}},\;\; (k,l)=(0,1),(1,0),(2,3),(3,2),\label{Tut01_}
\\
&&\!\!\!\!\!\!\!\!\!\!\!\!\!\!\!\!\!\!\!\!\!\!\!\!\!\widehat{T}U_{\overline{k}} T^{-1}=(\widehat{T}\widehat{\widetilde{U_{\overline{k}}}} T^{-1}){\widehat{\widetilde{\;\;}}}=\widehat{\widetilde{T^{-1}}} U_{\overline{k}}\widetilde{T},\;\; (k,l)=(0,3),(3,0),(1,2),(2,1).\label{TuTov12_r}
\end{eqnarray} 
Multiplying both sides of the equation  (\ref{Tut01_}) on the left by  $\widetilde{T}$, on the right by $T$,  we get $\widehat{(\widehat{\widetilde{T}}T)}U_{\overline{kl}}=U_{\overline{kl}}(\widehat{\widetilde{T}}T)$ and $\widehat{\widetilde{T}}T\in\check{\Z}^{\overline{k}\times}_{p,q,r}$.
Multiplying both sides of the equation (\ref{TuTov12_r}) on the left by $\widehat{\widetilde{T}}$, on the right by $T$, we obtain 
$\widehat{(\widetilde{T}T)}U_{\overline{k}}=U_{\overline{k}}(\widetilde{T}T)$; therefore, $\widetilde{T}T\in\check{\Z}^{\overline{k}\times}_{p,q,r}$.

Suppose $T\in\C^{\times}_{p,q,r}$ satisfies $\widehat{\widetilde{T}}T=V\in\check{\Z}^{\overline{k}\times}_{p,q,r}$. For any $U_{\overline{k}}\in\C^{\overline{k}}_{p,q,r}$,
\begin{eqnarray*}
(\widehat{T} U_{\overline{k}} T^{-1}){\widetilde{\;\;}}=
{\widetilde{T^{-1}}}\widetilde{U_{\overline{k}}}\widehat{\widetilde{T}}=\widehat{(T V^{-1})} \widetilde{U_{\overline{k}}} (V T^{-1})=\widehat{T} \widehat{V^{-1}} \widehat{V} \widetilde{U_{\overline{k}}} T^{-1}=\widehat{T} \widetilde{U_{\overline{k}}} T^{-1};\label{br_2}
\end{eqnarray*}
therefore, $\widehat{T} U_{\overline{k}} T^{-1}\in\C^{\overline{01}}_{p,q,r}$ if $k=0,1$ and $\widehat{T} U_{\overline{k}} T^{-1}\in\C^{\overline{23}}_{p,q,r}$ if $k=2,3$.
Now suppose that $T\in\C^{\times}_{p,q,r}$ satisfies $\widetilde{T}T=V\in\check{\Z}^{\overline{k}\times}_{p,q,r}$; then for any $U_{\overline{k}}\in\C^{\overline{k}}_{p,q,r}$, we have
\begin{eqnarray*}
(\widehat{T} U_{\overline{k}} T^{-1})\widehat{\widetilde{\;\;}}=
\widehat{\widetilde{T^{-1}}}\widehat{\widetilde{U_{\overline{k}}}}\widetilde{T}=
\widehat{(T V^{-1})}\widehat{\widetilde{U_{\overline{k}}}}(V T^{-1})= \widehat{T}\widehat{V^{-1}} \widehat{V} \widehat{\widetilde{U_{\overline{k}}}}T^{-1}=\widehat{T}\widehat{\widetilde{U_{\overline{k}}}}T^{-1}.\label{tut_00n__2}
\end{eqnarray*}
Thus, $\widehat{T} U_{\overline{k}} T^{-1}\in\C^{\overline{03}}_{p,q,r}$ if $k=0,3$ and $\widehat{T} U_{\overline{k}} T^{-1}\in\C^{\overline{12}}_{p,q,r}$ if $k=1,2$, and the proof is completed. 
\end{proof}

\section{The groups preserving the direct sums of the subspaces determined by the grade involution and reversion under $\ad$, $\check{\ad}$, and $\tilde{\ad}$}\label{section_ds_qt}
Let us consider the adjoint representation $\ad$ acting on the group of all invertible elements $\ad:\C^{\times}_{p,q,r}\rightarrow\Aut(\C_{p,q,r})$ as $T\mapsto\ad_T$, where $\ad_{T}:\C_{p,q,r}\rightarrow\C_{p,q,r}$:
\begin{eqnarray}\label{ar}
\ad_{T}(U):=TU T^{-1},\qquad U\in\C_{p,q,r},\qquad T\in\C^{\times}_{p,q,r}.
\end{eqnarray}
Also consider the (two different) twisted adjoint representations $\check{\ad}$ and $\tilde{\ad}$ acting on the group of all invertible elements $\check{\ad},\tilde{\ad}:\C^{\times}_{p,q,r}\rightarrow\Aut(\C_{p,q,r})$ as $T\mapsto\check{\ad}_T$, $T\mapsto\tilde{\ad}_T$ respectively with $\check{\ad}_{T},\tilde{\ad}_T:\C_{p,q,r}\rightarrow\C_{p,q,r}$:
\begin{eqnarray}
&\check{\ad}_{T}(U):=\widehat{T}U T^{-1},\quad U\in\C_{p,q,r},\quad T\in\C^{\times}_{p,q,r};\label{twa1}
\\
&\tilde{\ad}_{T}(U):=T\langle U\rangle_{(0)} T^{-1}+\widehat{T} \langle U\rangle_{(1)} T^{-1},\quad U\in\C_{p,q,r},\quad T\in\C^{\times}_{p,q,r}.\label{twa22}
\end{eqnarray}
Note that both of these operations are used in the literature (see the details and the comparison of the approaches, for example, in \cite{OnSomeLie}).
Kernels of these representations have the following form (see, for example, \cite{OnSomeLie}):
\begin{eqnarray}
\!\!\!\!\!\!\!\!\!\!\!\!\!&&\ker{(\ad)}=\{T\in\C^{\times}_{p,q,r}:\; T U T^{-1}=U,\quad \forall U \in\C_{p,q,r}\}=\Z_{p,q,r}^{\times},\label{ker_ad}
    \\
\!\!\!\!\!\!\!\!\!\!\!\!\!&&\ker(\check{\ad})=\{T\in\C^{\times}_{p,q,r}:\;\widehat{T}UT^{-1}=U,\quad \forall U\in\C_{p,q,r}\}=\Lambd^{(0)\times}_r,\label{ker_check_ad}
    \\
\!\!\!\!\!\!\!\!\!\!\!\!\!&&\ker(\tilde{\ad})=\{T\in\C^{\times}_{p,q,r}: \;T \langle U\rangle_{(0)} T^{-1} \!\!+\!\widehat{T}\langle U\rangle_{(1)}T^{-1}\!=\!U,\;\; \forall U\in\C_{p,q,r}\}\!=\!\Lambd^{\times}_r.\label{ker_tilde_ad}\nonumber
\end{eqnarray}

Let us consider setwise stabilizers (see, for example, \cite{alg}) of the  subspaces $\C^{\overline{kl}}_{p,q,r}$ (\ref{qtdef}), $k,l=0,1,2,3$, in the group $\C^{\times}_{p,q,r}$ under the group actions $\ad$, $\check{\ad}$, and $\tilde{\ad}$.
We use the following notation for the groups preserving these subspaces under the adjoint representation $\ad$ (\ref{ar}):
\begin{eqnarray}\label{def_gammakl}
\Gamm^{\overline{kl}}_{p,q,r}:=\{T\in\C^{\times}_{p,q,r}:\quad \ad_{T}(\C^{\overline{kl}}_{p,q,r}):=T\C^{\overline{kl}}_{p,q,r}T^{-1}\subseteq\C^{\overline{kl}}_{p,q,r}\}.
\end{eqnarray}
Note that the groups $\Gamm^{\overline{kl}}_{p,q,r}$ are the normalizers of  $\C^{\overline{kl}}_{p,q,r}$ in $\C^{\times}_{p,q,r}$.
The following notation is used for the stabilizers of $\C^{\overline{kl}}_{p,q,r}$, $k,l=0,1,2,3$, in $\C^{\times}_{p,q,r}$ under the twisted adjoint representations $\check{\ad}$ (\ref{twa1}) and $\tilde{\ad}$ (\ref{twa22}) respectively:
\begin{eqnarray}
\!\!\!\!\!\!\!\!\!\check{\Gamm}^{\overline{kl}}_{p,q,r}&:=&\{T\in\C^{\times}_{p,q,r}:\quad \check{\ad}_{T}(\C^{\overline{kl}}_{p,q,r}):=\widehat{T}\C^{\overline{kl}}_{p,q,r}T^{-1}\subseteq\C^{\overline{kl}}_{p,q,r}\},\label{ch_Gamma_kl}
\\
\!\!\!\!\!\!\!\!\!\tilde{\Gamm}^{\overline{kl}}_{p,q,r}&:=&\{T\in\C^{\times}_{p,q,r}:\quad \tilde{\ad}_{T}(\C^{\overline{kl}}_{p,q,r})\subseteq\C^{\overline{kl}}_{p,q,r}\}.\label{ti_Gamma_kl}
\end{eqnarray}

In Subsections \ref{sectionAB}--\ref{section_tildeAB}, we find equivalent definitions of the groups $\Gamm^{\overline{kl}}_{p,q,r}$, $\check{\Gamm}^{\overline{kl}}_{p,q,r}$, and $\tilde{\Gamm}^{\overline{kl}}_{p,q,r}$ respectively. 
Note that the groups $\Gamm^{\overline{02}}_{p,q,r}$,  $\check{\Gamm}^{\overline{02}}_{p,q,r}$,  $\tilde{\Gamm}^{\overline{02}}_{p,q,r}$, $\Gamm^{\overline{13}}_{p,q,r}$,  $\check{\Gamm}^{\overline{13}}_{p,q,r}$,  and $\tilde{\Gamm}^{\overline{13}}_{p,q,r}$  (preserving the subspaces $\C^{\overline{02}}_{p,q,r}=\C^{(0)}_{p,q,r}$ and $\C^{\overline{13}}_{p,q,r}=\C^{(1)}_{p,q,r}$ under $\ad$, $\check{\ad}$, and $\tilde{\ad}$ respectively) are considered in details in the paper \cite{OnSomeLie}, where they are denoted by $\Gamm^{(0)}_{p,q,r}$, $\check{\Gamm}^{(0)}_{p,q,r}$, $\tilde{\Gamm}^{(0)}_{p,q,r}$, $\Gamm^{(1)}_{p,q,r}$, $\check{\Gamm}^{(1)}_{p,q,r}$, and $\tilde{\Gamm}^{(1)}_{p,q,r}$ respectively.

\subsection{Groups $\A^{\overline{01}}_{p,q,r}$, $\B^{\overline{12}}_{p,q,r}$, $\A^{\overline{23}}_{p,q,r}$, $\B^{\overline{03}}_{p,q,r}$, $\Gamm^{\overline{01}}_{p,q,r}$, $\Gamm^{\overline{12}}_{p,q,r}$, $\Gamm^{\overline{23}}_{p,q,r}$, $\Gamm^{\overline{03}}_{p,q,r}$}\label{sectionAB}

Let us consider the groups $\A^{\overline{01}}_{p,q,r}$ and $\B^{\overline{12}}_{p,q,r}$ with the following definitions:
\begin{eqnarray}
\!\!\!\!\!\!\!\!\!\!\!\!\!\!\A^{\overline{01}}_{p,q,r} \!\!\!\!&:=&\!\!\!\! \{T\in\C^{\times}_{p,q,r}:\; \psi(T)=\widetilde{T}T\in\Z^{1\times}_{p,q,r}=\ker({\ad})\}=\psi^{-1}(\Z^{1\times}_{p,q,r}),\label{def_A01}
\\
\!\!\!\!\!\!\!\!\!\!\!\!\!\!\B^{\overline{12}}_{p,q,r} \!\!\!\!&:=& \!\!\!\!\{T\in\C^{\times}_{p,q,r}:\;\chi(T)=\widehat{\widetilde{T}}T\in\Z^{1\times}_{p,q,r}=\ker({\ad})\}=\chi^{-1}(\Z^{1\times}_{p,q,r}),\label{def_B12}
\end{eqnarray}
where $\ker(\ad)$ (\ref{ker_ad}) is the kernel of the adjoint representation $\ad$ (\ref{ar}). 
Also consider the groups $\A^{\overline{23}}_{p,q,r}$ and $\B^{\overline{03}}_{p,q,r}$ defined as:
\begin{eqnarray}
\A^{\overline{23}}_{p,q,r}&:=&
 \{T\in\C^{\times}_{p,q,r}: \quad\widetilde{T}T\in(\Z^{2}_{p,q,r}\cap\Z^{3}_{p,q,r})^{\times}\}\label{def_A23}
 \\
 &=&\!\psi^{-1}((\Z^{2}_{p,q,r}\cap\Z^{3}_{p,q,r})^{\times}),
\\
\B^{\overline{03}}_{p,q,r}&:=&\{T\in\C^{\times}_{p,q,r}: \quad\widehat{\widetilde{T}}T\in\Z^{3\times}_{p,q,r}\}=\chi^{-1}(\Z^{3\times}_{p,q,r}),\label{def_B03}
\end{eqnarray}
where $\Z^{2}_{p,q,r}$ and $\Z^{3}_{p,q,r}$ are the sets of the elements commuting with all the elements of the subspaces  $\C^{2}_{p,q,r}$ and $\C^{3}_{p,q,r}$  respectively (see Lemma \ref{cases_we_use}) and
\begin{eqnarray}\label{cc2_cap_cc3}
    \Z^{2}_{p,q,r}\cap\Z^{3}_{p,q,r}=
     \left\lbrace
    \begin{array}{lll}
    \Lambd^{(0)}_r\oplus \Lambd^{n-2}_r\oplus\C^{n}_{p,q,r},&&\mbox{$n$ is odd},
    \\
    \Lambd^{(0)}_r\oplus \Lambd^{n-1}_r,&&\mbox{$n$ is even}.
    \end{array}
    \right.
\end{eqnarray}
The groups $\A^{\overline{01}}_{p,q,r}$, $\A^{\overline{23}}_{p,q,r}$, $\B^{\overline{12}}_{p,q,r}$, and $\B^{\overline{03}}_{p,q,r}$ are generalizations of the groups $\A$ and $\B$ \cite{OnInner} respectively in the non-degenerate geometric algebras $\C_{p,q,0}$  to the case of the degenerate geometric algebras $\C_{p,q,r}$ and coincide with them if $r=0$:
    \begin{eqnarray}
        &&\A^{\overline{01}}_{p,q,0}=\A^{\overline{23}}_{p,q,0}=\A=\{T\in\C^{\times}_{p,q,0}:\quad \widetilde{T}T\in\Z^{\times}_{p,q,0}\},\label{AB_pq0_1}
        \\
        &&\B^{\overline{12}}_{p,q,0}=\B^{\overline{03}}_{p,q,0}=\B=\{T\in\C^{\times}_{p,q,0}:\quad \widehat{\widetilde{T}}T\in\Z^{\times}_{p,q,0}\}.\label{AB_pq0_2}
    \end{eqnarray}

\begin{thm}\label{theorem_AB}
In the degenerate and non-degenerate geometric algebras $\C_{p,q,r}$,
\begin{eqnarray}
\!\!\!\!\!\!\!\!\!\!\!\!\!\!\A^{\overline{01}}_{p,q,r}=\psi^{-1}(\Z^{1\times}_{p,q,r})=\Gamm^{\overline{01}}_{p,q,r}\!\!\!\!\!&\subseteq&\!\!\!\!\!\A^{\overline{23}}_{p,q,r}\!=\psi^{-1}((\Z^{2}_{p,q,r}\cap\Z^{3}_{p,q,r})^{\times})\!=\Gamm^{\overline{23}}_{p,q,r},\label{theorem_AB_1}
\\
\!\!\!\!\!\!\!\!\!\!\!\!\!\!\B^{\overline{12}}_{p,q,r}=\chi^{-1}(\Z^{1\times}_{p,q,r})=\Gamm^{\overline{12}}_{p,q,r}\!\!\!\!\!&\subseteq& \!\!\!\!\!\B^{\overline{03}}_{p,q,r}=\chi^{-1}(\Z^{3\times}_{p,q,r})=\Gamm^{\overline{03}}_{p,q,r}.\label{theorem_AB_2}
\end{eqnarray}
\end{thm}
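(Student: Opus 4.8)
The plan is to observe first that the outer equalities $\A^{\overline{01}}_{p,q,r}=\psi^{-1}(\Z^{1\times}_{p,q,r})$, $\A^{\overline{23}}_{p,q,r}=\psi^{-1}((\Z^2_{p,q,r}\cap\Z^3_{p,q,r})^{\times})$, $\B^{\overline{12}}_{p,q,r}=\chi^{-1}(\Z^{1\times}_{p,q,r})$, and $\B^{\overline{03}}_{p,q,r}=\chi^{-1}(\Z^{3\times}_{p,q,r})$ hold by the very definitions (\ref{def_A01}), (\ref{def_A23}), (\ref{def_B12}), (\ref{def_B03}). Hence the genuine content is twofold: to identify each $\Gamm^{\overline{kl}}_{p,q,r}$ with the corresponding $\A$- or $\B$-group, and to establish the two inclusions.

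For the identifications, the key idea is that since $\C^{\overline{kl}}_{p,q,r}=\C^{\overline{k}}_{p,q,r}\oplus\C^{\overline{l}}_{p,q,r}$ and conjugation by $T$ is linear, the stabilizer condition $T\C^{\overline{kl}}_{p,q,r}T^{-1}\subseteq\C^{\overline{kl}}_{p,q,r}$ is equivalent to the conjunction of the two one-sided conditions $T\C^{\overline{k}}_{p,q,r}T^{-1}\subseteq\C^{\overline{kl}}_{p,q,r}$ and $T\C^{\overline{l}}_{p,q,r}T^{-1}\subseteq\C^{\overline{kl}}_{p,q,r}$. For $\Gamm^{\overline{01}}_{p,q,r}$ I would apply (\ref{1_for_AB_0}) with the pairs $(k,l)=(0,1)$ and $(1,0)$ to convert these into $\psi(T)=\widetilde{T}T\in\Z^{\overline{0}\times}_{p,q,r}$ and $\widetilde{T}T\in\Z^{\overline{1}\times}_{p,q,r}$ respectively; since $T$ invertible forces $\widetilde{T}T$ invertible, their conjunction is exactly $\widetilde{T}T\in(\Z^{\overline{0}}_{p,q,r}\cap\Z^{\overline{1}}_{p,q,r})^{\times}=\Z^{\overline{01}\times}_{p,q,r}$, which by Lemma \ref{centralizers_qt_ds} equals $\Z^{1\times}_{p,q,r}$. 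The identification $\Gamm^{\overline{23}}_{p,q,r}=\A^{\overline{23}}_{p,q,r}$ is entirely analogous, using (\ref{1_for_AB_0}) with $(2,3)$ and $(3,2)$ together with $\Z^{\overline{23}}_{p,q,r}=\Z^2_{p,q,r}\cap\Z^3_{p,q,r}$. For the second chain I would run the same argument with the norm $\chi$ in place of $\psi$, invoking (\ref{2_for_AB_0}) with the pairs $(1,2),(2,1)$ for $\Gamm^{\overline{12}}_{p,q,r}$ and $(0,3),(3,0)$ for $\Gamm^{\overline{03}}_{p,q,r}$, and reading off $\Z^{\overline{12}}_{p,q,r}=\Z^1_{p,q,r}$ and $\Z^{\overline{03}}_{p,q,r}=\Z^3_{p,q,r}$ from Lemma \ref{centralizers_qt_ds}.

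The two inclusions then reduce to containments of the target centralizers. From the explicit forms (\ref{Zpqr}) and (\ref{cc2_cap_cc3}) one sees $\Z^1_{p,q,r}=\Z_{p,q,r}\subseteq\Z^2_{p,q,r}\cap\Z^3_{p,q,r}$ (the extra summand $\Lambd^{n-2}_r$ or $\Lambd^{n-1}_r$ only enlarges the space), whence $\Z^{1\times}_{p,q,r}\subseteq(\Z^2_{p,q,r}\cap\Z^3_{p,q,r})^{\times}$ and applying $\psi^{-1}$ gives $\A^{\overline{01}}_{p,q,r}\subseteq\A^{\overline{23}}_{p,q,r}$; likewise $\Z_{p,q,r}\subseteq\Z^3_{p,q,r}$ by (\ref{cc_3}), so $\chi^{-1}(\Z^{1\times}_{p,q,r})\subseteq\chi^{-1}(\Z^{3\times}_{p,q,r})$ yields $\B^{\overline{12}}_{p,q,r}\subseteq\B^{\overline{03}}_{p,q,r}$. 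I expect the main obstacle to be purely bookkeeping: correctly pairing each summand $\C^{\overline{k}}_{p,q,r}$ with the admissible index list in Lemma \ref{lemma_for_AB} (so that the right norm, $\psi$ or $\chi$, appears), and confirming that the centralizers of the direct sums collapse to $\Z^1_{p,q,r}$, $\Z^2_{p,q,r}\cap\Z^3_{p,q,r}$, and $\Z^3_{p,q,r}$ as claimed; no hard estimate or new construction is needed.
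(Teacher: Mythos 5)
Your proposal is correct and follows essentially the same route as the paper's own proof: decompose the stabilizer condition for $\Gamm^{\overline{kl}}_{p,q,r}$ into the two one-sided conditions, convert each via Lemma \ref{lemma_for_AB} (parts (\ref{1_for_AB_0}) and (\ref{2_for_AB_0})) into a condition on $\psi(T)$ or $\chi(T)$, and collapse the resulting intersections of centralizers using Lemma \ref{centralizers_qt_ds}, with the inclusions following from $\Z^1_{p,q,r}\subseteq\Z^2_{p,q,r}\cap\Z^3_{p,q,r}$ and $\Z^1_{p,q,r}\subseteq\Z^3_{p,q,r}$. In fact your citations are slightly more careful than the paper's, whose proof text refers to (\ref{1_for_AB}) and (\ref{2_for_AB}) where the statements actually invoked are (\ref{1_for_AB_0}) and (\ref{2_for_AB_0}).
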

\begin{proof}
The inclusions $\A^{\overline{01}}_{p,q,r}\subseteq\A^{\overline{23}}_{p,q,r}$ and $\B^{\overline{12}}_{p,q,r}\subseteq\B^{\overline{03}}_{p,q,r}$ follow from the definitions (\ref{def_A01})--(\ref{def_B03}).
Let us prove the equalities in (\ref{theorem_AB_1}) and (\ref{theorem_AB_2}). We have
\begin{eqnarray}
    \!\!\Gamm^{\overline{kl}}_{p,q,r}=\{T\in\C^{\times}_{p,q,r}:\; T\C^{\overline{k}}_{p,q,r}T^{-1}\subseteq\C^{\overline{kl}}_{p,q,r},\; T\C^{\overline{l}}_{p,q,r}T^{-1}\subseteq\C^{\overline{kl}}_{p,q,r}\}.\label{gkl_1}
\end{eqnarray}
Consider the case $(k,l)=(0,1)$ or $(2,3)$. By the statement (\ref{1_for_AB}) of Lemma \ref{lemma_for_AB}, we have
$\{T\in\C^{\times}_{p,q,r}:\;T\C^{\overline{k}}_{p,q,r}T^{-1}\subseteq\C^{\overline{kl}}_{p,q,r}\}=\{T\in\C^{\times}_{p,q,r}:\; \widetilde{T}T\in\Z^{\overline{k}\times}_{p,q,r}\}$,
therefore, 
\begin{eqnarray*}
    \Gamm^{\overline{kl}}_{p,q,r}\!=\!\{T\in\C^{\times}_{p,q,r}\!: \widetilde{T}T\in\Z^{\overline{k}\times}_{p,q,r},\widetilde{T}T\in\Z^{\overline{l}\times}_{p,q,r}\}\!=\!\{T\in\C^{\times}_{p,q,r}\!:\widetilde{T}T\in\Z^{\overline{kl}\times}_{p,q,r}\}.
\end{eqnarray*}
Thus, 
\begin{eqnarray*}
    \!\!\!\!\!\!\!&&\Gamm^{\overline{01}}_{p,q,r}=\{T\in\C^{\times}_{p,q,r}:\;\; \widetilde{T}T\in\Z^{\overline{01}\times}_{p,q,r}=\Z^{1\times}_{p,q,r}\}=\A^{\overline{01}}_{p,q,r},
    \\
    \!\!\!\!\!\!\!&&\Gamm^{\overline{23}}_{p,q,r}=\{T\in\C^{\times}_{p,q,r}:\;\;\widetilde{T}T\in\Z^{\overline{23}\times}_{p,q,r}=(\Z^{2}_{p,q,r}\cap\Z^3_{p,q,r})^{\times}\}=\A^{\overline{23}}_{p,q,r},
\end{eqnarray*}
where we use the formula (\ref{centralizers_qt_ds_1}) of Lemma \ref{centralizers_qt_ds}, and the proof is completed. It remains to consider the case $(k,l)=(1,2)$ or $(0,3)$. By the statement (\ref{2_for_AB}) of Lemma \ref{lemma_for_AB}, 
$\{T\in\C^{\times}_{p,q,r}:\; T\C^{\overline{k}}_{p,q,r}T^{-1}\subseteq\C^{\overline{kl}}_{p,q,r}\}=\{T\in\C^{\times}_{p,q,r}:\; \widehat{\widetilde{T}}T\in\Z^{\overline{k}}_{p,q,r}\}$,
so, from the equality (\ref{gkl_1}), we get $\Gamm^{\overline{kl}}_{p,q,r}=\{T\in\C^{\times}_{p,q,r}:\; \widehat{\widetilde{T}}T\in\Z^{\overline{kl}}_{p,q,r}\}$. Thus, using Lemma~\ref{centralizers_qt_ds}, we get
\begin{eqnarray*}
    &&\Gamm^{\overline{12}}_{p,q,r}=\{T\in\C^{\times}_{p,q,r}:\quad\widehat{\widetilde{T}}T\in\Z^{\overline{12}\times}_{p,q,r}=\Z^{1\times}_{p,q,r}\}=\B^{\overline{12}}_{p,q,r},
    \\
    &&\Gamm^{\overline{03}}_{p,q,r}=\{T\in\C^{\times}_{p,q,r}:\quad\widehat{\widetilde{T}}T\in\Z^{\overline{03}\times}_{p,q,r}=\Z^{3\times}_{p,q,r}\}=\B^{\overline{03}}_{p,q,r},
\end{eqnarray*}
and the proof is completed.
\end{proof}

\subsection{Groups $\check{\A}^{\overline{12}}_{p,q,r}$, $\check{\B}^{\overline{01}}_{p,q,r}$, $\check{\A}^{\overline{03}}_{p,q,r}$, $\check{\B}^{\overline{23}}_{p,q,r}$, $\check{\Gamm}^{\overline{01}}_{p,q,r}$, $\check{\Gamm}^{\overline{12}}_{p,q,r}$, $\check{\Gamm}^{\overline{23}}_{p,q,r}$, $\check{\Gamm}^{\overline{03}}_{p,q,r}$}\label{section_checkAB}

Consider the groups $\check{\A}^{\overline{12}}_{p,q,r}$, $\check{\A}^{\overline{03}}_{p,q,r}$, $\check{\B}^{\overline{01}}_{p,q,r}$, and  $\check{\B}^{\overline{23}}_{p,q,r}$ defined as follows:
\begin{eqnarray}
\!\!\!\!\!\!\!\!\!\!\!\!\!\!\!\check{\A}^{\overline{12}}_{p,q,r}&:=&\{T\in\C^{\times}_{p,q,r}:\quad\widetilde{T}T\in(\check{\Z}^{1}_{p,q,r}\cap\check{\Z}^{2}_{p,q,r})^{\times}\} \label{def_chA12}
\\
\!\!\!\!\!\!\!\!\!\!\!\!\!\!\!&=& \psi^{-1}((\check{\Z}^{1}_{p,q,r}\cap\check{\Z}^{2}_{p,q,r})^{\times}),
\\
\!\!\!\!\!\!\!\!\!\!\!\!\!\!\!\check{\A}^{\overline{03}}_{p,q,r}&:=&\{T\in\C^{\times}_{p,q,r}:\quad\widetilde{T}T\in(\Z^{3}_{p,q,r}\cap\C^{(0)}_{p,q,r})^{\times}\}\label{def_chA03}
\\
\!\!\!\!\!\!\!\!\!\!\!\!\!\!\!&=& \psi^{-1}((\Z^{3}_{p,q,r}\cap\C^{(0)}_{p,q,r})^{\times}),
\\
\!\!\!\!\!\!\!\!\!\!\!\!\!\!\!\check{\B}^{\overline{01}}_{p,q,r}&:=& \{T\in\C^{\times}_{p,q,r}\!: \quad\widehat{\widetilde{T}}T\!\in\!(\Z^1_{p,q,r}\cap\C^{(0)}_{p,q,r})^{\times}\!=\ker(\check{\ad})\!=\!\Lambd^{(0)\times}_r\!\} \label{def_chB01}
\\
\!\!\!\!\!\!\!\!\!\!\!\!\!\!\!&=&\chi^{-1}(\Lambd^{(0)\times}_r),
\\
\!\!\!\!\!\!\!\!\!\!\!\!\!\!\!\check{\B}^{\overline{23}}_{p,q,r}&:=&\{T\in\C^{\times}_{p,q,r}: \quad\widehat{\widetilde{T}}T\in(\check{\Z}^2_{p,q,r}\cap\check{\Z}^3_{p,q,r})^{\times}\} \label{def_chB23}
\\
\!\!\!\!\!\!\!\!\!\!\!\!\!\!\!&=& \chi^{-1}((\check{\Z}^2_{p,q,r}\cap\check{\Z}^3_{p,q,r})^{\times}),\label{def_chB23_}
\end{eqnarray}
where $\ker{(\check{\ad})}$ (\ref{ker_check_ad}) is the kernel of the twisted adjoint representation $\check{\ad}$ (\ref{twa1}),  the sets $\check{\Z}^{1}_{p,q,r}$, $\check{\Z}^{2}_{p,q,r}$ (\ref{ch_cc_2}), and $\check{\Z}^{3}_{p,q,r}$ (\ref{ch_cc_1}) are twisted centralizers of the subspaces $\C^{1}_{p,q,r}$, $\C^{2}_{p,q,r}$, and $\C^{3}_{p,q,r}$ respectively, and $\Z^1_{p,q,r}$ and $\Z^{3}_{p,q,r}$  are centralizers of the subspaces $\C^{1}_{p,q,r}$ and $\C^{3}_{p,q,r}$ respectively (see Lemma \ref{cases_we_use}). We have
\begin{eqnarray}
\!\!\!\!\!\!\!\!\!\!\!\!\!\!\!\!\!\!\!\!&&\check{\Z}^1_{p,q,r}\cap\check{\Z}^2_{p,q,r}=
\left\lbrace
    \begin{array}{lll}\label{chCC1_chCC2}
    \Lambd^{(0)}_r\oplus\Lambd^{n}_r,&&\mbox{$n$ is odd},
    \\
    \Lambd^{(0)}_r\oplus\Lambd^{n-1}_r,&&\mbox{$n$ is even};
    \end{array}
    \right.
    \\
\!\!\!\!\!\!\!\!\!\!\!\!\!\!\!\!\!\!\!\!&&\Z^3_{p,q,r}\cap\C^{(0)}_{p,q,r}=
\left\lbrace
            \begin{array}{lll}
            \Lambd^{(0)}_r\oplus \{\C^{1}_{p,q,0}\Lambd^{n-2}_r\} \oplus  \{\C^{2}_{p,q,0}\Lambd^{n-3}_r\}, \!\!\!\!\!&&\!\!\!\mbox{$n$ is odd},
            \\
            \Lambd^{(0)}_r\oplus \{\C^{1}_{p,q,0}\Lambd^{n-1}_r\}\oplus \{\C^{2}_{p,q,0}\Lambd^{n-2}_r\}, \!\!\!\!\!&&\!\!\!\mbox{$n$ is even};
            \end{array}
            \right.\label{chCC1_chCC3}
    \\
\!\!\!\!\!\!\!\!\!\!\!\!\!\!\!\!\!\!\!\!&&\check{\Z}^2_{p,q,r}\cap\check{\Z}^3_{p,q,r}=\left\lbrace
            \begin{array}{lll}
            \Lambd^{(0)}_r\oplus\Lambd^n_r\oplus \{\C^{1}_{p,q,0}\Lambd^{n-1}_r\},\!\!\!\!\!\!\!\!\!\!\!\!\!\!\!\!\!\!\!\!\!\!\!\!\!&&\!\!\!\!\!\!\!\!\!\!\mbox{$n$ is odd},
            \\
            \Lambd^{(0)}_r\oplus\Lambd^{n-1}_r\oplus\{\C^{1}_{p,q,0}(\Lambd^{n-2}_r\oplus\Lambd^{n-1}_r)\}
            \\
            \quad\oplus \{\C^{2}_{p,q,0}\Lambd^{n-2}_r\}, \!\!\!\!\!\!\!\!\!\!\!\!\!\!\!\!\!\!\!\!\!\!\!\!\!&&\!\!\!\!\!\!\!\!\!\!\mbox{$n$ is even}.
            \end{array}
            \right.\label{ch_cc2_ch_cc3}
\end{eqnarray}
 
The groups $\check{\A}^{\overline{12}}_{p,q,r}$, $\check{\A}^{\overline{03}}_{p,q,r}$, $\check{\B}^{\overline{01}}_{p,q,r}$, and $\check{\B}^{\overline{23}}_{p,q,r}$ generalize the groups $\A_{\pm}$ and $\B_{\pm}$ \cite{GenSpin}  respectively in the case of the non-degenerate Clifford algebras $\C_{p,q,0}$ to the case of the degenerate Clifford algebras $\C_{p,q,r}$ and coincide with them if $r=0$:
    \begin{eqnarray}
    &&\check{\A}^{\overline{12}}_{p,q,0}=\check{\A}^{\overline{03}}_{p,q,0}=\A_{\pm}=\{T\in\C^{\times}_{p,q,0}:\quad \widetilde{T}T\in\C^{0\times}\},\label{chAB_pq0_1}
    \\
    &&\check{\B}^{\overline{01}}_{p,q,0}=\check{\B}^{\overline{23}}_{p,q,0}=\B_{\pm}=\{T\in\C^{\times}_{p,q,0}:\quad \widehat{\widetilde{T}}T\in\C^{0\times}\}.\label{chAB_pq0_2}
    \end{eqnarray}

\begin{thm}\label{theorem_chAB}
In arbitrary $\C_{p,q,r}$, we have
\begin{eqnarray}
\!\!\!\!\!\!\!\!\!\!\!\!\!\!&\check{\A}^{\overline{12}}_{p,q,r}=\psi^{-1}((\check{\Z}^{1}_{p,q,r}\cap\check{\Z}^{2}_{p,q,r})^{\times})=\check{\Gamm}^{\overline{12}}_{p,q,r},\label{chAB_0}
\\
\!\!\!\!\!\!\!\!\!\!\!\!\!\!&\check{\A}^{\overline{03}}_{p,q,r}=\psi^{-1}((\Z^{3}_{p,q,r}\cap\C^{(0)}_{p,q,r})^{\times})=\check{\Gamm}^{\overline{03}}_{p,q,r},\label{chAB_1}
\\
\!\!\!\!\!\!\!\!\!\!\!\!\!\!\!\!\!&\check{\B}^{\overline{01}}_{p,q,r}\!=\chi^{-1}(\Lambd^{(0)\times}_r)=\check{\Gamm}^{\overline{01}}_{p,q,r}\subseteq \check{\B}^{\overline{23}}_{p,q,r}\!=\chi^{-1}((\check{\Z}^2_{p,q,r}\cap\check{\Z}^3_{p,q,r})^{\times})\!=\check{\Gamm}^{\overline{23}}_{p,q,r}.\label{chAB_2}
\end{eqnarray}
\end{thm}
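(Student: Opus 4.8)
The plan is to follow the structure of the proof of Theorem \ref{theorem_AB}, replacing the adjoint representation by the twisted one $\check{\ad}$ and, accordingly, invoking the equivalences (\ref{1_for_AB}) and (\ref{2_for_AB}) of Lemma \ref{lemma_for_AB} together with the \emph{twisted} centralizers. The starting point is the analogue of (\ref{gkl_1}): since $\C^{\overline{kl}}_{p,q,r}=\C^{\overline{k}}_{p,q,r}\oplus\C^{\overline{l}}_{p,q,r}$, one has
\begin{eqnarray*}
&\check{\Gamm}^{\overline{kl}}_{p,q,r}=\{T\in\C^{\times}_{p,q,r}:\; \widehat{T}\C^{\overline{k}}_{p,q,r}T^{-1}\subseteq\C^{\overline{kl}}_{p,q,r},\; \widehat{T}\C^{\overline{l}}_{p,q,r}T^{-1}\subseteq\C^{\overline{kl}}_{p,q,r}\}.
\end{eqnarray*}
Applying the appropriate part of Lemma \ref{lemma_for_AB} to each of the two membership conditions turns $\check{\Gamm}^{\overline{kl}}_{p,q,r}$ into a preimage under $\psi$ or $\chi$ of the intersection $\check{\Z}^{\overline{k}}_{p,q,r}\cap\check{\Z}^{\overline{l}}_{p,q,r}=\check{\Z}^{\overline{kl}}_{p,q,r}$, after which it remains to identify this intersection via Lemma \ref{centralizers_qt_ds}.

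For the pairs $(k,l)=(1,2)$ and $(k,l)=(0,3)$ (together with the transpositions $(2,1)$, $(3,0)$), I would invoke equivalence (\ref{2_for_AB}), which shows that each condition $\widehat{T}\C^{\overline{k}}_{p,q,r}T^{-1}\subseteq\C^{\overline{kl}}_{p,q,r}$ is equivalent to $\psi(T)=\widetilde{T}T\in\check{\Z}^{\overline{k}\times}_{p,q,r}$. Intersecting the two resulting conditions gives $\check{\Gamm}^{\overline{12}}_{p,q,r}=\psi^{-1}((\check{\Z}^{\overline{1}}_{p,q,r}\cap\check{\Z}^{\overline{2}}_{p,q,r})^{\times})$ and $\check{\Gamm}^{\overline{03}}_{p,q,r}=\psi^{-1}((\check{\Z}^{\overline{0}}_{p,q,r}\cap\check{\Z}^{\overline{3}}_{p,q,r})^{\times})$. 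Rewriting $\check{\Z}^{\overline{1}}_{p,q,r}\cap\check{\Z}^{\overline{2}}_{p,q,r}=\check{\Z}^{1}_{p,q,r}\cap\check{\Z}^{2}_{p,q,r}$ and $\check{\Z}^{\overline{03}}_{p,q,r}=\langle\Z^3_{p,q,r}\rangle_{(0)}=\Z^3_{p,q,r}\cap\C^{(0)}_{p,q,r}$ via Lemma \ref{centralizers_qt_ds} then reproduces the definitions (\ref{def_chA12}) and (\ref{def_chA03}), establishing (\ref{chAB_0}) and (\ref{chAB_1}).

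For the pairs $(k,l)=(0,1)$ and $(k,l)=(2,3)$ (with $(1,0)$, $(3,2)$), I would instead use equivalence (\ref{1_for_AB}), under which $\widehat{T}\C^{\overline{k}}_{p,q,r}T^{-1}\subseteq\C^{\overline{kl}}_{p,q,r}$ is equivalent to $\chi(T)=\widehat{\widetilde{T}}T\in\check{\Z}^{\overline{k}\times}_{p,q,r}$. This yields $\check{\Gamm}^{\overline{01}}_{p,q,r}=\chi^{-1}(\check{\Z}^{\overline{01}\times}_{p,q,r})$ and $\check{\Gamm}^{\overline{23}}_{p,q,r}=\chi^{-1}((\check{\Z}^2_{p,q,r}\cap\check{\Z}^3_{p,q,r})^{\times})$; the latter is exactly the definition (\ref{def_chB23}) of $\check{\B}^{\overline{23}}_{p,q,r}$. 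For the former I would use $\check{\Z}^{\overline{01}}_{p,q,r}=\langle\Z^1_{p,q,r}\rangle_{(0)}$ from Lemma \ref{centralizers_qt_ds}, combine it with $\Z^1_{p,q,r}=\Z_{p,q,r}$ and the explicit center (\ref{Zpqr}), and observe that intersecting the center with $\C^{(0)}_{p,q,r}$ removes the $\C^n_{p,q,r}$ summand in the odd-$n$ case, leaving $\langle\Z^1_{p,q,r}\rangle_{(0)}=\Lambd^{(0)}_r=\ker(\check{\ad})$; this matches (\ref{def_chB01}) and completes (\ref{chAB_2}). The inclusion $\check{\B}^{\overline{01}}_{p,q,r}\subseteq\check{\B}^{\overline{23}}_{p,q,r}$ then follows because $\Lambd^{(0)}_r$ is a direct summand of $\check{\Z}^2_{p,q,r}\cap\check{\Z}^3_{p,q,r}$ for both parities, as read off from (\ref{ch_cc2_ch_cc3}).

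The main thing to watch is the bookkeeping of which norm function governs which pair: passing from $\ad$ to $\check{\ad}$ interchanges the roles of $\psi$ and $\chi$ relative to Theorem \ref{theorem_AB}, so that here the pairs $\{0,3\}$, $\{1,2\}$ are controlled by $\psi$ and the pairs $\{0,1\}$, $\{2,3\}$ by $\chi$. I expect the only genuinely nonroutine points to be the two even-part collapses $\langle\Z^1_{p,q,r}\rangle_{(0)}=\Lambd^{(0)}_r$ and $\langle\Z^3_{p,q,r}\rangle_{(0)}=\Z^3_{p,q,r}\cap\C^{(0)}_{p,q,r}$ supplied by (\ref{centralizers_qt_ds_2}); everything else is a direct substitution of Lemmas \ref{lemma_for_AB} and \ref{centralizers_qt_ds} followed by reading off the explicit formulas (\ref{chCC1_chCC2})--(\ref{ch_cc2_ch_cc3}).
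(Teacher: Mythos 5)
Your proposal is correct and follows essentially the same route as the paper's own proof: the same decomposition of $\check{\Gamm}^{\overline{kl}}_{p,q,r}$ into two membership conditions, the same application of equivalences (\ref{1_for_AB}) and (\ref{2_for_AB}) of Lemma \ref{lemma_for_AB} (with the same pairing of $\psi$ to $\{0,3\},\{1,2\}$ and $\chi$ to $\{0,1\},\{2,3\}$), and the same identifications of $\check{\Z}^{\overline{kl}}_{p,q,r}$ via Lemma \ref{centralizers_qt_ds}. Your handling of the collapse $\check{\Z}^{\overline{01}}_{p,q,r}=\langle\Z^1_{p,q,r}\rangle_{(0)}=\Lambd^{(0)}_r$ and of the inclusion $\check{\B}^{\overline{01}}_{p,q,r}\subseteq\check{\B}^{\overline{23}}_{p,q,r}$ matches the paper's argument, only spelled out in slightly more detail.
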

\begin{proof}
The inclusion $\check{\B}^{\overline{01}}_{p,q,r}\subseteq\check{\B}^{\overline{23}}_{p,q,r}$ follows from the definitions of the groups (\ref{def_chB01}) and (\ref{def_chB23}) and explicit form of the sets $\check{\Z}^2_{p,q,r}$ (\ref{ch_cc_2}) and $\check{\Z}^3_{p,q,r}$ (\ref{ch_cc_1}). Let us prove the equalities in (\ref{chAB_0})--(\ref{chAB_2}). We have
\begin{eqnarray*}
    \check{\Gamm}^{\overline{kl}}_{p,q,r}=\{T\in\C^{\times}_{p,q,r}:\quad\widehat{T}\C^{\overline{k}}_{p,q,r}T^{-1}\subseteq\C^{\overline{kl}}_{p,q,r},\quad\widehat{T}\C^{\overline{l}}_{p,q,r}T^{-1}\subseteq\C^{\overline{kl}}_{p,q,r}\}.
\end{eqnarray*}
Consider the case $(k,l)=(0,1),(2,3)$. Since $\{T\in\C^{\times}_{p,q,r}:\; \widehat{T}\C^{\overline{k}}_{p,q,r}T^{-1}\subseteq\C^{\overline{kl}}_{p,q,r}\}=\{T\in\C^{\times}_{p,q,r}:\;\;\widehat{\widetilde{T}}T\in\check{\Z}^{\overline{k}\times}_{p,q,r}\}$ by Lemma \ref{lemma_for_AB}, we get
\begin{eqnarray*}
    \check{\Gamm}^{\overline{kl}}_{p,q,r}\!=\!\{T\in\C^{\times}_{p,q,r}\!: \widehat{\widetilde{T}}T\in\check{\Z}^{\overline{k}\times}_{p,q,r}, \widehat{\widetilde{T}}T\in\check{\Z}^{\overline{l}\times}_{p,q,r}\}\!=\!\{T\in\C^{\times}_{p,q,r}\!:\widehat{\widetilde{T}}T\in\check{\Z}^{\overline{kl}\times}_{p,q,r}\}.
\end{eqnarray*}
Thus,
\begin{eqnarray*}
    \!\!\!\!\!\!\!\!\!\!&&\check{\Gamm}^{\overline{01}}_{p,q,r}=\{T\in\C^{\times}_{p,q,r}:\; \widehat{\widetilde{T}}T\in\check{\Z}^{\overline{01}\times}_{p,q,r}=(\check{\Z}^1_{p,q,r}\cap\C^{(0)}_{p,q,r})^{\times}=\Lambd^{(0)\times}_r\}=\check{\B}^{\overline{01}}_{p,q,r},
    \\
    \!\!\!\!\!\!\!\!\!\!&&\check{\Gamm}^{\overline{23}}_{p,q,r}=\{T\in\C^{\times}_{p,q,r}:\; \widehat{\widetilde{T}}T\in\check{\Z}^{\overline{23}\times}_{p,q,r}=(\check{\Z}^2_{p,q,r}\cap\check{\Z}^3_{p,q,r})^{\times}\}=\check{\B}^{\overline{23}}_{p,q,r},
\end{eqnarray*}
where we use the statements (\ref{centralizers_qt_ds_2_0})--(\ref{centralizers_qt_ds_2}) of Lemma \ref{centralizers_qt_ds}. Now let us consider the case $(k,l)=(0,3),(1,2)$. Since $\{T\in\C^{\times}_{p,q,r}:\;\; \widehat{T}\C^{\overline{k}}_{p,q,r}T^{-1}\subseteq\C^{\overline{kl}}_{p,q,r}\}=\{T\in\C^{\times}_{p,q,r}:\;\; \widetilde{T}T\in\check{\Z}^{\overline{k}\times}_{p,q,r}\}$ by Lemma \ref{lemma_for_AB}, we obtain $\check{\Gamm}^{\overline{kl}}_{p,q,r}=\{T\in\C^{\times}_{p,q,r}:\;\; {\widetilde{T}}T\in\check{\Z}^{\overline{kl}\times}_{p,q,r}\}$. Thus,
\begin{eqnarray*}
    \!\!\!\!\!\!\!\!\!\!&&\check{\Gamm}^{\overline{03}}_{p,q,r}=\{T\in\C^{\times}_{p,q,r}:\quad {\widetilde{T}}T\in\check{\Z}^{\overline{03}\times}_{p,q,r}=(\Z^3_{p,q,r}\cap\C^{(0)}_{p,q,r})^{\times}\}=\check{\A}^{\overline{03}}_{p,q,r},
    \\
    \!\!\!\!\!\!\!\!\!\!&&\check{\Gamm}^{\overline{12}}_{p,q,r}=\{T\in\C^{\times}_{p,q,r}:\quad {\widetilde{T}}T\in\check{\Z}^{\overline{12}\times}_{p,q,r}=(\check{\Z}^1_{p,q,r}\cap\check{\Z}^2_{p,q,r})^{\times}\}=\check{\A}^{\overline{12}}_{p,q,r},
\end{eqnarray*}
where we use Lemma \ref{centralizers_qt_ds}, and the proof is completed.  
\end{proof}

\begin{rem}\label{rem43}
Note that
 \begin{eqnarray}\label{a12a23}
     \check{\A}^{\overline{12}}_{p,q,r}\subseteq\A^{\overline{23}}_{p,q,r},\quad \check{\B}^{\overline{01}}_{p,q,r}\subseteq\B^{\overline{12}}_{p,q,r}.
 \end{eqnarray} 
 Moreover, if  $n$ is even, then the groups coincide:
 \begin{eqnarray}
     \check{\A}^{\overline{12}}_{p,q,r} = \A^{\overline{23}}_{p,q,r},\quad \check{\B}^{\overline{01}}_{p,q,r}=\B^{\overline{12}}_{p,q,r},\qquad \mbox{$n$ is even}.
 \end{eqnarray}
\end{rem}
The relations between the groups $\check{\A}^{\overline{12}}_{p,q,r}$, $\check{\A}^{\overline{03}}_{p,q,r}$, $\A^{\overline{01}}_{p,q,r}$, $\A^{\overline{23}}_{p,q,r}$ and $\check{\B}^{\overline{01}}_{p,q,r}$, $\check{\B}^{\overline{23}}_{p,q,r}$, $\B^{\overline{12}}_{p,q,r}$, $\B^{\overline{03}}_{p,q,r}$ are studied in Section \ref{section_relations}.

\subsection{Groups $\tilde{\Q}^{\overline{12}}_{p,q,r}$, $\tilde{\Q}^{\overline{01}}_{p,q,r}$, $\tilde{\Q}^{\overline{03}}_{p,q,r}$, $\tilde{\Q}^{\overline{23}}_{p,q,r}$, $\tilde{\Gamm}^{\overline{01}}_{p,q,r}$, $\tilde{\Gamm}^{\overline{12}}_{p,q,r}$, $\tilde{\Gamm}^{\overline{23}}_{p,q,r}$, $\tilde{\Gamm}^{\overline{03}}_{p,q,r}$}\label{section_tildeAB}

Let us consider the following four groups:
\begin{eqnarray}
     \!\!\!\!\!\!\!\tilde{\Q}^{\overline{01}}_{p,q,r}&:=&\{T\in\C^{\times}_{p,q,r}: \quad\widetilde{T}T\in\Z^{4\times}_{p,q,r},\quad \widehat{\widetilde{T}}T\in\check{\Z}^{1\times}_{p,q,r}\}\label{def_tQ01}
     \\
     \!\!\!\!\!\!\!&=&\psi^{-1}(\Z^{4\times}_{p,q,r})\cap\chi^{-1}(\check{\Z}^{1\times}_{p,q,r}),
     \\
       \!\!\!\!\!\!\!\tilde{\Q}^{\overline{23}}_{p,q,r}&:=&\{T\in\C^{\times}_{p,q,r}: \quad\widetilde{T}T\in\Z^{2\times}_{p,q,r}, \quad\widehat{\widetilde{T}}T\in\check{\Z}^{3\times}_{p,q,r}\}\label{def_tQ23}
       \\
       \!\!\!\!\!\!\!&=&\psi^{-1}(\Z^{2\times}_{p,q,r})\cap\chi^{-1}(\check{\Z}^{3\times}_{p,q,r}),
     \\
    \!\!\!\!\!\!\!\tilde{\Q}^{\overline{12}}_{p,q,r}&:=&\{T\in\C^{\times}_{p,q,r}: \quad\widetilde{T}T\in\check{\Z}^{1\times}_{p,q,r}, \quad\widehat{\widetilde{T}}T\in\Z^{2\times}_{p,q,r}\}\label{def_tQ12}
    \\
    \!\!\!\!\!\!\!&=&\psi^{-1}(\check{\Z}^{1\times}_{p,q,r})\cap\chi^{-1}(\Z^{2\times}_{p,q,r}),
    \\
     \!\!\!\!\!\!\!\tilde{\Q}^{\overline{03}}_{p,q,r}&:=&\{T\in\C^{\times}_{p,q,r}: \quad\widetilde{T}T\in\check{\Z}^{3\times}_{p,q,r},\quad\widehat{\widetilde{T}}T\in\Z^{4\times}_{p,q,r}\}\label{def_tQ03}
     \\
     \!\!\!\!\!\!\!&=&\psi^{-1}(\check{\Z}^{3\times}_{p,q,r})\cap\chi^{-1}(\Z^{4\times}_{p,q,r}),\label{def_tQ03_}
\end{eqnarray}
where the sets $\Z^2_{p,q,r}$ and $\Z^4_{p,q,r}$ are centralizers of the subspaces $\C^{2}_{p,q,r}$ and $\C^{4}_{p,q,r}$ respectively,  and $\check{\Z}^1_{p,q,r}$  and $\check{\Z}^3_{p,q,r}$ (\ref{ch_cc_1}) are twisted centralizers of the subspaces $\C^{1}_{p,q,r}$ and $\C^{3}_{p,q,r}$ respectively (see Lemma~\ref{cases_we_use}).

\begin{thm}\label{theorem_tildeQ}
For the groups $($\ref{def_tQ01}$)$--$($\ref{def_tQ03_}$)$ in arbitrary $\C_{p,q,r}$, we have
\begin{eqnarray*}
    &\tilde{\Q}^{\overline{01}}_{p,q,r}=\tilde{\Gamm}^{\overline{01}}_{p,q,r},\qquad
    \tilde{\Q}^{\overline{23}}_{p,q,r}=\tilde{\Gamm}^{\overline{23}}_{p,q,r},
    \qquad\tilde{\Q}^{\overline{12}}_{p,q,r}=\tilde{\Gamm}^{\overline{12}}_{p,q,r}\subseteq \tilde{\Q}^{\overline{03}}_{p,q,r}=\tilde{\Gamm}^{\overline{03}}_{p,q,r}.
\end{eqnarray*}
\end{thm}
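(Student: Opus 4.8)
The plan is to exploit the fact that the twisted adjoint representation $\tilde{\ad}$ restricts to the ordinary adjoint $\ad$ on the even quaternion-type subspaces $\C^{\overline{0}}_{p,q,r}$, $\C^{\overline{2}}_{p,q,r}$ and to the twisted adjoint $\check{\ad}$ on the odd ones $\C^{\overline{1}}_{p,q,r}$, $\C^{\overline{3}}_{p,q,r}$. Indeed, if $U\in\C^{\overline{k}}_{p,q,r}$ with $k$ even then $\langle U\rangle_{(1)}=0$ and $\tilde{\ad}_T(U)=TUT^{-1}$, whereas if $k$ is odd then $\langle U\rangle_{(0)}=0$ and $\tilde{\ad}_T(U)=\widehat{T}UT^{-1}$. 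Since each direct sum $\C^{\overline{kl}}_{p,q,r}=\C^{\overline{k}}_{p,q,r}\oplus\C^{\overline{l}}_{p,q,r}$ occurring in the statement pairs exactly one even with one odd quaternion type, the stabilizer condition $\tilde{\ad}_T(\C^{\overline{kl}}_{p,q,r})\subseteq\C^{\overline{kl}}_{p,q,r}$ decouples into two independent inclusions: an $\ad$-type inclusion $T\C^{\overline{k}}_{p,q,r}T^{-1}\subseteq\C^{\overline{kl}}_{p,q,r}$ coming from the even summand, and a $\check{\ad}$-type inclusion $\widehat{T}\C^{\overline{l}}_{p,q,r}T^{-1}\subseteq\C^{\overline{kl}}_{p,q,r}$ coming from the odd summand.

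Next I would feed each of these two inclusions into the appropriate equivalence of Lemma \ref{lemma_for_AB}, thereby converting them into membership conditions for the norm functions $\psi(T)=\widetilde{T}T$ and $\chi(T)=\widehat{\widetilde{T}}T$. For $\tilde{\Gamm}^{\overline{01}}_{p,q,r}$, the even summand $\C^{\overline{0}}_{p,q,r}$ yields $\widetilde{T}T\in\Z^{\overline{0}\times}_{p,q,r}$ via (\ref{1_for_AB_0}) and the odd summand $\C^{\overline{1}}_{p,q,r}$ yields $\widehat{\widetilde{T}}T\in\check{\Z}^{\overline{1}\times}_{p,q,r}$ via (\ref{1_for_AB}); rewriting these through Lemma \ref{centralizers_qt_ds} as $\widetilde{T}T\in\Z^{4\times}_{p,q,r}$ and $\widehat{\widetilde{T}}T\in\check{\Z}^{1\times}_{p,q,r}$ recovers precisely the definition (\ref{def_tQ01}) of $\tilde{\Q}^{\overline{01}}_{p,q,r}$. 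The case $\tilde{\Gamm}^{\overline{23}}_{p,q,r}$ is identical, using the same two equivalences to reach $\widetilde{T}T\in\Z^{2\times}_{p,q,r}$ and $\widehat{\widetilde{T}}T\in\check{\Z}^{3\times}_{p,q,r}$. For $\tilde{\Gamm}^{\overline{12}}_{p,q,r}$ and $\tilde{\Gamm}^{\overline{03}}_{p,q,r}$ the even--odd pairing instead calls for (\ref{2_for_AB_0}) and (\ref{2_for_AB}), producing $\widetilde{T}T\in\check{\Z}^{1\times}_{p,q,r}$, $\widehat{\widetilde{T}}T\in\Z^{2\times}_{p,q,r}$ and $\widetilde{T}T\in\check{\Z}^{3\times}_{p,q,r}$, $\widehat{\widetilde{T}}T\in\Z^{4\times}_{p,q,r}$ respectively, matching (\ref{def_tQ12}) and (\ref{def_tQ03}). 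This settles all four equalities $\tilde{\Q}^{\overline{kl}}_{p,q,r}=\tilde{\Gamm}^{\overline{kl}}_{p,q,r}$.

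For the remaining inclusion $\tilde{\Q}^{\overline{12}}_{p,q,r}\subseteq\tilde{\Q}^{\overline{03}}_{p,q,r}$, it suffices to compare the defining conditions and verify the two set inclusions $\check{\Z}^1_{p,q,r}\subseteq\check{\Z}^3_{p,q,r}$ and $\Z^2_{p,q,r}\subseteq\Z^4_{p,q,r}$. Both are immediate from the explicit descriptions in Lemma \ref{cases_we_use}: one has $\check{\Z}^1_{p,q,r}=\Lambd_r$, which is visibly a summand of $\check{\Z}^3_{p,q,r}$, and $\Z^2_{p,q,r}$ (equal to $\Lambd_r\oplus\C^n_{p,q,r}$ when $r\neq n$ and to $\Lambd_r$ when $r=n$) sits inside $\Z^4_{p,q,r}$ in both cases. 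The only genuine obstacle is the bookkeeping: one must select the correct equivalence of Lemma \ref{lemma_for_AB} in each instance, the dichotomy between (\ref{1_for_AB_0})/(\ref{1_for_AB}) and (\ref{2_for_AB_0})/(\ref{2_for_AB}) being governed by whether the ordered pair $(k,l)$ lies in $\{(0,1),(1,0),(2,3),(3,2)\}$ or in $\{(0,3),(3,0),(1,2),(2,1)\}$, while keeping straight which summand is even (governed by $\ad$, hence the untwisted conjugation $T\,\cdot\,T^{-1}$) and which is odd (governed by $\check{\ad}$, hence $\widehat{T}\,\cdot\,T^{-1}$). Once this pairing is pinned down, no real computation remains.
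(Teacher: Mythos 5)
Your proposal is correct and follows essentially the same route as the paper's proof: the paper likewise decouples $\tilde{\ad}$-invariance of $\C^{\overline{kl}}_{p,q,r}$ into an $\ad$-type inclusion for the even quaternion-type summand and a $\check{\ad}$-type inclusion for the odd one, then applies the corresponding equivalences of Lemma \ref{lemma_for_AB} together with Lemmas \ref{centralizers_qt_ds} and \ref{cases_we_use} to identify each $\tilde{\Gamm}^{\overline{kl}}_{p,q,r}$ with $\tilde{\Q}^{\overline{kl}}_{p,q,r}$, and obtains $\tilde{\Q}^{\overline{12}}_{p,q,r}\subseteq\tilde{\Q}^{\overline{03}}_{p,q,r}$ from the inclusions $\check{\Z}^1_{p,q,r}\subseteq\check{\Z}^3_{p,q,r}$ and $\Z^2_{p,q,r}\subseteq\Z^4_{p,q,r}$ exactly as you do. Your bookkeeping of which case of Lemma \ref{lemma_for_AB} applies to each pair $(k,l)$ is accurate throughout.
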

\begin{proof}
  We have $\tilde{\Q}^{\overline{12}}_{p,q,r}\subseteq\tilde{\Q}^{\overline{03}}_{p,q,r}$, since  $\check{\Z}^1_{p,q,r}\subseteq\check{\Z}^3_{p,q,r}$ and $\Z^2_{p,q,r}\subseteq\Z^4_{p,q,r}$ (see Lemma \ref{cases_we_use}).
     Also we have
    \begin{eqnarray}
    \!\!\!\tilde{\Gamma}^{\overline{kl}}_{p,q,r}=\{T\in\C^{\times}_{p,q,r}:\;\;T\C^{\overline{k}}_{p,q,r}T^{-1}\subseteq\C^{\overline{kl}}_{p,q,r},\;\widehat{T}\C^{\overline{l}}_{p,q,r}T^{-1}\subseteq\C^{\overline{kl}}_{p,q,r}\}
    \end{eqnarray}
    for $(k,l)=(0,1),(2,3),(2,1),(0,3)$.
    Using Lemmas  \ref{centralizers_qt_ds} and \ref{lemma_for_AB} and Remark \ref{cases_we_use}, we get
    \begin{eqnarray*}
\tilde{\Gamma}^{\overline{01}}_{p,q,r}&=&\{T\in\C^{\times}_{p,q,r}:\;\; \widetilde{T}T\in\Z^{\overline{0}\times}_{p,q,r}=\Z^{4\times}_{p,q,r},\;\;\widehat{\widetilde{T}}T\in\check{\Z}^{\overline{1}\times}_{p,q,r}=\check{\Z}^{1\times}_{p,q,r}=\Lambda^{\times}_r\}
         \\
         &=&\tilde{\Q}^{\overline{01}}_{p,q,r},
        \\
\tilde{\Gamma}^{\overline{23}}_{p,q,r}&=&\{T\in\C^{\times}_{p,q,r}:\;\; \widetilde{T}T\in\Z^{\overline{2}\times}_{p,q,r}=\Z^{2\times}_{p,q,r},\;\; \widehat{\widetilde{T}}T\in\check{\Z}^{\overline{3}\times}_{p,q,r}=\check{\Z}^{3\times}_{p,q,r}\}
        \\
        &=&\tilde{\Q}^{\overline{23}}_{p,q,r},
        \\
\tilde{\Gamma}^{\overline{12}}_{p,q,r}&=&\{T\in\C^{\times}_{p,q,r}:\;\; \widetilde{T}T\in\check{\Z}^{\overline{1}\times}_{p,q,r}=\check{\Z}^{1\times}_{p,q,r}=\Lambda^{\times}_r,\;\; \widehat{\widetilde{T}}T\in{\Z}^{\overline{2}\times}_{p,q,r}=\Z^{2\times}_{p,q,r}\}
        \\
        &=&\tilde{\Q}^{\overline{12}}_{p,q,r},
        \\
\tilde{\Gamma}^{\overline{03}}_{p,q,r}&=&\{T\in\C^{\times}_{p,q,r}:\;\; \widetilde{T}T\in\check{\Z}^{\overline{3}\times}_{p,q,r}=\check{\Z}^{3\times}_{p,q,r},\;\; \widehat{\widetilde{T}}T\in{\Z}^{\overline{0}\times}_{p,q,r}=\Z^{4\times}_{p,q,r}\}
        \\
        &=&\tilde{\Q}^{\overline{03}}_{p,q,r},
    \end{eqnarray*}
    and the proof is completed.
\end{proof}

\begin{rem}\label{remark_tildeQ}
The work \cite{GenSpin} considers the groups $\check{\Gamm}^{\overline{kl}}_{p,q,0}$ (\ref{ch_Gamma_kl}) preserving the direct sums of the subspaces $\C^{\overline{k}}_{p,q,0}$, $k=0,1,2,3$, under the twisted adjoint representation $\check{\ad}$ (\ref{twa1}) in the particular case of the non-degenerate Clifford algebras $\C_{p,q,0}$. However, that work does not consider the groups $\tilde{\Gamm}^{\overline{kl}}_{p,q,0}$ (\ref{ti_Gamma_kl}) preserving these subspaces under the twisted adjoint representation $\tilde{\ad}$ (\ref{twa22}). We provide the answer to this question in Theorem \ref{theorem_tildeQ} above. Note that by Lemma \ref{cases_we_use},
\begin{eqnarray}
    \!\!\!\!\!\!\!\!\!\!\!\!\!\!\!\!\!\!\!\!&&\tilde{\Q}^{\overline{01}}_{p,q,0}=\tilde{\Q}^{\overline{23}}_{p,q,0}=\{T\in\C^{\times}_{p,q,0}:\quad \widetilde{T}T\in\C^{0n\times}_{p,q,0},\quad \widehat{\widetilde{T}}T\in\C^{0\times}\},
    \\
    \!\!\!\!\!\!\!\!\!\!\!\!\!\!\!\!\!\!\!\!&&\tilde{\Q}^{\overline{12}}_{p,q,0}=\tilde{\Q}^{\overline{03}}_{p,q,0}=\{T\in\C^{\times}_{p,q,0}:\quad \widetilde{T}T\in\C^{0\times},\quad \widehat{\widetilde{T}}T\in\C^{0n\times}_{p,q,0}\},
\end{eqnarray}
Using $\widetilde{T}T\in\C^{\overline{01}}_{p,q,0}$ and $\widehat{\widetilde{T}}T\in\C^{\overline{03}}_{p,q,0}$ for any $T\in\C_{p,q,0}$ (\ref{norms_01_03}), we get that in the case $n=1,2,3\mod{4}$,  the groups $\tilde{\Q}^{\overline{01}}_{p,q,0}$, $\tilde{\Q}^{\overline{23}}_{p,q,0}$, $\tilde{\Q}^{\overline{12}}_{p,q,0}$, and $\tilde{\Q}^{\overline{03}}_{p,q,0}$ coincide with the groups $\Q$ and $\Q^{\pm}$ considered in \cite{OnInner} and \cite{GenSpin} respectively:
\begin{eqnarray}
\Q &=& \{T\in\C^{\times}_{p,q,0}:\quad \widetilde{T}T\in\Z^{\times}_{p,q,0},\quad\widehat{\widetilde{T}}T\in\Z^{\times}_{p,q,0}\},
\\
\Q^{\pm} &=& \{T\in\C^{\times}_{p,q,0}:\quad \widetilde{T}T\in\C^{0\times},\quad\widehat{\widetilde{T}}T\in\C^{0\times}\}
\end{eqnarray}
and preserving the subspaces $\C^{\overline{k}}_{p,q,0}$ (\ref{qtdef}), $k=0,1,2,3$, under $\ad$ (\ref{ar}) and $\check{\ad}$ (\ref{twa1}) respectively (see Theorem 6.3 \cite{OnInner} and Theorem 4 \cite{GenSpin})
\begin{eqnarray}
\!\!\!\!\!\!\!\!\!\!\!\!\!\!\!\!\!\!\!\!&&\tilde{\Q}^{\overline{01}}_{p,q,0}=\tilde{\Q}^{\overline{23}}_{p,q,0}=\Q,\qquad \tilde{\Q}^{\overline{12}}_{p,q,0}=\tilde{\Q}^{\overline{03}}_{p,q,0}=\Q^{\pm},\qquad  n=1\mod{4};
    \\
\!\!\!\!\!\!\!\!\!\!\!\!\!\!\!\!\!\!\!\!&&\tilde{\Q}^{\overline{01}}_{p,q,0}=\tilde{\Q}^{\overline{23}}_{p,q,0}=\tilde{\Q}^{\overline{12}}_{p,q,0}=\tilde{\Q}^{\overline{03}}_{p,q,0}=\Q^{\pm},\qquad n=2\mod{4};
\\
\!\!\!\!\!\!\!\!\!\!\!\!\!\!\!\!\!\!\!\!&&\tilde{\Q}^{\overline{01}}_{p,q,0}=\tilde{\Q}^{\overline{23}}_{p,q,0}=\Q^\pm,\qquad \tilde{\Q}^{\overline{12}}_{p,q,0}=\tilde{\Q}^{\overline{03}}_{p,q,0}=\Q,\qquad  n=3\mod{4}.
\end{eqnarray}
\end{rem}

\section{Relations between the groups}\label{section_relations}

This section studies the relations between the groups considered in Section~\ref{section_ds_qt}.
In Theorems \ref{lemma_AH}, \ref{theorem_rel_A}, and \ref{theorem_rel_B}, we prove that the four groups in the  families of type $\A$ ($\A^{\overline{01}}_{p,q,r}$ (\ref{def_A01}), $\A^{\overline{23}}_{p,q,r}$ (\ref{def_A23}), $\check{\A}^{\overline{12}}_{p,q,r}$ (\ref{def_chA12}), $\check{\A}^{\overline{03}}_{p,q,r}$ (\ref{def_chA03})) and type $\B$ ($\B^{\overline{12}}_{p,q,r}$ (\ref{def_B12}), $\B^{\overline{03}}_{p,q,r}$ (\ref{def_B03}), $\check{\B}^{\overline{01}}_{p,q,r}$ (\ref{def_chB01}), $\check{\B}^{\overline{23}}_{p,q,r}$ (\ref{def_chB23}))
respectively  
are related to each other through multiplication.

In this section, we use the notion of the Jacobson radical $\rad\C_{p,q,r}$ of the algebra $\C_{p,q,r}$. 
Let $A,B,C$ be ordered multi-indices with non-zero length and $e_A=e_{a_1}\cdots e_{a_k}$ with $\{a_1,\ldots,a_k\}\subseteq\{1,\ldots,p\}$, $e_B=e_{b_1}\cdots e_{b_l}$ with $\{b_1,\ldots,b_l\}\subseteq\{p+1,\ldots,p+q\}$, $e_C=e_{c_1}\cdots e_{c_m}$ with $\{c_1,\ldots,c_m\}\subseteq\{p+q+1,\ldots,n\}$.
An arbitrary element $y\in\rad\C_{p,q,r}$ has the form
\begin{eqnarray}\label{rad}
y=\sum_C v_C e_C +\sum_{A,C} v_{AC} e_A e_C+\sum_{B,C} v_{BC} e_B e_C +\sum_{A,B,C} v_{ABC}e_{A}e_{B}e_{C},
\end{eqnarray}
where $v_C,v_{AC},v_{BC},v_{ABC}\in\F$.
In this section, we need auxiliary Lemma \ref{lemma_inv_rad}, which is proved, for example, in \cite{OnSomeLie}, and Remarks \ref{remark_inv_l0} and \ref{remark_commute_L0}.

\begin{lem}\label{lemma_inv_rad}
An element $T\in\C^{0}\oplus\rad\C_{p,q,r}$ is invertible if and only if its projection onto the  grade $0$ subspace is non-zero:
    \begin{eqnarray}
        T\in\C^{0}\oplus\rad\C_{p,q,r},\quad \langle T\rangle_0\neq 0\qquad\Leftrightarrow\qquad T\in(\C^{0}\oplus\rad\C_{p,q,r})^{\times}.
    \end{eqnarray}
\end{lem}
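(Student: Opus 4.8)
The plan is to reduce everything to one structural fact: the Jacobson radical of a finite-dimensional associative $\F$-algebra is a nilpotent two-sided ideal, so there exists $N\geq 1$ with $(\rad\C_{p,q,r})^N=\{0\}$. Given $T\in\C^0\oplus\rad\C_{p,q,r}$, I would write $T=\lambda e+y$ with $\lambda:=\langle T\rangle_0\in\F$ and $y\in\rad\C_{p,q,r}$, after which the claim becomes the elementary behaviour of nilpotents in a unital ring.

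For the forward implication I would argue by contraposition. If $\langle T\rangle_0=0$, then $T=y$ is nilpotent, $y^N=0$; but a nilpotent element of a nonzero unital algebra is never invertible, since an inverse $y^{-1}$ would give $e=(y^{-1})^N y^N=0$, a contradiction. Hence invertibility of $T$ forces $\lambda\neq 0$.

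For the reverse implication I would assume $\lambda\neq 0$ and factor $T=\lambda(e+z)$ with $z:=\lambda^{-1}y\in\rad\C_{p,q,r}$, so that $z^N=0$. The truncated Neumann series $\sum_{j=0}^{N-1}(-z)^j$ is then a two-sided inverse of $e+z$, because $z$ commutes with $e$ and the product telescopes, $(e+z)\sum_{j=0}^{N-1}(-z)^j=e-(-z)^N=e$. Consequently $T^{-1}=\lambda^{-1}\sum_{j=0}^{N-1}(-\lambda^{-1}y)^j$ exists explicitly, completing the equivalence.

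The only genuine input is the nilpotency of $\rad\C_{p,q,r}$; everything else is the standard geometric-series trick, so there is no real obstacle here. If one wished to avoid quoting the general theorem, one could instead read the nilpotency directly off the explicit description (\ref{rad}): every term there contains at least one degenerate generator $e_{p+q+1},\ldots,e_n$, and since these square to zero any sufficiently high power of such an element vanishes, giving $(\rad\C_{p,q,r})^N=\{0\}$ by hand.
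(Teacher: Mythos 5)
Your proof is correct and takes essentially the same route as the paper's: the paper does not prove Lemma~\ref{lemma_inv_rad} inline but cites \cite{OnSomeLie}, where the argument rests on precisely the two ingredients you use — nilpotency of $\rad\C_{p,q,r}$ (visible directly from the explicit form (\ref{rad}), since any product of more than $r$ factors each containing a degenerate generator must repeat one of $e_{p+q+1},\ldots,e_n$ and hence vanishes) and the truncated Neumann series for $(\lambda e + y)^{-1}$. Your approach is also fully consistent with the paper's own toolkit, as the identical geometric-series trick appears in Remark~\ref{remark_inv_l0} for nilpotent elements of $\Lambda^{\overline{0}}_r$.
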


To prove Theorem \ref{lemma_AH}, we also need Remarks \ref{remark_inv_l0} and \ref{remark_commute_L0}.

\begin{rem}\label{remark_inv_l0}
    Any element $X\in\Lambda^{\overline{k}}_r$, $k=0,1,2,3$, with $\langle X
    \rangle_{0}=0$ is nilpotent, i.e., there exists such $N\in\mathbb{N}$ that $X^N=0$. Therefore, the inverse of any
    \begin{eqnarray}
    \alpha e+ X,\qquad \alpha\in\F^{\times},\quad X\in\Lambda^{\overline{0}}_{r},\quad \langle X\rangle_0=0,
    \end{eqnarray}
    has the form
    \begin{eqnarray*}
        (\alpha e+ X)^{-1} = \frac{1}{\alpha}\left(e-\frac{1}{\alpha}X+\frac{1}{\alpha^2}X^2+\cdots+(-1)^{N-1}\frac{1}{\alpha^{N-1}}X^{N-1}\right)\in\Lambda^{\overline{0}\times}_r.
    \end{eqnarray*}
\end{rem}

\begin{rem}\label{remark_commute_L0}
Note that 
\begin{eqnarray}
\{X\in\C_{p,q,r}:\quad XV=VX,\quad \forall V\in\Lambda^{(0)}_r\}=\C_{p,q,r},\label{f_00}
\end{eqnarray}
because for any even basis element $e_{r_1\ldots r_l}\in\Lambda^{(0)}_r$, where $r_{1},\ldots, r_l\in\{p+q+1,\ldots,n\}$, and for any basis element $e_{a_1\ldots a_k}\in\C_{p,q,r}$, where $a_1,\ldots, a_k\in\{1,\ldots,n\}$, we have 
\begin{eqnarray}
e_{a_1\ldots a_k}e_{r_1\ldots r_l} = 
\left\lbrace
                \begin{array}{lll}
                e_{r_1\ldots r_l}e_{a_1\ldots a_k}, && \{a_1\ldots a_k\}\cap \{r_1\ldots r_l\}=\varnothing,
                \\
                0,&& \mbox{otherwise},
                \end{array}
                \right.
\end{eqnarray}
and we get (\ref{f_00}) by linearity.
\end{rem}

Consider the following simple auxiliary group:
\begin{eqnarray}
    \!\!\!\check{\A}_{p,q,r}:=\{T\in\C^{\times}_{p,q,r}:\quad \widetilde{T}T\in(\Z^1_{p,q,r}\cap\C^{(0)}_{p,q,r})^{\times}=\ker(\check{\ad})=\Lambd^{(0)\times}_r\},\label{def_chA}
\end{eqnarray}
where $\Z^1_{p,q,r}=\Z_{p,q,r}$ is the centralizer (\ref{cc_2}) of the grade-$1$ subspace and $\ker(\check{\ad})$ is the kernel (\ref{ker_check_ad}) of the twisted adjoint representation $\check{\ad}$ (\ref{twa1}). Note that $\check{\A}_{p,q,r}$ is a subgroup of the groups (\ref{def_A01}), (\ref{def_A23}), (\ref{def_chA12}), and (\ref{def_chA03}):
\begin{eqnarray}
\check{\A}_{p,q,r}\subseteq \A^{\overline{01}}_{p,q,r},\quad \check{\A}_{p,q,r}\subseteq \A^{\overline{23}}_{p,q,r},\quad 
\check{\A}_{p,q,r}\subseteq \check{\A}^{\overline{12}}_{p,q,r},\quad 
\check{\A}_{p,q,r}\subseteq \check{\A}^{\overline{03}}_{p,q,r}.
\end{eqnarray}
We have by (\ref{norms_01_03}),
\begin{eqnarray}
\!\!\!\!\!\!\!\!\!\!\!\!\!\!\!&\check{\A}_{p,q,r}=\A^{\overline{01}}_{p,q,r},\quad n=0,2,3\mod{4}; \;\quad \check{\A}_{p,q,r}=\A^{\overline{23}}_{p,q,r},\quad n=0\mod{4};\label{chA_c_1}
\\
\!\!\!\!\!\!\!\!\!\!\!\!\!\!\!&\check{\A}_{p,q,r}=\check{\A}^{\overline{12}}_{p,q,r},\quad n=0,3\mod{4};\;\quad \check{\A}_{p,q,r}=\check{\A}^{\overline{03}}_{p,q,r},\quad n=2,3\mod{4}.\label{chA_c_2}
\end{eqnarray}

Similarly to the group $\check{\A}_{p,q,r}$ (\ref{def_chA}), let us use the following notation for a simple auxiliary group of type $\B$:
\begin{eqnarray}
    \check{\B}_{p,q,r}:=\{T\in\C^{\times}_{p,q,r}:\quad \widehat{\widetilde{T}}T\in\Lambd^{(0)\times}_r\}.\label{def_chB}
\end{eqnarray}
The group $\check{\B}_{p,q,r}$ coincides with the group $\check{\B}^{\overline{01}}_{p,q,r}$ (\ref{def_chB01}):
\begin{eqnarray}
\check{\B}_{p,q,r}=\check{\B}^{\overline{01}}_{p,q,r}. 
\end{eqnarray}
In Theorem \ref{lemma_AH} and below, we denote by $\langle W\rangle_{\Lambda^{(0)}_r}$ the projection of a multivector $W\in\C_{p,q,r}$ onto the subspace $\Lambda^{(0)}_r$.

\begin{thm}\label{lemma_AH}
    Suppose $\H$ is a vector space and a monoid under multiplication, $\Lambda^{(0)}_r\subseteq\H\subseteq\C^0\oplus\rad\C_{p,q,r}$, and $(W - \langle W\rangle_{\Lambda^{(0)}_r})^2=0$  for any $W\in\H$. Then 
    \begin{eqnarray}
    \{T\in\C^{\times}_{p,q,r}:\quad {\widetilde{T}}T\in \H^{\times}\} &=& \check{\A}_{p,q,r} \H^{\times},\label{lem_chA_}
    \\
    \{T\in\C^{\times}_{p,q,r}:\quad \widehat{\widetilde{T}}T\in \H^{\times}\} &=& \check{\B}_{p,q,r} \H^{\times}.\label{lem_chB_}
    \end{eqnarray}
\end{thm}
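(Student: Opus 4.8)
The plan is to prove both identities by mutual inclusion; I would establish (\ref{lem_chA_}) in full and obtain (\ref{lem_chB_}) by the parallel argument, replacing the reversion and $\psi$ by the Clifford conjugation $\widehat{\widetilde{\;\;}}$ and $\chi$, and $\C^{\overline{01}}_{p,q,r}$ by $\C^{\overline{03}}_{p,q,r}$. Two preliminary remarks streamline everything. First, $\Lambda^{(0)}_r$ is central in $\C_{p,q,r}$ (Remark \ref{remark_commute_L0}), so any $\lambda\in\Lambda^{(0)\times}_r$ may be moved freely through products. Second, every $h\in\H^{\times}$ is invertible with $\langle h\rangle_0\neq0$ (Lemma \ref{lemma_inv_rad}); writing $h=\mu+N$ with $\mu:=\langle h\rangle_{\Lambda^{(0)}_r}\in\Lambda^{(0)\times}_r$ and $N:=h-\mu$ (so $N^{2}=0$), one gets $h^{-1}=\mu^{-1}-\mu^{-2}N\in\H$ from the monoid and vector-space properties. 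Hence $\H^{\times}$ is a group, and since $\Lambda^{(0)\times}_r\subseteq\H^{\times}$ we obtain $\check{\A}_{p,q,r}\subseteq\{T:\widetilde{T}T\in\H^{\times}\}$ at once (and likewise $\check{\B}_{p,q,r}\subseteq\{T:\widehat{\widetilde{T}}T\in\H^{\times}\}$).

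For the inclusion $\{T:\widetilde{T}T\in\H^{\times}\}\subseteq\check{\A}_{p,q,r}\H^{\times}$ I would argue constructively. If $r=0$ then $\H=\C^{0}$ and the claim is immediate, so assume $r\ge1$, whence $\F=\BC$ and scalar square roots exist. Given $T$ with $V:=\widetilde{T}T\in\H^{\times}$, note $\widetilde{V}=V$ by (\ref{norms_01_03}) and $\alpha:=\langle V\rangle_0\neq0$. Set $W:=\alpha^{-1}V-e$; this lies in $\rad\C_{p,q,r}$, is nilpotent, and lies in $\H$. Let $s:=e+\frac{1}{2}W-\frac{1}{8}W^{2}+\cdots$ be the terminating series with $s^{2}=e+W$, and put $h:=\sqrt{\alpha}\,s$. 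Then $h\in\H$ (an $\F$-combination of powers of $W$, all in the monoid $\H$), $\langle h\rangle_0=\sqrt{\alpha}\neq0$ so $h\in\H^{\times}$, and $\widetilde{W}=W$ gives $\widetilde{h}=h$ and $\widetilde{h}h=h^{2}=V$. Finally $S:=Th^{-1}$ satisfies $\widetilde{S}S=\widetilde{h^{-1}}Vh^{-1}=h^{-1}h^{2}h^{-1}=e\in\Lambda^{(0)\times}_r$, so $S\in\check{\A}_{p,q,r}$ and $T=Sh\in\check{\A}_{p,q,r}\H^{\times}$.

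For the reverse inclusion I would take $S\in\check{\A}_{p,q,r}$, $h\in\H^{\times}$, set $\lambda:=\widetilde{S}S\in\Lambda^{(0)\times}_r$, and compute, using centrality of $\lambda$,
\begin{eqnarray*}
\widetilde{Sh}\,Sh=\widetilde{h}\,\widetilde{S}S\,h=\widetilde{h}\lambda h=\lambda\,\widetilde{h}h.
\end{eqnarray*}
Since $\Lambda^{(0)}_r\subseteq\H$ and $\H$ is a monoid, it remains to show $\widetilde{h}h\in\H^{\times}$, i.e.\ that the norm map preserves $\H^{\times}$. Invertibility is clear, but the membership $\widetilde{h}h\in\H$ is the step I expect to be the main obstacle. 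Writing $h=\mu+N$ as above, $\widetilde{h}h=\widetilde{\mu}\mu+\widetilde{\mu}N+\widetilde{N}h$, where the first two summands lie in $\H$; hence $\widetilde{h}h\in\H$ precisely when $\widetilde{N}h\in\H$, equivalently (as $h^{-1}\in\H$) when $\widetilde{N}\in\H$. Thus the whole argument hinges on $\H$ being stable under the reversion, a property that does not follow from the monoid and square-zero hypotheses by themselves and which I would therefore verify separately. It holds for the centralizer-type subspaces $\H$ to which Theorem \ref{lemma_AH} is applied in Section \ref{section_relations}, since each of these is a direct sum of fixed-grade components, on which the reversion (respectively, for (\ref{lem_chB_}), the Clifford conjugation) acts as a scalar. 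Granting this invariance, $\widetilde{h}\in\H$ gives $\widetilde{h}h\in\H\cdot\H\subseteq\H$, so $\lambda\,\widetilde{h}h\in\Lambda^{(0)\times}_r\H^{\times}\subseteq\H^{\times}$, which completes (\ref{lem_chA_}); identity (\ref{lem_chB_}) follows by the same steps with $\widehat{\widetilde{\;\;}}$ in place of the reversion and $\C^{\overline{03}}_{p,q,r}$ in place of $\C^{\overline{01}}_{p,q,r}$.
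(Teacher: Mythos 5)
Your proposal is essentially correct and follows the same skeleton as the paper's proof (both inclusions are reduced to factoring out an element of $\H^{\times}$ so that the remaining factor has norm in $\Lambda^{(0)\times}_r$), but your hard inclusion is executed differently. The paper decomposes $\widetilde{T}T=(\alpha e+X)+W$ with $\alpha e+X\in\Lambda^{(0)\times}_r$ and $W^2=0$, and uses the explicit element $Y=e+\frac{1}{2}(\alpha e+X)^{-1}W$, for which $\widetilde{Y}=Y$, $Y^{-1}=e-\frac{1}{2}(\alpha e+X)^{-1}W$, and $\widetilde{(TY^{-1})}(TY^{-1})=\alpha e+X$; everything is rational in the data and works verbatim over $\BR$ and $\BC$. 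Your $h=\sqrt{\alpha}\,s$ is the same idea pushed one step further: since $W^2=0$, the paper's $Y$ is precisely the terminating square root $\bigl(e+(\alpha e+X)^{-1}W\bigr)^{1/2}$, whereas you take a full square root of $\widetilde{T}T$ itself, which needs only nilpotency of the radical part (not the square-zero hypothesis) but costs a scalar square root. That cost is a genuine, if small, defect: the paper's degenerate examples ($\C_{p,q,1}$ with $\eta_{aa}=\pm1$, PGA $\C_{p,0,1}$, the Galilei algebra $\C_{3,0,1}$) show that \emph{real} degenerate algebras are within scope, so you cannot dismiss the real case by reading ``$r\geq1$ implies $\F=\BC$''; if $\F=\BR$ and $\alpha<0$, your $h$ does not exist. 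The repair is immediate: take $h=\sqrt{|\alpha|}\,s$, which yields $\widetilde{S}S=-e\in\Lambda^{(0)\times}_r$, still placing $S=Th^{-1}$ in $\check{\A}_{p,q,r}$.

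The obstacle you flag in the easy inclusion is real, and you were right not to wave it away: stability of $\H$ under the reversion does not follow from the stated hypotheses, and in fact the paper's own proof of $\check{\A}_{p,q,r}\H^{\times}\subseteq\{T\in\C^{\times}_{p,q,r}:\widetilde{T}T\in\H^{\times}\}$ passes over exactly this point, since $\widetilde{(TY)}TY=\widetilde{T}T\,\widetilde{Y}Y$ lies in $\H^{\times}$ only if $\widetilde{Y}Y\in\H$, i.e.\ only if $\widetilde{Y}\in\H$. Concretely, in $\C_{1,0,2}$ (so $e_1^2=e$, $e_2^2=e_3^2=0$) take
\begin{eqnarray*}
\H=\mathrm{span}\{e,\;e_{23},\;e_{12}+e_{2}\}.
\end{eqnarray*}
This $\H$ is a vector space and a monoid (all products of the non-identity spanning elements vanish), and it satisfies $\Lambda^{(0)}_2=\mathrm{span}\{e,e_{23}\}\subseteq\H\subseteq\C^0\oplus\rad\C_{1,0,2}$ as well as the square-zero hypothesis; yet for $T=e+e_{12}+e_{2}\in\check{\A}_{1,0,2}\H^{\times}$ one computes $\widetilde{T}T=e+2e_{2}\notin\H$. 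So the right-hand side of (\ref{lem_chA_}) is not contained in the left-hand side, and Theorem \ref{lemma_AH} is false as literally stated: some stability assumption under the reversion (for (\ref{lem_chA_})), respectively the Clifford conjugation (for (\ref{lem_chB_})), must be added. Your repair --- checking that the concrete sets $\H$ fed into the theorem in Theorems \ref{theorem_rel_A} and \ref{theorem_rel_B} are direct sums of fixed-grade components and hence stable under both anti-involutions --- is exactly the right one, and with it your argument is complete; equivalently, one can add closedness of $\H$ under $\widetilde{\phantom{U}}$ (respectively $\widehat{\widetilde{\phantom{U}}}$) to the hypotheses of Theorem \ref{lemma_AH} at no cost to any of its applications.
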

\begin{proof}
Let us prove that the right-hand side is a subset of the left-hand side in (\ref{lem_chA_}). Suppose $T\in\check{\A}_{p,q,r}$ and $Y\in\H^{\times}$; then $\widetilde{(TY)}TY=\widetilde{Y}\widetilde{T}TY\in\H^{\times}$, because $\widetilde{T}T\in\Lambda^{(0)\times}_r\subseteq\H$ and $\H$ is a monoid. Thus, $TY\in\{T\in\C^{\times}_{p,q,r}:\quad {\widetilde{T}}T\in \H^{\times}\}$. 
Similarly, the right-hand side is a subset of the left-hand side in (\ref{lem_chB_}), because for $T\in\check{\B}_{p,q,r}$ and $Y\in\H^{\times}$, we have $\widehat{\widetilde{(TY)}}TY=\widehat{\widetilde{Y}}\widehat{\widetilde{T}}TY\in\H^{\times}$, since $\widehat{\widetilde{T}}T\in\Lambda^{(0)\times}_r\subseteq\H$.

Let us prove that the left-hand side is a subset of the right-hand side in (\ref{lem_chA_}).
Suppose $T\in\C^{\times}_{p,q,r}$ and $\widetilde{T}T=\alpha e+X+ W$, where $\alpha\in\F$, $X\in\Lambda^{(0)}_r$, $\langle X\rangle_0=0$, $W\in\H$, and $\langle W\rangle_{\Lambda^{(0)}_r}=0$.
Note that $\widetilde{T}T\in(\C^0\oplus\rad\C_{p,q,r})^{\times}$; therefore, $\alpha\neq0$ and $\alpha e+X$ is invertible by Lemma  \ref{lemma_inv_rad}. 
We need to show that there exists such $Y\in\H^{\times}$ that $TY^{-1}\in\check{\A}_{p,q,r}$, and the statement will be proved.
Consider 
\begin{eqnarray}
    Y = e + \frac{1}{2}(\alpha e + X)^{-1}W.\label{y1}
\end{eqnarray}
Note that $Y\in\H$ because $(\alpha e+X)^{-1}\in\Lambda^{(0)\times}_r\subset\H$, $W\in\H$, and $\H$ is a multiplicative monoid and linear space.
 $Y$ is invertible and 
\begin{eqnarray}
    Y^{-1} = e - \frac{1}{2}(\alpha e + X)^{-1}W,\label{y-1}
\end{eqnarray}
because
\begin{eqnarray}
    Y Y^{-1}&=& e -\frac{1}{4}(\alpha e + X)^{-1}W(\alpha e + X)^{-1}W
    \\
    &=&e -\frac{1}{4}((\alpha e + X)^{-1})^2W^2=e,
\end{eqnarray}
where we use $W(\alpha e + X)^{-1}=(\alpha e + X)^{-1}W$ by Remark \ref{remark_commute_L0} and apply the assumption $W^2=0$.
Also note that $\widetilde{Y}=Y$. This fact holds since $\widetilde{T}T\in\C^{\overline{01}}_{p,q,r}$ by (\ref{norms_01_03}); so, $X\in\Lambda^{\overline{0}}_r$, $W\in\C^{\overline{01}}_{p,q,r}$, therefore, $(\alpha e +X)^{-1}\in\Lambda^{\overline{0}}_r$ (Remark \ref{remark_inv_l0}), and $(\alpha e +X)^{-1}W\in\C^{\overline{01}}_{p,q,r}$.
Finally, we get
\begin{eqnarray}
   \!\!\!\!\!\!\!\!\!\!\!\!\!\!\!\!\!\!\!\! &&\widetilde{(TY^{-1})}(TY^{-1}) = Y^{-1}\widetilde{T}TY^{-1} 
    \\
  \!\!\!\!\!\!\!\!\!\!\!\!\!\!\! \!\!\!\!\! &&\quad = 
    \left(e - \frac{1}{2}(\alpha e + X)^{-1}W\right)\left(\alpha e+X+ W\right)\left(e - \frac{1}{2}(\alpha e + X)^{-1}W\right)\label{y2}
    \\
    \!\!\!\!\!\!\!\!\!\!\!\!\!\!\!\!\!\!\!\!&&\quad =\alpha e +X+W -\alpha(\alpha e+X)^{-1}W - (\alpha e+X)^{-1}WX\label{t_rel_f0}
    \\
   \!\!\!\!\!\!\!\!\!\!\!\!\!\!\!\!\!\!\!\! &&\quad  = (\alpha e+X)^{-1}\left((\alpha e+X)^2+(\alpha e+X)W-\alpha W - WX\right)
    \\
    \!\!\!\!\!\!\!\!\!\!\!\!\!\!\!\!\!\!\!\!&&\quad =\alpha e + X\in\Lambda^{(0)\times}_r,\label{t_rel_f1}
\end{eqnarray}
where in (\ref{t_rel_f0}), we use by Remark \ref{remark_commute_L0},
\begin{eqnarray*}
    W(\alpha e + X)^{-1}\!=\!(\alpha e + X)^{-1}W,\; XW\!=\!WX,\; X(\alpha e+X)^{-1}\!=\!(\alpha e+X)^{-1}X
\end{eqnarray*}
and $W^2=0$; and in (\ref{t_rel_f1}), we apply $XW=WX$ again. So, $TY^{-1}\in\check{\A}_{p,q,r}$ by definition (\ref{def_chA}), and the proof is completed.

Similarly, we prove that the left-hand side is a subset of the right-hand side in (\ref{lem_chB_}). Suppose $T\in\C^{\times}_{p,q,r}$ and $\widehat{\widetilde{T}}T=\alpha e+X+ W$, where $\alpha\in\F$, $X\in\Lambda^{(0)}_r$, $\langle X\rangle_0=0$, $W\in\H$, and $\langle W\rangle_{\Lambda^{(0)}_r}=0$.
Since $\widehat{\widetilde{T}}T\in(\C^0\oplus\rad\C_{p,q,r})^{\times}$; we have $\alpha\neq0$ and $\alpha e+X$ is invertible by Lemma  \ref{lemma_inv_rad}. 
We need to show that there exists such $Y\in\H^{\times}$ that $TY^{-1}\in\check{\B}_{p,q,r}$. Consider $Y$ from the formula (\ref{y1}) with the inverse (\ref{y-1}). Note that $\widehat{\widetilde{Y}}=Y$, because $\widehat{\widetilde{T}}T\in\C^{\overline{03}}_{p,q,r}$ by (\ref{norms_01_03}), so, $X\in\Lambda^{\overline{0}}_r$, $W\in\C^{\overline{03}}_{p,q,r}$, therefore, $(\alpha e +X)^{-1}\in\Lambda^{\overline{0}}_r$ (Remark \ref{remark_inv_l0}), and $(\alpha e +X)^{-1}W\in\C^{\overline{03}}_{p,q,r}$.
Finally,
\begin{eqnarray}
\widehat{\widetilde{(TY^{-1})}}(TY^{-1}) = Y^{-1}\widehat{\widetilde{T}}TY^{-1} =\alpha e + X\in\Lambda^{(0)\times}_r,
\end{eqnarray}
where the last equality is obtained by the formulas (\ref{y2})--(\ref{t_rel_f1}). So, $TY^{-1}\in\check{\B}^{\overline{01}}_{p,q,r}$, and this completes the proof.
\end{proof}

In Lemma \ref{lemma_mult_c3}, let us consider several sets that satisfy the assumptions of Theorem \ref{lemma_AH}.  
We need Remark \ref{rem_uv} and Lemma \ref{TT_12} to prove this lemma.

\begin{rem}\label{rem_uv}
For any $k,l=0,1,\ldots,n$,
\begin{eqnarray}
UV\in\Lambd^{k+l}_r,\qquad \forall U\in\Lambd^{k}_r,\qquad \forall V\in\Lambd^{l}_r.\label{remark_klkl}
\end{eqnarray}
\end{rem}

\begin{lem}\label{TT_12}
We have
\begin{eqnarray*}
T^{2}=0\quad \mbox{for any}\quad T\in\Lambd^{k}_r,\quad k=\left\lbrace
    \begin{array}{lll}
    \;[\frac{n}{2}]+1,\ldots, n&& \mbox{if $n=0,1\mod{4}$},
    \\
    \;[\frac{n}{2}],\ldots, n&&\mbox{if $n=2,3\mod{4}$}.
     \end{array}
    \right.
\end{eqnarray*}
\end{lem}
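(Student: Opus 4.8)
The plan is to compute $T^2$ directly from the defining relations (\ref{gen}) and then split into the cases of odd and even $k$. First I would recall that the generators $e_a$ with $a\in\{p+q+1,\ldots,n\}$ spanning $\Lambd_r$ are degenerate, i.e.\ $\eta_{aa}=0$, so by (\ref{gen}) they are nilpotent, $e_a^2=0$, and mutually anticommuting, $e_a e_b=-e_b e_a$ for $a\neq b$. Writing an arbitrary $T\in\Lambd^{k}_r$ as $T=\sum_A t_A e_A$, where $A$ runs over the ordered length-$k$ multi-indices with entries in $\{p+q+1,\ldots,n\}$ and $t_A\in\F$, bilinearity reduces the whole computation to understanding the products $e_A e_B$.

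Next I would establish three elementary facts: $e_A^2=0$ for every $A$ (a degenerate generator is repeated); $e_A e_B=0$ whenever $A\cap B\neq\varnothing$ (again a repeated generator); and, for disjoint $A,B$ of length $k$, commuting the $k$ generators of $e_B$ past the $k$ generators of $e_A$ produces $k^2$ sign changes, so that $e_A e_B=(-1)^{k^2}e_B e_A=(-1)^{k}e_B e_A$, using $k^2\equiv k\pmod 2$. Collecting the cross terms into unordered pairs then yields
\begin{eqnarray*}
T^2=(1+(-1)^k)\sum_{A\cap B=\varnothing,\; A\neq B} t_A t_B\, e_A e_B .
\end{eqnarray*}

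The argument now divides according to the parity of $k$. If $k$ is odd, the scalar prefactor $1+(-1)^k$ vanishes and $T^2=0$ with no further hypothesis. If $k$ is even, I would instead show that the summation index set is empty: two disjoint length-$k$ multi-indices would require $2k$ distinct degenerate generators, i.e.\ $2k\leq r$, whereas the stated range of $k$ together with $r\leq n$ forces $2k>n\geq r$. Hence no disjoint pair $\{A,B\}$ exists, the sum is empty, and again $T^2=0$.

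The part needing care—and the main obstacle—is the mod-$4$ bookkeeping that shows these two cases exactly cover the stated ranges, namely that the lower endpoint is odd (so the smallest admissible $k$ is handled by the sign cancellation) while every even $k$ in the range satisfies $2k>n$ (so it is handled by the ``too large to fit disjointly'' argument). Concretely, for $n\equiv 0,1\pmod 4$ the endpoint $[\frac{n}{2}]+1$ is odd and the smallest even $k$ in the range is $[\frac{n}{2}]+2$, giving $2k=n+4>n$ (respectively $2k=n+3>n$); for $n\equiv 2,3\pmod 4$ the endpoint $[\frac{n}{2}]$ is odd and the smallest even $k$ is $[\frac{n}{2}]+1$, giving $2k=n+2>n$ (respectively $2k=n+1>n$). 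This confirms that for every $k$ in the stated range one of the two cases applies and $T^2=0$.
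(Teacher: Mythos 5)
Your proof is correct, but it follows a genuinely different route from the paper's at the critical point. The paper splits the range by position: for $k\geq[\frac{n}{2}]+1$ it uses grade additivity (Remark \ref{rem_uv}) to get $T^2\in\Lambd^{2k}_r=\{0\}$ since $2k>n$, and for the boundary case $k=[\frac{n}{2}]$ (which only occurs when $n=2,3\mod 4$) it invokes the norm-function fact (\ref{norms_01_03}): on the one hand $T^2\in\Lambd^{n}_r$ or $\Lambd^{n-1}_r$, a subspace of quaternion type $\overline{2}$, while on the other hand $T^2=\pm\widetilde{T}T\in\C^{\overline{01}}_{p,q,r}$, and the two subspaces intersect trivially. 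You instead split by the parity of $k$: for odd $k$ you prove directly from the relations (\ref{gen}) that anticommutation of the degenerate generators forces the cross terms to cancel (the classical fact that odd-grade Grassmann elements square to zero), and for even $k$ in the range your counting argument $2k>n\geq r$ is the basis-level version of the paper's grade-additivity step. The paper's proof is shorter given the machinery already in place, and it showcases the quaternion-type structure that is the paper's central theme; your proof is self-contained, needs neither Remark \ref{rem_uv} nor (\ref{norms_01_03}), and in fact establishes the stronger statement that \emph{every} odd-grade element of $\Lambd_r$ squares to zero with no restriction on $k$, which makes transparent that the range hypothesis in the lemma is only doing work for even grades. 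Your mod-$4$ bookkeeping verifying that every even $k$ in the stated range satisfies $2k>n$ checks out in all four cases.
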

\begin{proof}
    Suppose $T\in\Lambda^{k}_r$; then $T^2\in\Lambda^{2k}_r$  (\ref{remark_klkl}). If $k\geq [\frac{n}{2}]+1$, then $2k>n$, and we get $T^2=0$. On the one hand, for $k=[\frac{n}{2}]$, we have $T^2\in\Lambda^n_r$ if $n=2\mod{4}$ and $T^2\in\Lambda^{n-1}_r$ if $n=3\mod{4}$ (\ref{remark_klkl}). On the other hand, $T^2=\pm\widetilde{T}T\in\C^{\overline{01}}_{p,q,r}$ by (\ref{norms_01_03}); therefore, $T^2=0$. 
\end{proof}

Let us use the following notation for any fixed set $\H\subseteq\C_{p,q,r}$ and its subset $\D\subseteq\H$. The set $\H$ can be represented as a direct sum of $\D$ and its orthogonal complement $\D^{\perp}_{\H}$:
\begin{eqnarray}
\H = \D\oplus \D^{\perp}_{\H},
\end{eqnarray}
where 
\begin{eqnarray}\label{def_orthcomp}
    \D^{\perp}_{\H}:= \{X\in \H:\quad \langle X\rangle_{\D}=0\}\subseteq \H.
\end{eqnarray}
For example, $\big({\Lambda^{(0)}_r}\big)_{\Z^{m}_{p,q,r}}^{\perp} := \{X\in \Z^{m}_{p,q,r}:\quad \langle X\rangle_{\Lambda^{(0)}_r}=0\}$.

\begin{lem}\label{lemma_mult_c3}
In the case $r\neq0$, we have
\begin{eqnarray}
\!\!\!\!\!\!\!\!\!\!\!\!\!\!\!\!\!\!\!\!\!\!\!\!\!&&\left(\big({\Lambda^{(0)}_r}\big)^{\perp}_{\Z_{p,q,r}}\right)^2=\{0\}\quad\mbox{if $n$ is odd};\label{f-1}
\\
\!\!\!\!\!\!\!\!\!\!\!\!\!\!\!\!\!\!\!\!\!\!\!\!\!&&\left(\big(\Lambda^{(0)}_r\big)^{\perp}_{\Z^{3}_{p,q,r}}\right)^2=\{0\}\quad\mbox{if} \;\; n=4,\;n\geq6;\label{f_0}
\\
\!\!\!\!\!\!\!\!\!\!\!\!\!\!\!\!\!\!\!\!\!\!\!\!\!&&\left(\big(\Lambd^{(0)}_{r}\big)^{\perp}_{\check{\Z}^2_{p,q,r}\cap\check{\Z}^3_{p,q,r}}\right)^2=\left(\big(\Lambd^{(0)}_{r}\big)^{\perp}_{\check{\Z}^1_{p,q,r}\cap\check{\Z}^2_{p,q,r}}\right)^2=\{0\}\quad\mbox{if}\;\;n\geq3;\label{f_3}
\\
\!\!\!\!\!\!\!\!\!\!\!\!\!\!\!\!\!\!\!\!\!\!\!\!\!&& \left(\big(\Lambd^{(0)}_{r}\big)^{\perp}_{{\Z}^2_{p,q,r}\cap{\Z}^3_{p,q,r}}\right)^2=\left(\big(\Lambd^{(0)}_{r}\big)^{\perp}_{{\Z}^3_{p,q,r}\cap\C^{(0)}_{p,q,r}}\right)^2=\{0\}\quad\mbox{if}\;\;n\geq4.\label{f_5}
\end{eqnarray}
\end{lem}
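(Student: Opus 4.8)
The plan is to reduce every one of the four identities to the multiplicative behaviour of the Grassmann degree in the $r$ null directions. Write each basis element of $\C_{p,q,r}$ in the factored form $e_Ae_Be_C$ of \eqref{rad}, where $e_C$ involves only the null generators $e_{p+q+1},\dots,e_n$, and call the integer $|C|$ its \emph{null degree}. Since generators with distinct indices anticommute, the product of two basis elements of null degrees $l_1$ and $l_2$ equals, up to sign, a factor lying in $\C_{p,q,0}$ times $e_Ce_{C'}\in\Lambda^{l_1+l_2}_r$; by Remark \ref{rem_uv} this is $0$ whenever $l_1+l_2>r$. Every centralizer in Lemmas \ref{cases_we_use}--\ref{centralizers_qt_ds} and in \eqref{cc2_cap_cc3}, \eqref{chCC1_chCC2}--\eqref{ch_cc2_ch_cc3} is a direct sum of pieces that are homogeneous both in grade and in null degree, and $\Lambda^{(0)}_r$ is always one of those pieces, so $\big(\Lambda^{(0)}_r\big)^{\perp}_{\H}$ is obtained simply by deleting that summand. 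Thus it suffices, in each case, to verify that any two basis terms surviving in the complement have null degrees with $l_1+l_2>r$: then all pairwise products, and in particular all squares, vanish.

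First I would read off the null degrees present after deleting $\Lambda^{(0)}_r$. In every case the surviving summands have the two shapes $\Lambda^{l}_r$ (pure null) and $\{\C^{c}_{p,q,0}\Lambda^{l}_r\}$ with $c\in\{1,2,p+q\}$, so each carries a well-defined null degree $l$, and I let $d$ be the smallest such $l$ among nonzero terms. Since all products then lie in $\Lambda^{\ge 2d}_r$, they vanish as soon as $2d>r$, and because $r\le n$ this holds for all large $n$. Concretely: for \eqref{f-1} the only surviving summand is $\C^n_{p,q,r}=\{\C^{p+q}_{p,q,0}\Lambda^{r}_r\}$ of null degree $r\ge1$, and $2r>r$; the pure-Grassmann complements occurring in \eqref{f_3} (namely $\Lambda^{n}_r$ or $\Lambda^{n-1}_r$) and in the even-$n$ part of \eqref{f_5} (namely $\Lambda^{n-1}_r$) satisfy $2l>n\ge r$ at once; and for \eqref{f_0}, \eqref{f_3}, \eqref{f_5} the estimates $d\ge n-3$ (odd $n$) and $d\ge n-2$ (even $n$) give $2d>r$ for odd $n\ge 7$ and even $n\ge 6$, covering the stated ranges apart from a few small $n$.

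The main obstacle is exactly these remaining boundary values, where a term of minimal null degree $d$ with $2d\le r$ formally appears; here I would invoke the second constraint built into the factored form, namely that a summand $\{\C^{c}_{p,q,0}\Lambda^{l}_r\}$ is nonzero only if $l\le r$ \emph{and} its Clifford grade satisfies $c\le p+q=n-r$. The offending low-degree terms always carry a Clifford factor of grade $c\ge1$ (grade $c=2$ in the worst cases), so their presence already forces $r\le n-1$ (respectively $r\le n-2$), and this is enough to return $l_1+l_2$ above $r$; in the opposite extreme, when $r$ is so large that the degree count fails, one has $r=n$, hence $p+q=0$ and $\C^{1}_{p,q,0}=\C^{2}_{p,q,0}=\{0\}$, so all Clifford-factor terms disappear and only a pure $\Lambda^{l}_r$ with large $l$ survives, whose square lies in $\Lambda^{2l}_r=\{0\}$. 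Running this dichotomy through the finitely many boundary pairs $(n,r)$ — $n=4$ for \eqref{f_0}, \eqref{f_3}, and \eqref{f_5}, together with $n=5$ for \eqref{f_5} — closes every case. Finally, the values excluded from the hypotheses (e.g. $n=5$ in \eqref{f_0} and $n\le 3$ throughout) are precisely those for which the complement already contains a term of null degree $0$ or $1$, such as $\{\C^{c}_{p,q,0}\Lambda^{0}_r\}=\C^{c}_{p,q,0}$, admitting nonzero products; this shows the stated ranges are sharp and no stronger claim is available.
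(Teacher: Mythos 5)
Your proof is correct and is essentially the paper's own argument: both read off the complements $\big(\Lambda^{(0)}_r\big)^{\perp}_{\H}$ from the explicit centralizer forms (Lemma \ref{cases_we_use} and the formulas (\ref{cc2_cap_cc3}), (\ref{chCC1_chCC2})--(\ref{ch_cc2_ch_cc3})) and kill every product by additivity of the null degree (Remark \ref{rem_uv}), with the extreme case $r=n$ (where $\C^{1}_{p,q,0}=\C^{2}_{p,q,0}=\{0\}$) treated separately. The only differences are organizational: the paper disposes of (\ref{f_3}) and (\ref{f_5}) for $n=4$, $n\geq6$ in one stroke via the inclusions of all four intersections into $\Z^{3}_{p,q,r}$, reducing to (\ref{f_0}), and then checks $n=3,5$ by hand, whereas you run a uniform minimal-null-degree estimate plus the dichotomy $r\leq n-c$ versus $r=n$; note only that your crude bound $d\geq n-3$ for odd $n$ would also flag the odd-$n$ sets of (\ref{f_3}) at $n=3,5$ as boundary cases missing from your list, though they pass at once since there $d=n-1$ and $2(n-1)>n\geq r$.
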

\begin{proof} We have (\ref{f-1}) because in the case of odd $n$, $\big(\Lambd^{(0)}_{r}\big)^{\perp}_{\Z_{p,q,r}}=\C^n_{p,q,r}$. 
    Let us prove (\ref{f_0}). Suppose $n\geq 4$ is even. Then by (\ref{remark_klkl}), 
    \begin{eqnarray}
        \!\!\!\!\!\!\!\!\!\!\!\!\!\!\!\!\!\!\!\!\!\!\!\!\left(\big(\Lambd^{(0)}_{r}\big)^{\perp}_{\Z^3_{p,q,r}}\right)^2&=&(\Lambda^{n-1}_r\oplus\{\C^1_{p,q,0}\Lambda_r^{\geq n-2}\}\oplus\{\C_{p,q,0}^2\Lambda_r^{n-2}\})^2
        \\
        &=&(\{\C^1_{p,q,0}\Lambda_r^{ n-2}\})^2\oplus(\{\C_{p,q,0}^2\Lambda_r^{n-2}\})^2=\{0\}.
    \end{eqnarray}
    Suppose $n\geq7$ is odd. Again by (\ref{remark_klkl}), 
    \begin{eqnarray*}
        \left(\big(\Lambd^{(0)}_{r}\big)^{\perp}_{\Z^3_{p,q,r}}\right)^2 \!\!\!\!\!&=& \!\!\!\!\!(\Lambd^{n-2}_r\oplus \{\C^{1}_{p,q,0}(\Lambd^{n-3}_r\oplus\Lambd^{n-2}_r)\}\oplus\{\C^{2}_{p,q,0}\Lambd^{n-3}_r\}\!\oplus\!\C^{n}_{p,q,r})^2
        \\
        &=& (\{\C^{1}_{p,q,0}(\Lambd^{n-3}_r\oplus\Lambd^{n-2}_r)\})^2\oplus(\{\C^{2}_{p,q,0}\Lambd^{n-3}_r\})^2= \{0\}.
    \end{eqnarray*}

    Now let us prove (\ref{f_3}) and (\ref{f_5}). Since $(\check{\Z}^2_{p,q,r}\cap\check{\Z}^3_{p,q,r})\subseteq\Z^3_{p,q,r}$ (\ref{ch_cc2_ch_cc3}), $(\check{\Z}^1_{p,q,r}\cap\check{\Z}^2_{p,q,r})\subseteq\Z^3_{p,q,r}$ (\ref{chCC1_chCC2}), $({\Z}^2_{p,q,r}\cap{\Z}^3_{p,q,r})\subseteq\Z^3_{p,q,r}$ (\ref{cc2_cap_cc3}), and $({\Z}^3_{p,q,r}\cap\C^{(0)}_{p,q,r})\subseteq\Z^3_{p,q,r}$ (\ref{chCC1_chCC3}), we automatically get the equalities (\ref{f_3}) and (\ref{f_5}) in the case $n=4$, $n\geq6$ and $r\neq0$ by (\ref{f_0}). Consider the case $n=3$ or $5$ and $r\neq 0$. We get by (\ref{remark_klkl}),
\begin{eqnarray}
\left(\big(\Lambd^{(0)}_{r}\big)^{\perp}_{\check{\Z}^2_{p,q,r}\cap\check{\Z}^3_{p,q,r}}\right)^2 =(\Lambda_r^{n}\oplus \{\C^1_{p,q,0}\Lambda^{n-1}_r\})^2 =\{0\}.\label{f_6}
\end{eqnarray} 
Using $\check{\Z}^1_{p,q,r}\cap\check{\Z}^2_{p,q,r}\subseteq\check{\Z}^2_{p,q,r}\cap\check{\Z}^3_{p,q,r}$ and (\ref{f_6}), we obtain (\ref{f_3}).
In the case $n=5$, we get by (\ref{remark_klkl}),
\begin{eqnarray}
\!\!\!\!\!\!\!\!\!\!\!\!\!\!\!\!\!\!&&\left(\big(\Lambd^{(0)}_{r}\big)^{\perp}_{{\Z}^2_{p,q,r}\cap{\Z}^3_{p,q,r}}\right)^2=(\Lambda^{3}_r\oplus\C^5_{p,q,r})^2
=\{0\},
\\
\!\!\!\!\!\!\!\!\!\!\!\!\!\!\!\!\!\!&&\left(\big(\Lambd^{(0)}_{r}\big)^{\perp}_{{\Z}^3_{p,q,r}\cap\C^{(0)}_{p,q,r}}\right)^2=(\{\C^{1}_{p,q,0}\Lambda^{3}_r\}\oplus\{\C^{2}_{p,q,0}\Lambda^{2}_r\})^2=\{0\},
\end{eqnarray}
and (\ref{f_5}) is proved.
\end{proof}

\begin{thm}\label{theorem_rel_A}
The groups $\A^{\overline{23}}_{p,q,r}$, $\A^{\overline{01}}_{p,q,r}$, $\check{\A}^{\overline{12}}_{p,q,r}$, and   $\check{\A}^{\overline{03}}_{p,q,r}$ are related in the following way in the degenerate geometric algebras $\C_{p,q,r}$, $r\neq0$:
\begin{eqnarray}
\!\!\!\!\!\!\!\!\!\!&\check{\A}^{\overline{03}}_{p,q,r}=\check{\A}_{p,q,r}({\Z}^3_{p,q,r}\cap\C^{(0)}_{p,q,r})^{\times},\label{l_a23_0}
\\
\!\!\!\!\!\!\!\!\!\!&\check{\A}^{\overline{12}}_{p,q,r}=\check{\A}_{p,q,r}(\check{\Z}^1_{p,q,r}\cap\check{\Z}^2_{p,q,r})^{\times},\quad \A^{\overline{23}}_{p,q,r}= \A^{\overline{01}}_{p,q,r}(\Z^2_{p,q,r}\cap\Z^3_{p,q,r})^{\times},\label{l_a23_1}
\\
\!\!\!\!\!\!\!\!\!\!& \A^{\overline{01}}_{p,q,r} = \check{\A}_{p,q,r}\Z^{\times}_{p,q,r}.\label{chA_l}
\end{eqnarray}
\end{thm}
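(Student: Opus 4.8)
The plan is to recognize all five groups involved---$\check{\A}_{p,q,r}$, $\A^{\overline{01}}_{p,q,r}$, $\A^{\overline{23}}_{p,q,r}$, $\check{\A}^{\overline{12}}_{p,q,r}$, $\check{\A}^{\overline{03}}_{p,q,r}$---as level sets of the norm $\psi$, that is, $\psi^{-1}(\H^{\times})=\{T\in\C^{\times}_{p,q,r}:\widetilde{T}T\in\H^{\times}\}$ for the subalgebras $\H=\Lambd^{(0)}_r$, $\Z_{p,q,r}$, $\Z^2_{p,q,r}\cap\Z^3_{p,q,r}$, $\check{\Z}^1_{p,q,r}\cap\check{\Z}^2_{p,q,r}$, and $\Z^3_{p,q,r}\cap\C^{(0)}_{p,q,r}$ respectively; this is exactly the defining form (\ref{def_A01}), (\ref{def_A23}), (\ref{def_chA12}), (\ref{def_chA03}), (\ref{def_chA}). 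Theorem \ref{lemma_AH} then computes each such level set as $\check{\A}_{p,q,r}\H^{\times}$, which is precisely the shape of the claimed identities. Concretely, I would apply Theorem \ref{lemma_AH} with $\H=\Z_{p,q,r}$ to obtain (\ref{chA_l}), with $\H=\check{\Z}^1_{p,q,r}\cap\check{\Z}^2_{p,q,r}$ to obtain the first identity in (\ref{l_a23_1}), and with $\H=\Z^3_{p,q,r}\cap\C^{(0)}_{p,q,r}$ to obtain (\ref{l_a23_0}).

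To invoke Theorem \ref{lemma_AH} I must verify its three hypotheses for each $\H$. That $\H$ is a linear space and a multiplicative monoid is immediate: ordinary centralizers and the even part $\C^{(0)}_{p,q,r}$ are subalgebras, and the twisted centralizers $\check{\Z}^m_{p,q,r}$ are closed under multiplication as well, since $\widehat{XY}V=\widehat{X}\widehat{Y}V=\widehat{X}VY=VXY$ for $X,Y\in\check{\Z}^m_{p,q,r}$; intersections of monoids are monoids. The inclusion $\Lambd^{(0)}_r\subseteq\H$ is read off the explicit descriptions in Lemma \ref{cases_we_use} and formulas (\ref{cc2_cap_cc3}), (\ref{chCC1_chCC2}), (\ref{chCC1_chCC3}). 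The two remaining conditions---$\H\subseteq\C^0\oplus\rad\C_{p,q,r}$ and the vanishing of the square of every element of the complement $\big(\Lambd^{(0)}_r\big)^{\perp}_{\H}$---are supplied (for $r\neq0$) by those same explicit forms together with Lemma \ref{lemma_mult_c3}: equation (\ref{f-1}) handles $\H=\Z_{p,q,r}$ for odd $n$ (for even $n$ one has $\Z_{p,q,r}=\Lambd^{(0)}_r$ and nothing to check), (\ref{f_3}) handles $\check{\Z}^1_{p,q,r}\cap\check{\Z}^2_{p,q,r}$ for $n\geq3$, and (\ref{f_5}) handles both $\Z^2_{p,q,r}\cap\Z^3_{p,q,r}$ and $\Z^3_{p,q,r}\cap\C^{(0)}_{p,q,r}$ for $n\geq4$; the few smaller $n$ are covered directly, since the relevant complements then consist of null or top-grade blades whose squares vanish elementwise.

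For the second identity in (\ref{l_a23_1}), whose right factor is $\A^{\overline{01}}_{p,q,r}$ rather than $\check{\A}_{p,q,r}$, I would first establish (\ref{chA_l}) as above and then reduce to the treated case. Since $\Z_{p,q,r}=\Z^1_{p,q,r}$ is central it lies in the monoid $\Z^2_{p,q,r}\cap\Z^3_{p,q,r}$, so $\Z^{\times}_{p,q,r}(\Z^2_{p,q,r}\cap\Z^3_{p,q,r})^{\times}=(\Z^2_{p,q,r}\cap\Z^3_{p,q,r})^{\times}$, whence
\[
\A^{\overline{01}}_{p,q,r}(\Z^2_{p,q,r}\cap\Z^3_{p,q,r})^{\times}=\check{\A}_{p,q,r}\Z^{\times}_{p,q,r}(\Z^2_{p,q,r}\cap\Z^3_{p,q,r})^{\times}=\check{\A}_{p,q,r}(\Z^2_{p,q,r}\cap\Z^3_{p,q,r})^{\times}=\A^{\overline{23}}_{p,q,r},
\]
the last equality being Theorem \ref{lemma_AH} applied to $\H=\Z^2_{p,q,r}\cap\Z^3_{p,q,r}$.

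The main obstacle is that the hypothesis $\H\subseteq\C^0\oplus\rad\C_{p,q,r}$ of Theorem \ref{lemma_AH} can fail in low dimension, outside the ranges covered by Lemma \ref{lemma_mult_c3}. The sharpest case is (\ref{l_a23_0}) with $n=3$ and $r=1$: here $\Z^3_{p,q,r}\cap\C^{(0)}_{p,q,r}$ contains the non-nilpotent bivector subspace $\C^2_{p,q,0}$, so that neither $\H\subseteq\C^0\oplus\rad\C_{p,q,r}$ nor the square-zero condition holds, and Theorem \ref{lemma_AH} is not directly available. I would treat this (and the analogous small $n$) by the coincidences (\ref{chA_c_1})--(\ref{chA_c_2}): for $n\equiv3\bmod4$ one has $\check{\A}^{\overline{03}}_{p,q,r}=\check{\A}_{p,q,r}$, so it suffices to show $(\Z^3_{p,q,r}\cap\C^{(0)}_{p,q,r})^{\times}\subseteq\check{\A}_{p,q,r}$; and indeed, because $\widetilde{Y}Y\in\C^{\overline{01}}_{p,q,r}$ for every $Y$ by (\ref{norms_01_03}), the type-$\overline{2}$ (grade-$2$) contributions to $\widetilde{Y}Y$ cancel, leaving $\widetilde{Y}Y\in\Lambd^{(0)\times}_r$, i.e. $Y\in\check{\A}_{p,q,r}$. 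Since $\check{\A}_{p,q,r}$ is a group, this gives $\check{\A}_{p,q,r}(\Z^3_{p,q,r}\cap\C^{(0)}_{p,q,r})^{\times}=\check{\A}_{p,q,r}=\check{\A}^{\overline{03}}_{p,q,r}$, completing the argument in the obstructed range. I expect this reconciliation of the low-dimensional cases with the generic Theorem \ref{lemma_AH} argument to be the only genuinely delicate point.
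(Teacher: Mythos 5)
Your proposal is correct and follows essentially the same route as the paper's own proof: both recognize all the groups as $\psi$-preimages of suitable monoids $\H$, invoke Theorem \ref{lemma_AH} wherever Lemma \ref{lemma_mult_c3} (plus the elementary $n=1,2$ checks) supplies the radical and square-zero hypotheses, and fall back on the coincidences (\ref{chA_c_1})--(\ref{chA_c_2}) for the genuinely obstructed case $\H=\Z^3_{p,q,r}\cap\C^{(0)}_{p,q,r}$ with $n\leq 3$, where $\C^2_{p,q,0}\subseteq\H$ violates $\H\subseteq\C^0\oplus\rad\C_{p,q,r}$. If anything, you are slightly more explicit than the paper at two points where it is terse: the absorption $\Z^{\times}_{p,q,r}(\Z^2_{p,q,r}\cap\Z^3_{p,q,r})^{\times}=(\Z^2_{p,q,r}\cap\Z^3_{p,q,r})^{\times}$ needed to turn the output $\check{\A}_{p,q,r}(\Z^2_{p,q,r}\cap\Z^3_{p,q,r})^{\times}$ of Theorem \ref{lemma_AH} into the stated factor $\A^{\overline{01}}_{p,q,r}(\Z^2_{p,q,r}\cap\Z^3_{p,q,r})^{\times}$, and the direct verification $(\Z^3_{p,q,r}\cap\C^{(0)}_{p,q,r})^{\times}\subseteq\check{\A}_{p,q,r}$ for $n=3$ (the paper instead proves the reverse-style inclusion $\check{\A}_{p,q,r}(\Z^3_{p,q,r}\cap\C^{(0)}_{p,q,r})^{\times}\subseteq\check{\A}^{\overline{03}}_{p,q,r}$, which amounts to the same thing).
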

\begin{proof} 
    Let us prove (\ref{l_a23_0}).
    If $n\leq3$, then $\check{\A}^{\overline{03}}_{p,q,r}=\check{\A}_{p,q,r}$ by (\ref{chCC1_chCC3}) and (\ref{chA_c_2}). 
    We have $\check{\A}_{p,q,r}({\Z}^3_{p,q,r}\cap\C^{(0)}_{p,q,r})^{\times}\subseteq\check{\A}^{\overline{03}}_{p,q,r}$ because $\widetilde{(TY)}TY = \widetilde{Y}\widetilde{T}TY\in({\Z}^3_{p,q,r}\cap\C^{(0)}_{p,q,r})^{\times}$ for any $T\in\check{\A}_{p,q,r}$ and $Y\in({\Z}^3_{p,q,r}\cap\C^{(0)}_{p,q,r})^{\times}$ using $\widetilde{T}T\in\Lambda^{(0)\times}_r\subseteq ({\Z}^3_{p,q,r}\cap\C^{(0)}_{p,q,r})^{\times}$.
    In the case $n\geq4$, we have $(W-\langle W\rangle_{\Lambda^{(0)}_r})^2=0$ for any $W\in{\Z}^3_{p,q,r}\cap\C^{(0)}_{p,q,r}$ by Lemma \ref{lemma_mult_c3} and $\Lambda^{(0)}_r\subseteq({\Z}^3_{p,q,r}\cap\C^{(0)}_{p,q,r})\subseteq\C^0\oplus\rad\C_{p,q,r}$. Thus, by Theorem \ref{lemma_AH},
    \begin{eqnarray*}
        \check{\A}^{\overline{03}}_{p,q,r} =\{T\in\C^{\times}_{p,q,r}:\; \widetilde{T}T\in({\Z}^3_{p,q,r}\cap\C^{(0)}_{p,q,r})^{\times}\}=\check{\A}_{p,q,r}({\Z}^3_{p,q,r}\cap\C^{(0)}_{p,q,r})^{\times}.
    \end{eqnarray*}
    
    Now we prove (\ref{l_a23_1}). Note that 
    $\Lambda^{(0)}_r\subseteq(\check{\Z}^1_{p,q,r}\cap\check{\Z}^2_{p,q,r})\subseteq({\Z}^2_{p,q,r}\cap\Z^3_{p,q,r})\subseteq\C^0\oplus\rad\C_{p,q,r}$.
    If $n=1,2$, then $\big(\Lambd^{(0)}_{r}\big)^{\perp}_{\check{\Z}^1_{p,q,r}\cap\check{\Z}^2_{p,q,r}}=\big(\Lambd^{(0)}_{r}\big)^{\perp}_{{\Z}^2_{p,q,r}\cap{\Z}^3_{p,q,r}}=\Lambda^1_r$ and $W^2=0$ for any $W\in\Lambda^1_r$ by Lemma \ref{TT_12}.
    In the case $n\geq 3$, we have $W^2=0$ for any $W\in\big(\Lambd^{(0)}_{r}\big)^{\perp}_{\check{\Z}^1_{p,q,r}\cap\check{\Z}^2_{p,q,r}}$ or $W\in \big(\Lambd^{(0)}_{r}\big)^{\perp}_{{\Z}^2_{p,q,r}\cap{\Z}^3_{p,q,r}}$ by Lemma \ref{lemma_mult_c3}. Thus, by Theorem \ref{lemma_AH},
    \begin{eqnarray*}
         \!\!\!\!\! \!\!\!\!\!\!\!\!\!\!&&\check{\A}^{\overline{12}}_{p,q,r} =\{T\in\C^{\times}_{p,q,r}:\; \widetilde{T}T\in(\check{\Z}^1_{p,q,r}\cap\check{\Z}^2_{p,q,r})^{\times}\}=\check{\A}_{p,q,r}(\check{\Z}^1_{p,q,r}\cap\check{\Z}^2_{p,q,r})^{\times},
        \\
         \!\!\!\!\! \!\!\!\!\!\!\!\!\!\!&&{\A}^{\overline{23}}_{p,q,r} =\{T\in\C^{\times}_{p,q,r}:\; \widetilde{T}T\in({\Z}^2_{p,q,r}\cap{\Z}^3_{p,q,r})^{\times}\}=\check{\A}_{p,q,r}({\Z}^2_{p,q,r}\cap{\Z}^3_{p,q,r})^{\times}.
    \end{eqnarray*}

    Finally, we have (\ref{chA_l}) because in the case of odd $n$, we have $W^2=0$ for any $W\in\big(\Lambd^{(0)}_{r}\big)^{\perp}_{\Z_{p,q,r}}$ by Lemma \ref{lemma_mult_c3} and in the case of even $n$, $\big(\Lambd^{(0)}_{r}\big)^{\perp}_{\Z_{p,q,r}}=\varnothing$, so we can apply Theorem \ref{lemma_AH}:
    \begin{eqnarray}
        \check{\A}^{\overline{01}}_{p,q,r} =\{T\in\C^{\times}_{p,q,r}:\; \widetilde{T}T\in\Z^{\times}_{p,q,r}\}=\check{\A}_{p,q,r}\Z^{\times}_{p,q,r},
    \end{eqnarray}
    and the proof is completed.
\end{proof}

\begin{thm}\label{theorem_rel_B}
    We have the following relations between the groups in the degenerate geometric algebras $\C_{p,q,r}$, $r\neq0$:
    \begin{eqnarray}
        &\B^{\overline{12}}_{p,q,r} = \check{\B}_{p,q,r}\Z^{\times}_{p,q,r},\label{chB_l}
        \\
    &
    \check{\B}^{\overline{23}}_{p,q,r}=\check{\B}_{p,q,r}(\check{\Z}^2_{p,q,r}\cap\check{\Z}^3_{p,q,r})^{\times},\qquad \B^{\overline{03}}_{p,q,r}=\check{\B}_{p,q,r}\Z^{3\times}_{p,q,r}.\label{mult_B_1}
    \end{eqnarray}
\end{thm}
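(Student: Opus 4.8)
The plan is to establish all three identities by the same mechanism used for the type-$\A$ families in Theorem~\ref{theorem_rel_A}, but now with the norm $\chi$ and the second identity~(\ref{lem_chB_}) of Theorem~\ref{lemma_AH} in place of $\psi$ and~(\ref{lem_chA_}). First I rewrite each left-hand group as a preimage $\chi^{-1}(\H^{\times})$ of a suitable monoid $\H$, using the equivalent descriptions already proved: by Theorem~\ref{theorem_AB}, $\B^{\overline{12}}_{p,q,r}=\chi^{-1}(\Z^{1\times}_{p,q,r})$ with $\Z^1_{p,q,r}=\Z_{p,q,r}$ and $\B^{\overline{03}}_{p,q,r}=\chi^{-1}(\Z^{3\times}_{p,q,r})$, and by Theorem~\ref{theorem_chAB}, $\check{\B}^{\overline{23}}_{p,q,r}=\chi^{-1}((\check{\Z}^2_{p,q,r}\cap\check{\Z}^3_{p,q,r})^{\times})$. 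The three assertions then become the three instances $\H=\Z_{p,q,r}$, $\H=\check{\Z}^2_{p,q,r}\cap\check{\Z}^3_{p,q,r}$ and $\H=\Z^3_{p,q,r}$ of the single statement $\chi^{-1}(\H^{\times})=\check{\B}_{p,q,r}\H^{\times}$, which is precisely~(\ref{lem_chB_}); recall also $\check{\B}_{p,q,r}=\check{\B}^{\overline{01}}_{p,q,r}$.

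It remains to verify, for each of these $\H$, the three hypotheses of Theorem~\ref{lemma_AH}. That $\H$ is a linear space and a multiplicative monoid is clear for $\Z_{p,q,r}$ and $\Z^3_{p,q,r}$, since the centralizer of any subset is a subalgebra containing $e$; for $\check{\Z}^2_{p,q,r}\cap\check{\Z}^3_{p,q,r}$ I read closure off the explicit form~(\ref{ch_cc2_ch_cc3}) with the help of Remark~\ref{rem_uv}. The chain $\Lambda^{(0)}_r\subseteq\H\subseteq\C^0\oplus\rad\C_{p,q,r}$ follows from~(\ref{Zpqr}),~(\ref{cc_3}) and~(\ref{ch_cc2_ch_cc3}): the lower inclusion is visible directly, and for the upper one I use $r\neq0$, so that every summand other than the scalars carries at least one degenerate generator and hence lies in $\rad\C_{p,q,r}$. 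The squaring hypothesis $(W-\langle W\rangle_{\Lambda^{(0)}_r})^2=0$ for all $W\in\H$ is exactly the content of Lemma~\ref{lemma_mult_c3}: (\ref{f-1}) covers $\H=\Z_{p,q,r}$ for odd $n$ (for even $n$ the complement is trivial), (\ref{f_3}) covers $\H=\check{\Z}^2_{p,q,r}\cap\check{\Z}^3_{p,q,r}$ for $n\geq3$, and (\ref{f_0}) covers $\H=\Z^3_{p,q,r}$ for $n=4$ and $n\geq6$. With all three hypotheses in force,~(\ref{lem_chB_}) yields the stated factorizations. The few small values of $n$ left aside — where the upper inclusion can fail because a summand $\{\C^1_{p,q,0}\Lambda^0_r\}=\C^1_{p,q,0}$ or $\{\C^2_{p,q,0}\Lambda^0_r\}=\C^2_{p,q,0}$ appears — are dispatched by inspection: there $\H$ degenerates so far that the group collapses onto $\check{\B}_{p,q,r}$ or onto all of $\C^{\times}_{p,q,r}$ and the identity is immediate.

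The one genuinely resistant case is $\H=\Z^3_{p,q,r}$ with $n=5$, the value deliberately excluded from~(\ref{f_0}): here $\big(\Lambda^{(0)}_r\big)^{\perp}_{\Z^3_{p,q,r}}$ contains $\{\C^1_{p,q,0}\Lambda^2_r\}$, whose square already reaches $\Lambda^4_r$ and need not vanish (for instance in $\C_{1,0,4}$), so the squaring hypothesis fails and the one-step construction of $Y$ in Theorem~\ref{lemma_AH} is unavailable. I expect this to be the main obstacle. To get around it I would exploit the extra information~(\ref{norms_01_03}) that $\chi(T)=\widehat{\widetilde{T}}T\in\C^{\overline{03}}_{p,q,r}$, so that $W=\chi(T)-\langle\chi(T)\rangle_{\Lambda^{(0)}_r}$ is a nilpotent element of $\rad\C_{p,q,r}\cap\Z^3_{p,q,r}$, and replace the single transformation $Y=e+\frac{1}{2}(\alpha e+X)^{-1}W$ by an iteration of such maps. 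Each factor again lies in $\Z^{3\times}_{p,q,r}$; because $\alpha e+X\in\Lambda^{(0)}_r$ is central (Remark~\ref{remark_commute_L0}) the terms linear in $W$ cancel, so one step leaves only a remainder quadratic in $W$, of strictly higher radical filtration, and the nilpotency of $\rad\C_{p,q,r}$ forces termination, producing $Y\in\Z^{3\times}_{p,q,r}$ with $TY^{-1}\in\check{\B}_{p,q,r}$. The reverse inclusion needs no such care: for $T\in\check{\B}_{p,q,r}$ and $Y\in\Z^{3\times}_{p,q,r}$ one has $\chi(TY)=(\widehat{\widetilde{T}}T)\,\chi(Y)$ by the centrality of $\widehat{\widetilde{T}}T\in\Lambda^{(0)\times}_r$, and $\chi(Y)\in\Z^{3\times}_{p,q,r}$ because $\widehat{\widetilde{Y}}\in\Z^3_{p,q,r}$ whenever $Y\in\Z^3_{p,q,r}$, the subspace $\C^3_{p,q,r}$ being invariant under the Clifford conjugation.
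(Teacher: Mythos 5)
Your proposal is correct, and for (\ref{chB_l}), for the first identity in (\ref{mult_B_1}), and for $\B^{\overline{03}}_{p,q,r}$ with $n\neq 5$ it is essentially the paper's own proof: rewrite each group as $\chi^{-1}(\H^{\times})$, verify the hypotheses of Theorem \ref{lemma_AH} via Lemma \ref{lemma_mult_c3}, and settle the small values of $n$ by inspection (the paper does $n=1,2$ via (\ref{norms_01_03}), which forces all the groups to be $\C^{\times}_{p,q,r}$, and $n=3$ via $\Z^{3}_{p,q,r}=\C_{p,q,r}$, exactly the collapses you describe). The one genuine divergence is the exceptional case $n=5$ of $\B^{\overline{03}}_{p,q,r}=\check{\B}_{p,q,r}\Z^{3\times}_{p,q,r}$, which you correctly isolated as the crux, and your counterexample to the squaring hypothesis in $\C_{1,0,4}$ is right. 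Here the paper does not iterate: it performs the single correction $Y=e+\frac{1}{2\alpha}W\in\Z^{3\times}_{p,q,r}$ and uses grade bookkeeping specific to $n=5$ (namely $W^{3}=W^{2}X=XW^{2}=WXW=0$, $W^{2}\in\Lambda^{4}_{r}\oplus\{\C^{1}_{p,q,0}\Lambda^{4}_{r}\}\subseteq\Lambda^{(0)}_{r}\oplus\C^{5}_{p,q,r}$, and $WX,XW\in\C^{5}_{p,q,r}$) to conclude that the corrected norm lands in $\ker(\ad)=\Z^{\times}_{p,q,r}$ — not in $\Lambda^{(0)\times}_{r}$ — so that $TY^{-1}\in\B^{\overline{12}}_{p,q,r}=\check{\B}_{p,q,r}\Z^{\times}_{p,q,r}$ by the already-proved (\ref{chB_l}), and the factorization through $\Z^{3\times}_{p,q,r}$ follows since $\Z^{\times}_{p,q,r}Y\subseteq\Z^{3\times}_{p,q,r}$. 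Your alternative — iterating the correction, with the linear terms cancelling by centrality of $\alpha e+X$, each step pushing the error into a strictly higher power of $\rad\C_{p,q,r}$, and nilpotency of the radical forcing termination at a norm in $\Lambda^{(0)\times}_{r}$ — is also valid: each factor stays in $\Z^{3\times}_{p,q,r}$ and is fixed by Clifford conjugation (its defect lies in $\C^{\overline{03}}_{p,q,r}$ by (\ref{norms_01_03})), and $\Z^{3}_{p,q,r}$ is a unital subalgebra whose invertible elements form a group, so the decomposition can be repeated; in fact for $n=5$ your iteration stops after two steps, since the first remainder lies in $\C^{5}_{p,q,r}$ and squares to zero when $r\neq 0$. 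Your route is more uniform, avoiding the $n=5$-specific computations and proving the slightly stronger statement that the residual norm reaches $\Lambda^{(0)\times}_{r}$ itself rather than merely the center; the paper's route is shorter because it leans on (\ref{chB_l}), which is available by the time this case arises.
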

\begin{proof}
We have (\ref{chB_l}) because in the case of odd $n$, $W^2=0$ for any $W\in\big(\Lambd^{(0)}_{r}\big)^{\perp}_{\Z_{p,q,r}}$ by Lemma \ref{lemma_mult_c3} and in the case of even $n$, $\big(\Lambd^{(0)}_{r}\big)^{\perp}_{\Z_{p,q,r}}=\varnothing$, so we can apply Theorem \ref{lemma_AH}:
    \begin{eqnarray}
        {\B}^{\overline{12}}_{p,q,r} =\{T\in\C^{\times}_{p,q,r}:\; \widehat{\widetilde{T}}T\in\Z^{\times}_{p,q,r}\}=\check{\B}_{p,q,r}\Z^{\times}_{p,q,r},
    \end{eqnarray}
    and the proof is completed.

Let us prove (\ref{mult_B_1}).
If $n=1,2$, then  $\check{\B}^{\overline{23}}_{p,q,r}=\B^{\overline{03}}_{p,q,r}=\check{\B}_{p,q,r}=\C^{\times}_{p,q,r}$, since $\widehat{\widetilde{T}}T\in\C^{\overline{03}\times}_{p,q,r}=\C^{0\times}$ for any $T\in\C^{\times}_{p,q,r}$ by (\ref{norms_01_03}). 
In the case $n\geq 3$, we have $(W-\langle W\rangle_{\Lambda^{(0)}_r})^2=0$ for any $W\in\check{\Z}^2_{p,q,r}\cap\check{\Z}^3_{p,q,r}$ by Lemma \ref{lemma_mult_c3} and $\Lambda^{(0)}_r\subseteq(\check{\Z}^2_{p,q,r}\cap\check{\Z}^3_{p,q,r})\subseteq\C^0\oplus\rad\C_{p,q,r}$. Thus, applying Theorem \ref{lemma_AH}, we get
\begin{eqnarray*}
\check{\B}^{\overline{23}}_{p,q,r} = \{T\in\C^{\times}_{p,q,r}:\quad \widehat{\widetilde{T}}T\in(\check{\Z}^2_{p,q,r}\cap\check{\Z}^3_{p,q,r})^{\times}\}=\check{\B}_{p,q,r}(\check{\Z}^2_{p,q,r}\cap\check{\Z}^3_{p,q,r}),
\end{eqnarray*}
and the proof of the first equality in (\ref{mult_B_1}) is completed.
If  $n=3$, then $\B^{\overline{03}}_{p,q,r}=\Z^{3\times}_{p,q,r}=\C^{\times}_{p,q,r}$ (\ref{cc_3}). 
In the case $n=4$ and $n\geq 6$, $(W-\langle W\rangle_{\Lambda^{(0)}_r})^2=0$ for any $W\in{\Z}^{3}_{p,q,r}$ by Lemma \ref{lemma_mult_c3} and $\Lambda^{(0)}_r\subseteq\Z^{3}_{p,q,r}\subseteq\C^0\oplus\rad\C_{p,q,r}$, so, by Theorem \ref{lemma_AH},
\begin{eqnarray*}
{\B}^{\overline{03}}_{p,q,r} = \{T\in\C^{\times}_{p,q,r}:\quad \widehat{\widetilde{T}}T\in\Z^{3\times}_{p,q,r}\}=\check{\B}_{p,q,r}\Z^{3\times}_{p,q,r}.
\end{eqnarray*}

Consider the case $n=5$. Suppose $T\in\B^{\overline{03}}_{p,q,r}$; then $\widehat{\widetilde{T}}T=\alpha e +X+W$, where $\alpha\in\F$, $X\in\big(\C^0\big)^{\perp}_{\Lambd^{(0)}_{r}}$, and  $W\in\big(\Lambd^{(0)}_{r}\big)^{\perp}_{\Z^3_{p,q,r}}$. Note that $\alpha \neq 0$ by Lemma \ref{lemma_inv_rad}, since $\widetilde{T}T\in(\C^0\oplus\rad\C_{p,q,r})^{\times}$. 
We need to show that there exists such $Y\in\Z^{3\times}_{p,q,r}$ that $TY^{-1}\in\check{\B}_{p,q,r}$. 
Consider $Y=e+\frac{1}{2\alpha}W\in\Z^{3\times}_{p,q,r}$. 
We have $Y^{-1}=e-\frac{1}{2\alpha}W+\frac{1}{4\alpha^2}W^2$, since $W^3=0$ (see (\ref{cc_3})). We get
\begin{eqnarray}
    \widehat{\widetilde{(TY^{-1})}}(TY^{-1})=\alpha e+X-\frac{1}{4\alpha}W^2-\frac{1}{2\alpha}(WX+XW)\in\ker(\ad),
\end{eqnarray}
where we use $W^3=W^2 X=XW^2=WXW=0$, $W^2\in \Lambda^4_r\oplus\{\C^1_{p,q,0}\Lambda^4_r\}\subseteq\Lambda^{(0)}_r\oplus\C^5_{p,q,r}$, and $WX,XW\in\Lambda^5_r\oplus\{\C^1_{p,q,0}\Lambda^4_r\}\subseteq\C^5_{p,q,r}$, and this completes the proof.
\end{proof}

\section{Examples of the considered groups}\label{section_examples}

This section considers examples of the groups studied in Section \ref{section_ds_qt} in the particular cases of the geometric algebras $\C_{p,q,1}$ and Grassmann algebras $\C_{0,0,n}=\Lambda_n$. In these examples, we use the relations between the groups (Theorems \ref{theorem_rel_A} and \ref{theorem_rel_B}) and the explicit forms of the centralizers and twisted centralizers (Lemma \ref{cases_we_use}). Note that the particular case of the non-degenerate geometric algebras $\C_{p,q,0}$ is considered in Section \ref{section_ds_qt} (see Remark \ref{remark_tildeQ}, the formulas (\ref{AB_pq0_1})--(\ref{AB_pq0_2}), (\ref{chAB_pq0_1})--(\ref{chAB_pq0_2}), and the notes above them).

\begin{ex} In this example, we consider the groups in the special case of the degenerate geometric algebras $\C_{p,q,1}$ (with $r=1$). An important particular case of the algebras of this type is plane-based geometric algebra (PGA) $\C_{p,0,1}$, which has various applications in computer graphics and vision, robotics, motion capture, dynamics simulation, etc. \cite{pga}. The algebra $\C_{3,0,1}$, which is also called the Galilei--Clifford algebra, is important for the consideration of the Galilei group related to spin groups ~\cite{Abl,br1}.
In any $\C_{p,q,1}$, the groups of the type $\B$ have the form:
    \begin{eqnarray*}
    \check{\B}^{\overline{01}}_{p,q,1} &=& \{T\in\C^\times_{p,q,1}:\quad \widehat{\widetilde{T}}T\in\C^{0\times}\},
    \\
    \check{\B}^{\overline{23}}_{p,q,1} &=& \left\lbrace
                \begin{array}{lll}
                \C^{\times}_{p,q,1}, && n=1,2,
                \\
                \check{\B}^{\overline{01}}_{p,q,1}, &&n\geq 3;
                \end{array}
                \right.
    \\
    \B^{\overline{12}}_{p,q,1}&=&
    \left\lbrace
                \begin{array}{lll}
                \check{\B}^{\overline{01}}_{p,q,1}(\C^0\oplus\C^n_{p,q,1})^{\times},&& n=3\mod{4},
                \\
                \check{\B}^{\overline{01}}_{p,q,1}, && n=0,1,2\mod{4};
            \end{array}
    \right.
    \\
    \B^{\overline{03}}_{p,q,1} &=& 
     \left\lbrace
                \begin{array}{lll}
                \B^{\overline{12}}_{p,q,1}(\C^0\oplus\C^{n}_{p,q,1})^{\times},&&\mbox{$n$ is odd},
                \\
                \B^{\overline{12}}_{p,q,1},&&\mbox{$n$ is even};
                \end{array}
                \right.
    \\
    &=&
    \left\lbrace
                \begin{array}{lll}
                \check{\B}^{\overline{01}}_{p,q,1}(\C^0\oplus\C^{n}_{p,q,1})^{\times},&&n=3\mod{4},
                \\
                \check{\B}^{\overline{01}}_{p,q,1},&&n=0,1,2\mod{4}.
                \end{array}
                \right.
    \end{eqnarray*}
   Consider the particular case of the geometric algebra $\C_{p,q,1}$ with $p+q=2$, $n=3$. An arbitrary element $T\in\check{\B}^{\overline{01}}_{p,q,1}$ has the form
    \begin{eqnarray}\label{t_form}
    T = ue+ u_1e_1 +u_2e_2+u_3e_3+u_{12}e_{12}+u_{13}e_{13}+u_{23}e_{23}+u_{123}e_{123}
    \end{eqnarray}
    with the following conditions on the coefficients $u,u_1,\ldots,u_{123}\in\F$:
    \begin{eqnarray*}
        u u_{123}+u_2u_{13}-u_1u_{23}-u_3u_{12}=0,\qquad u^2-u_1^2\eta_{11}-u_2^2\eta_{22}+u_{12}\eta_{11}\eta_{22}\neq0.
    \end{eqnarray*}

In any $\C_{p,q,1}$, the groups of the type $\A$ have the form:
    \begin{eqnarray}
        \check{\A}_{p,q,1} &=& \{T\in\C^{\times}_{p,q,1}:\quad \widetilde{T}T\in\C^{0\times}\},
        \\
        \check{\A}^{\overline{12}}_{p,q,1} &=& 
        \left\lbrace
                \begin{array}{lll}
                \C^{\times}_{0,0,1},&& n=1,
                \\
                \check{\A}_{p,q,1}(\C^0\oplus\Lambda^1_1)^{\times},&&n=2,
                \\
                \check{\A}_{p,q,1},&& n\geq3;
                \end{array}
                \right.
        \\
        \check{\A}^{\overline{03}}_{p,q,1} &=& 
        \left\lbrace
                \begin{array}{lll}
                \check{\A}_{p,q,1}(\C^0\oplus\C^{2}_{p,q,1})^{\times}, &&n=2,3,
                \\
                \check{\A}_{p,q,1}, &&n\neq 2,3;
                \end{array}
                \right.
        \\
        \A^{\overline{01}}_{p,q,1} &=& 
        \left\lbrace
                \begin{array}{lll}
                \check{\A}_{p,q,1}(\C^0\oplus\C^n_{p,q,1})^{\times},&&n=1\mod{4},
                \\
                \check{\A}_{p,q,1}, &&n=0,2,3\mod{4};
                \end{array}
                \right.
        \\
        \A^{\overline{23}}_{p,q,1} &=& 
        \left\lbrace
                \begin{array}{lll}
                \A^{\overline{01}}_{p,q,1}(\C^0\oplus\Lambda_1^1)^{\times},&&n=2,
                \\
                \A^{\overline{01}}_{p,q,1}(\C^0\oplus\Lambda_1^1\oplus\C^3_{p,q,1})^{\times},&& n=3,
                \\
                \A^{\overline{01}}_{p,q,1}(\C^0\oplus\C^{n}_{p,q,1})^{\times}, &&\mbox{$n\neq3$ is odd},
                \\
                \A^{\overline{01}}_{p,q,1},&& \mbox{$n\neq2$ is even}.
                \end{array}
                \right.
    \end{eqnarray}
    Consider the particular case of $\C_{p,q,1}$ with $p+q=2$, $n=3$. An arbitrary element $T\in\check{\A}_{p,q,1}$ of the form (\ref{t_form}) has the following conditions on the coefficients $u,u_1,\ldots,u_{123}\in\F$:
    \begin{eqnarray*}
    &u u_1 - u_{2}u_{12}\eta_{22}=0,\qquad u u_2+u_1u_{12}\eta_{11}=0,
    \\
    &uu_{3}+u_1u_{13}\eta_{11}+u_{2}u_{23}\eta_{22}=0,\qquad
    u^2+u_1^2\eta_{11}+u_2^2\eta_{22}+u_{12}^2\eta_{11}\eta_{22}\neq0.
    \end{eqnarray*}
    
\end{ex}

\begin{ex}
In this example, we consider the groups in the special case of the Grassmann algebra $\C_{0,0,n}=\Lambda_n$. We have
\begin{align}
    \check{\B}^{\overline{01}}_{0,0,n} &= \{T\in\Lambda^{\times}_n:\quad \widehat{\widetilde{T}}T\in\Lambda^{(0)\times}_n\},
    \\
    \check{\B}^{\overline{23}}_{0,0,n} &=
    \left\lbrace
    \begin{array}{lll}
    \check{\B}^{\overline{01}}_{0,0,n}(\Lambda^{(0)}_n\oplus\Lambda^{n}_n)^\times,&&\mbox{$n$ is odd},
    \\
    \check{\B}^{\overline{01}}_{0,0,n}(\Lambda^{(0)}_n\oplus\Lambda^{n-1}_n)^\times,&&\mbox{$n$ is even};
    \end{array}
    \right.
    \\
    \B^{\overline{12}}_{0,0,n}&=
    \left\lbrace
    \begin{array}{lll}
    {\check{\B}}^{\overline{01}}_{0,0,n}(\Lambda^{(0)}_n\oplus\Lambda^n_n)^{\times},&&n=3\mod{4},
    \\
     {\check{\B}}^{\overline{01}}_{0,0,n},&&n=0,1,2\mod{4};
    \end{array}
    \right.
    \\
    \B^{\overline{03}}_{0,0,n}&=
    \left\lbrace
    \begin{array}{lll}
    {\check{\B}}^{\overline{01}}_{0,0,n}(\Lambda^{(0)}_n\oplus\Lambda^{n-2}_n\oplus\Lambda^n_n)^{\times},&&\mbox{$n$ is odd},
    \\
    {\check{\B}}^{\overline{01}}_{0,0,n}(\Lambda^{(0)}_n\oplus\Lambda^{n-1}_n)^{\times},&&\mbox{$n$ is even};
    \end{array}
    \right.
\\
    \check{\A}_{0,0,n}  &=\check{\A}^{\overline{03}}_{0,0,n} = \{T\in\Lambda^{\times}_n:\quad {\widetilde{T}}T\in\Lambda^{(0)\times}_n\},
    \\
    \check{\A}^{\overline{12}}_{0,0,n} &=
    \left\lbrace
    \begin{array}{lll}
    \check{\A}_{0,0,n}(\Lambda^{(0)}_n\oplus\Lambda^n_n)^{\times},&&\mbox{$n$ is odd},
    \\
    \check{\A}_{0,0,n}(\Lambda^{(0)}_n\oplus\Lambda^{n-1}_n)^{\times},&&\mbox{$n$ is even};
    \end{array}
    \right.
    \\
    \A^{\overline{01}}_{0,0,n} &= 
    \left\lbrace
    \begin{array}{lll}
    \check{\A}_{0,0,n}(\Lambda^{(0)}_n\oplus\Lambda^n_n)^{\times},&&n=1\mod{4},
    \\
    \check{\A}_{0,0,n},&&n=0,2,3\mod{4};
    \end{array}
    \right.
    \\
    \A^{\overline{23}}_{0,0,n} &=\left\lbrace
    \begin{array}{lll}
    \check{\A}_{0,0,n}(\Lambda^{(0)}_n\oplus\Lambda^{n-2}_n\oplus\Lambda^n_n)^{\times},&&\mbox{$n$ is odd},
    \\
    \check{\A}_{0,0,n}(\Lambda^{(0)}_n\oplus\Lambda^{n-1}_n)^{\times},&&\mbox{$n$ is even};
    \end{array}
    \right.
    \\
    \tilde{\Q}^{\overline{01}}_{0,0,n} &= \tilde{\Q}^{\overline{23}}_{0,0,n} =\tilde{\Q}^{\overline{12}}_{0,0,n}=\tilde{\Q}^{\overline{03}}_{0,0,n} = \Lambda^{\times}_n.
\end{align}
Let us consider the cases of the low-dimensional ($n\leq3$) Grassmann algebras $\C_{0,0,n}$. In the particular case $\C_{0,0,1}$ ($n=1$),
\begin{eqnarray}
    &&\check{\A}_{0,0,1}=\check{\A}^{\overline{03}}_{0,0,1} = \C^{0\times};
    \\
    &&\check{\B}^{\overline{01}}_{0,0,1}=\check{\B}^{\overline{23}}_{0,0,1} = \B^{\overline{12}}_{0,0,1} =  \B^{\overline{03}}_{0,0,1} =\check{\A}^{\overline{12}}_{0,0,1}=\A^{\overline{01}}_{0,0,1}=\A^{\overline{23}}_{0,0,1} 
    \\
    &&=\tilde{\Q}^{\overline{01}}_{0,0,1}= \tilde{\Q}^{\overline{23}}_{0,0,1} =\tilde{\Q}^{\overline{12}}_{0,0,1}=\tilde{\Q}^{\overline{03}}_{0,0,1} =\Lambda^{\times}_1.
\end{eqnarray}
In the case $\C_{0,0,2}$ ($n=2$),
\begin{eqnarray}
    &&\check{\A}_{0,0,2}=\check{\A}^{\overline{03}}_{0,0,2}=\A^{\overline{01}}_{0,0,2}=\Lambda^{(0)\times}_2;
    \\
    &&\check{\B}^{\overline{01}}_{0,0,2}=\check{\B}^{\overline{23}}_{0,0,2} = \B^{\overline{12}}_{0,0,2} =  \B^{\overline{03}}_{0,0,2} =\check{\A}^{\overline{12}}_{0,0,2}=\A^{\overline{23}}_{0,0,2}=\B^{\overline{03}}_{0,0,3}
    \\
    &&=\tilde{\Q}^{\overline{01}}_{0,0,2}= \tilde{\Q}^{\overline{23}}_{0,0,2} =\tilde{\Q}^{\overline{12}}_{0,0,2}=\tilde{\Q}^{\overline{03}}_{0,0,2} =\Lambda^{\times}_2.
\end{eqnarray}
In the case $\C_{0,0,3}$ ($n=3$), we have
\begin{eqnarray}
    &&\check{\A}_{0,0,3} =\check{\A}^{\overline{03}}_{0,0,3}=\check{\A}^{\overline{12}}_{0,0,3}=\A^{\overline{01}}_{0,0,3}= \Lambda^{023\times}_3;
    \\
    &&\A^{\overline{23}}_{0,0,3}=\tilde{\Q}^{\overline{01}}_{0,0,3}= \tilde{\Q}^{\overline{23}}_{0,0,3} =\tilde{\Q}^{\overline{12}}_{0,0,3}=\tilde{\Q}^{\overline{03}}_{0,0,3} =\Lambda^{\times}_3.
\end{eqnarray}
An arbitrary element $T\in\check{\B}^{\overline{01}}_{0,0,3}$ of the form
\begin{eqnarray}\label{t_form_2}
    T = ue+ u_1e_1 +u_2e_2+u_3e_3+u_{12}e_{12}+u_{13}e_{13}+u_{23}e_{23}+u_{123}e_{123}
\end{eqnarray}
has the following conditions on the coefficients $u,u_1,\ldots,u_{123}\in\F$:
\begin{eqnarray}
    u\neq 0,\qquad uu_{123}-u_1u_{23}+u_2u_{13}-u_3u_{12}=0.
\end{eqnarray}
\end{ex}

\section{Conclusions}

In this paper, we consider the groups $\Gamm^{\overline{kl}}_{p,q,r}$, $\check{\Gamm}^{\overline{kl}}_{p,q,r}$, and $\tilde{\Gamm}^{\overline{kl}}_{p,q,r}$ preserving the direct sums $\C^{\overline{kl}}_{p,q,r}$, $k,l=0,1,2,3$, of the subspaces determined by the grade involution and reversion under the adjoint representation $\ad$ and twisted adjoint representations $\check{\ad}$ and $\tilde{\ad}$. 
We prove that these groups are defined using the norm functions $\psi$ and $\chi$ and the centralizers $\Z^m_{p,q,r}$ and twisted centralizers $\check{\Z}^m_{p,q,r}$ of the subspaces of fixed grades.
In Table \ref{table_all_groups}, we present a comprehensive list of the groups under consideration in this work and their defining conditions. Namely, the first column indicates the group, while the second and third columns contain the set to which the norm functions $\psi$ and $\chi$ respectively of the group's elements belong  according to the group's definition. 
\begin{table}
\caption{Generalized Clifford and Lipschitz groups and auxiliary groups $\check{\A}_{p,q,r}$ and $\check{\B}_{p,q,r}$}\label{table_all_groups} 
\begin{tabular}{| c | c  | c |} \hline
Lie group\rule{0pt}{3.5ex}  & $\psi(T)=\widetilde{T}T$ & $\chi(T)=\widehat{\widetilde{T}}T$ \\ \hline
$\A^{\overline{01}}_{p,q,r}=\Gamm^{\overline{01}}_{p,q,r}$\rule{0pt}{3ex}    & $\Z^{1\times}_{p,q,r}=\ker(\ad)$ & \\ 
$\A^{\overline{23}}_{p,q,r}=\Gamm^{\overline{23}}_{p,q,r}$\rule{0pt}{3ex}  & $(\Z^2_{p,q,r}\cap\Z^3_{p,q,r})^\times$ & \\ 
$\B^{\overline{12}}_{p,q,r}=\Gamm^{\overline{12}}_{p,q,r}$\rule{0pt}{3ex}  & & $\Z^{1\times}_{p,q,r}=\ker(\ad)$ \\ 
$\B^{\overline{03}}_{p,q,r}=\Gamm^{\overline{03}}_{p,q,r}$\rule{0pt}{3ex}  & & $\Z^{3\times}_{p,q,r}$ \\ \hline
$\check{\A}^{\overline{12}}_{p,q,r}=\check{\Gamm}^{\overline{12}}_{p,q,r}$\rule{0pt}{3ex}  & $(\check{\Z}^1_{p,q,r}\cap\check{\Z}^2_{p,q,r})^{\times}$ & \\ 
$\check{\A}^{\overline{03}}_{p,q,r}=\check{\Gamm}^{\overline{03}}_{p,q,r}$\rule{0pt}{3ex}  & $({\Z}^3_{p,q,r}\cap\C^{(0)}_{p,q,r})^{\times}$ & \\
$\check{\B}^{\overline{01}}_{p,q,r}=\check{\Gamm}^{\overline{01}}_{p,q,r}$\rule{0pt}{3ex}  & & $({\Z}^1_{p,q,r}\cap\C^{(0)}_{p,q,r})^{\times}=\ker(\check{\ad})$ \\
$\check{\B}^{\overline{23}}_{p,q,r}=\check{\Gamm}^{\overline{23}}_{p,q,r}$\rule{0pt}{3ex}  & & $(\check{\Z}^2_{p,q,r}\cap\check{\Z}^3_{p,q,r})^{\times}$ \\ \hline 
$\tilde{\Q}^{\overline{01}}_{p,q,r}=\tilde{\Gamm}^{\overline{01}}_{p,q,r}$\rule{0pt}{3ex}  & $\Z^{4\times}_{p,q,r}$ & $\check{\Z}^{1\times}_{p,q,r}=\ker(\tilde{\ad})$ \\ 
$\tilde{\Q}^{\overline{23}}_{p,q,r}=\tilde{\Gamm}^{\overline{23}}_{p,q,r}$\rule{0pt}{3ex}  & $\Z^{2\times}_{p,q,r}$ & $\check{\Z}^{3\times}_{p,q,r}$ \\ 
$\tilde{\Q}^{\overline{12}}_{p,q,r}=\tilde{\Gamm}^{\overline{12}}_{p,q,r}$\rule{0pt}{3ex}  & $\check{\Z}^{1\times}_{p,q,r}=\ker(\tilde{\ad})$ & $\Z^{2\times}_{p,q,r}$ \\ 
$\tilde{\Q}^{\overline{03}}_{p,q,r}=\tilde{\Gamm}^{\overline{03}}_{p,q,r}$\rule{0pt}{3ex}  & $\check{\Z}^{3\times}_{p,q,r}$ & $\Z^{4\times}_{p,q,r}$ \\  \hline 
$\check{\A}_{p,q,r}$\rule{0pt}{3ex} & $\ker(\check{\ad})$ & \\
$\check{\B}_{p,q,r}$\rule{0pt}{3ex} &  & $\ker(\check{\ad})$\\ \hline
\end{tabular}
\end{table}

We prove that the groups of types $\A$ and $\B$ are related to each other through multiplication, using the auxiliary simple groups $\check{\A}_{p,q,r}$ and $\check{\B}_{p,q,r}$:
\begin{eqnarray*}
\!\!\!\!\!\!\!\!\!\!&\check{\A}^{\overline{03}}_{p,q,r}=\check{\A}_{p,q,r}({\Z}^3_{p,q,r}\cap\C^{(0)}_{p,q,r})^{\times},\quad \check{\A}^{\overline{12}}_{p,q,r}=\check{\A}_{p,q,r}(\check{\Z}^1_{p,q,r}\cap\check{\Z}^2_{p,q,r})^{\times},
\\
\!\!\!\!\!\!\!\!\!\!&\A^{\overline{23}}_{p,q,r}= \A^{\overline{01}}_{p,q,r}(\Z^2_{p,q,r}\cap\Z^3_{p,q,r})^{\times},
\quad \A^{\overline{01}}_{p,q,r} = \check{\A}_{p,q,r}\Z^{\times}_{p,q,r}.
\end{eqnarray*}
and
\begin{eqnarray*}
        \B^{\overline{12}}_{p,q,r} = \check{\B}_{p,q,r}\Z^{\times}_{p,q,r},
        \;\check{\B}^{\overline{23}}_{p,q,r}=\check{\B}_{p,q,r}(\check{\Z}^2_{p,q,r}\cap\check{\Z}^3_{p,q,r})^{\times},\;\B^{\overline{03}}_{p,q,r}=\check{\B}_{p,q,r}\Z^{3\times}_{p,q,r}.
    \end{eqnarray*}
In the particular cases of $\C_{p,q,r}$, where the centralizers in the group definitions have a similar form, many groups coincide. Specifically, for a fixed dimension $n$, the less degenerate the algebra, the more the groups of types $\Q$, $\A$, and $\B$ coincide (see Remark \ref{remark_tildeQ}  and the notes before Theorems \ref{theorem_AB} and \ref{theorem_chAB} respectively). Conversely, in the Grassmann algebras $\C_{0,0,n}$ of sufficiently large dimension, almost all the groups are distinct (see the example in Section \ref{section_examples}).

The groups presented in Table~\ref{table_all_groups} can be considered as generalized degenerate Clifford and Lipschitz groups, since they  preserve fixed subspaces under the adjoint and twisted adjoint representations. The considered groups contain standard Clifford and Lipschitz as subgroups and are useful for the study of the generalized degenerate spin groups. 
These groups can be useful for applications of geometric algebras in computer science \cite{ce,h1,cNN0}, physics \cite{lg1,phys,hestenes}, and engineering \cite{cv2,ce}. The generalized degenerate Clifford and Lipschitz groups can be applied in deep learning to construct neural networks that are equivariant with respect to the action of pseudo-orthogonal groups \cite{cNN}. 
Namely, one can construct neural network layers that are equivariant with respect to a certain generalized Lipschitz group by parameterizing mappings that are equivariant with respect to this group \cite{FS2025}. For example, these mappings include projections onto the direct sums of the subspaces determined by the grade involution and reversion. These layers will automatically be equivariant with respect to the corresponding pseudo-orthogonal groups.
Moreover, the generalized Lipschitz groups can be interesting for considering the Galilei group related to the spin groups in the Galilei--Clifford algebra $\C_{3,0,1}$~\cite{Abl,br1,br2}.
Also they might be useful for working with orthogonal transformations in PGA $\C_{p,0,1}$, which are applied in computer graphics and vision, robotics, motion capture, dynamics simulation \cite{pga}.

\section*{Acknowledgements}

The results of this paper were reported at the ENGAGE Workshop (Geneva, Switzerland, July 2024) within the International Conference Computer Graphics International 2024 (CGI 2024). The authors are grateful to the organizers and participants of this conference for fruitful discussions.

The authors are grateful to the anonymous reviewers for their careful reading of the paper and helpful comments on how to improve the presentation.

The publication was prepared within the framework of the Academic Fund Program at HSE University (grant 24-00-001 Clifford algebras and applications).

\medskip

\noindent{\bf Data availability} Data sharing is not applicable to this article as no datasets were generated or analyzed during the current study.

\medskip

\noindent{\bf Declarations}\\
\noindent{\bf Conflict of interest} The authors declare that they have no conflict of interest.

\appendix

\section{Summary of notation}\label{section_notation}

Following the reviewer’s recommendation, we include in Tables \ref{table_notation1}--\ref{table_notation2} a summary of the notation used throughout the paper. The table lists each notation, its meaning, and the place where it first appears.

\begin{table}[h]
\caption{Summary of notation (part 1)}\label{table_notation1} 
\begin{tabular}{| c | c | c |}  \hline
Notation  & Meaning & First mention \\ \hline
$\C_{p,q,r}$ & \parbox{5.2cm}{\begin{center}(Clifford) geometric algebra over the real $\BR^{p,q,r}$ or complex $\BC^{p+q,0,r}$ vector space\end{center}} & page \pageref{section_def}\\ \cline{2-2}
$\F$ & \parbox{5.2cm}{\begin{center}field of real or complex numbers in the cases $\BR^{p,q,r}$ and $\BC^{p+q,0,r}$  respectively\end{center}} & page \pageref{section_def}  \\ \cline{2-2}
$\Lambda_r$ & Grassmann subalgebra $\C_{0,0,r}$ & page \pageref{gen} \\ \cline{2-2}
$e_1,\ldots,e_n$ & generators of $\C_{p,q,r}$ & page \pageref{gen} \\ \cline{2-2}
$\rad\C_{p,q,r}$ & Jacobson radical of $\C_{p,q,r}$ & page \pageref{rad} \\ \hline
$\C^{0}$ & subspace of grade $0$ & page \pageref{gen}\\ \cline{2-2}
$\C^{k}_{p,q,r}$ & subspace of fixed grade $k=1,\ldots,n$ & page \pageref{gen} \\ \cline{2-2}
$\C^{\geq k}_{p,q,r}$, $\C^{\leq k}_{p,q,r}$ & \parbox{5.2cm}{\begin{center}direct sums of subspaces of grades larger/smaller or equal to $k$\end{center}} & \parbox{2.2cm}{\begin{center}formulas \eqref{def_geq_k}--\eqref{def_leq_k}\end{center}} \\ \cline{2-2}
$\{\C^k_{p,q,0}\Lambd^l_r\}$ & \parbox{5.2cm}{\begin{center}subspace of $\C_{p,q,r}$ spanned by the elements of the form $ab$, where $a\in \C^k_{p,q,0}$ and $b\in\Lambd^l_r$\end{center}} & page \pageref{Zpqr}\\ \hline
$\stackrel{}{\widehat{U}}$ & grade involute of $U\in\C_{p,q,r}$ & page \pageref{evenodd} \\ \cline{2-2}
$\widetilde{U}$ & reverse of $U\in\C_{p,q,r}$ & page \pageref{evenodd} \\ \cline{2-2}
$\widehat{\widetilde{U}}$ & Clifford conjugate of $U\in\C_{p,q,r}$ & page \pageref{evenodd} \\ \hline
$\C^{(0)}_{p,q,r}$, $\C^{(1)}_{p,q,r}$ & even and odd subspaces & formula \eqref{evenodd} \\ \cline{2-2}
$\langle\rangle_{(0)}$, $\langle\rangle_{(1)}$ & \parbox{5.2cm}{\begin{center}projections onto $\C^{(0)}_{p,q,r}$, $\C^{(1)}_{p,q,r}$\end{center}} & page \pageref{evenodd} \\ \cline{2-2}
$\langle\rangle_{\Lambda^{(0)}_r}$ & \parbox{5.2cm}{\begin{center}projection onto even Grassmann subalgebra $\Lambda^{(0)}_r$\end{center}} & page \pageref{def_chB} \\ \hline
$\C^{\overline{k}}_{p,q,r}$ & \parbox{5.2cm}{\begin{center}subspaces determined by grade involution and reversion\end{center}} & formula \eqref{qtdef}\\ \cline{2-2}
$\C^{\overline{kl}}_{p,q,r}$ & \parbox{5.2cm}{\begin{center}direct sums of $\C^{\overline{k}}_{p,q,r}$, $k=0,1,2,3$\end{center}} & page \pageref{qtdef} \\ \hline
$\Z_{p,q,r}$ & center of $\C_{p,q,r}$ & formula \eqref{Zpqr} \\ \cline{2-2}
$\Z^m_{p,q,r}$, $\check{\Z}^m_{p,q,r}$ & \parbox{5.2cm}{\begin{center}centralizers and twisted centralizers respectively of $\C^{m}_{p,q,r}$, $m=0,\ldots,n$\end{center}} & \parbox{2.2cm}{\begin{center}formulas \eqref{def_Zm}--\eqref{def_chZm}\end{center}} \\ \cline{2-2}
$\Z^{\overline{k}}_{p,q,r}$, $\check{\Z}^{\overline{k}}_{p,q,r} $& \parbox{5.2cm}{\begin{center}centralizers and twisted centralizers respectively of $\C^{\overline{k}}_{p,q,r}$, $k=0,1,2,3$\end{center}} & \parbox{2.2cm}{\begin{center}formulas \eqref{def_CC_ov}--\eqref{def_chCC_ov}\end{center}}\\ \cline{2-2}
${\Z}^{\overline{kl}}_{p,q,r} $, $\check{\Z}^{\overline{kl}}_{p,q,r} $& \parbox{5.2cm}{\begin{center}centralizers and twisted centralizers respectively of $\C^{\overline{kl}}_{p,q,r}$, $k,l=0,1,2,3$\end{center}} & formula \eqref{def_sumZ}\\ 
\hline
\end{tabular}
\end{table}

\begin{table}[h]
\caption{Summary of notation (part 2)}\label{table_notation2} 
\begin{tabular}{| c | c | c |}  \hline
Notation  & Meaning & First mention \\ \hline
$\H^\times$ & \parbox{5.2cm}{\begin{center}subset of all invertible elements of a set $\H$\end{center}}  & page \pageref{def_cg} \\ \hline
$\Gamma_{p,q,r}$ & Clifford group & formula \eqref{def_cg}\\ \cline{2-2}
$\Gamma^{\pm}_{p,q,r}$ & Lipschitz group & formula \eqref{def_lg}\\ \hline
$\psi$, $\chi$ & norm functions of $\C_{p,q,r}$ elements & formula \eqref{norm_functions} \\ \hline
$\ad$ & \parbox{5.2cm}{\begin{center}adjoint representation ${\ad}_{T}(U)=TU T^{-1}$\end{center}} &  formula (\ref{ar})\\ \cline{2-2}
$\check{\ad}$ & \parbox{5.2cm}{\begin{center}twisted adjoint representation $\check{\ad}_{T}(U)=\widehat{T}U T^{-1}$\end{center}} & \parbox{2.2cm}{\begin{center}formula (\ref{twa1})  \end{center}} \\ \cline{2-2}
$\tilde{\ad}$ & \parbox{5.2cm}{\begin{center}twisted adjoint representation $\tilde{\ad}_{T}(U)=TU_0 T^{-1}+\widehat{T} U_1 T^{-1}$\end{center}} & \parbox{2.2cm}{\begin{center}formula (\ref{twa22})\end{center}} \\ \cline{2-2}
\parbox{3.2cm}{\begin{center}$\ker(\ad)$, $\ker{(\check{\ad})}$, $\ker{(\tilde{\ad})}$\end{center}} & kernels of $\ad$, $\check{\ad}$, and $\tilde{\ad}$ & page \pageref{ker_ad}\\ \hline
\parbox{3.2cm}{\begin{center}$\Gamm^{\overline{kl}}_{p,q,r}$,
$\check{\Gamm}^{\overline{kl}}_{p,q,r}$,
$\tilde{\Gamm}^{\overline{kl}}_{p,q,r}$\end{center}} & \parbox{5.2cm}{\begin{center}groups preserving $\C^{\overline{kl}}_{p,q,r}$ under $\ad$, $\check{\ad}$, and $\tilde{\ad}$\end{center}} & \parbox{2.2cm}{\begin{center}formulas \eqref{def_gammakl}--\eqref{ti_Gamma_kl}\end{center}}\\ \cline{2-2}
\parbox{2.2cm}{\begin{center}$\A^{\overline{01}}_{p,q,r}$, $\B^{\overline{12}}_{p,q,r}$, $\A^{\overline{23}}_{p,q,r}$, $\B^{\overline{03}}_{p,q,r}$\end{center}}  & \parbox{4.2cm}{\begin{center}groups preserving $\C^{\overline{kl}}_{p,q,r}$ under $\ad$\end{center}} & \parbox{2.2cm}{\begin{center}formulas \eqref{def_A01}--\eqref{def_B03}\end{center}}\\ \cline{2-2}
\parbox{2.2cm}{\begin{center}$\check{\A}^{\overline{12}}_{p,q,r}$, $\check{\B}^{\overline{01}}_{p,q,r}$, $\check{\A}^{\overline{03}}_{p,q,r}$, $\check{\B}^{\overline{23}}_{p,q,r}$ \end{center}} & \parbox{4.2cm}{\begin{center}groups preserving $\C^{\overline{kl}}_{p,q,r}$ under $\check{\ad}$\end{center}} & \parbox{2.2cm}{\begin{center}formulas \eqref{def_chA12}--\eqref{def_chB23_}\end{center}}\\ \cline{2-2}
\parbox{2.2cm}{\begin{center}$\tilde{\Q}^{\overline{01}}_{p,q,r}$, $\tilde{\Q}^{\overline{23}}_{p,q,r}$, $\tilde{\Q}^{\overline{12}}_{p,q,r}$, $\tilde{\Q}^{\overline{03}}_{p,q,r}$\end{center}} & \parbox{4.2cm}{\begin{center}groups preserving $\C^{\overline{kl}}_{p,q,r}$ under $\tilde{\ad}$\end{center}} & \parbox{2.2cm}{\begin{center}formulas \eqref{def_tQ01}--\eqref{def_tQ03_} \end{center}}\\ \cline{2-2}
$\check{\A}_{p,q,r}$, $\check{\B}_{p,q,r}$ & auxiliary groups & \parbox{2.2cm}{\begin{center}formulas \eqref{def_chA}, \eqref{def_chB}\end{center}} \\ \hline 
$\D^{\perp}_{\H}$ & \parbox{5.2cm}{\begin{center}orthogonal complement of a subset $\D\subseteq \H$\end{center}} & formula \eqref{def_orthcomp} \\ \hline
\end{tabular}
\end{table}

\section{The corresponding Lie algebras}\label{section_algebras}

\begin{thm}\label{thm_alg}
    The Lie algebras of the Lie groups of type $\A$ ($\A^{\overline{01}}_{p,q,r}$, $\A^{\overline{23}}_{p,q,r}$, $\check{\A}^{\overline{12}}_{p,q,r}$, and $\check{\A}^{\overline{03}}_{p,q,r}$),  type B ($\B^{\overline{12}}_{p,q,r}$, $\B^{\overline{03}}_{p,q,r}$, $\check{\B}^{\overline{01}}_{p,q,r}$, and $\check{\B}^{\overline{23}}_{p,q,r}$), and type $\Q$ ($\tilde{\Q}^{\overline{01}}_{p,q,r}$, $\tilde{\Q}^{\overline{23}}_{p,q,r}$, $\tilde{\Q}^{\overline{12}}_{p,q,r}$,  and $\tilde{\Q}^{\overline{03}}_{p,q,r}$) and their dimensions are presented in Tables~\ref{table_lie_algebras_A}--\ref{table_lie_algebras_Q2}.
\end{thm}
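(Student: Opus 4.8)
The plan is to compute each Lie algebra as the tangent space at the identity $e$, exploiting the fact that every group in the three families is realized (via Theorems \ref{theorem_AB}, \ref{theorem_chAB}, and \ref{theorem_tildeQ}) as a preimage of a linear subspace under the quadratic norm maps $\psi(T)=\widetilde{T}T$ and $\chi(T)=\widehat{\widetilde{T}}T$. Each such group is a closed subgroup of $\C^{\times}_{p,q,r}$, being the setwise stabilizer of a subspace $\C^{\overline{kl}}_{p,q,r}$ under $\ad$, $\check{\ad}$, or $\tilde{\ad}$, and hence a Lie group whose Lie algebra is $\mathfrak{g}=T_e G$. Since each defining set $S^{\times}$ (a centralizer, a twisted centralizer, or an intersection thereof) is an open subset of a linear subspace $S$ containing $e$, the tangent space of $S^{\times}$ at $e$ is simply $S$, so the whole computation reduces to linearizing $\psi$ and $\chi$.

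First I would compute the differentials at $e$. Writing $T=e+tU$ gives $D\psi_e(U)=\widetilde{U}+U$ and $D\chi_e(U)=\widehat{\widetilde{U}}+U$. Decomposing $U$ by quaternion type (\ref{qtdef}) and using that reversion acts as $+1$ on $\C^{\overline{0}}_{p,q,r},\C^{\overline{1}}_{p,q,r}$ and as $-1$ on $\C^{\overline{2}}_{p,q,r},\C^{\overline{3}}_{p,q,r}$, while Clifford conjugation acts as $+1$ on $\C^{\overline{0}}_{p,q,r},\C^{\overline{3}}_{p,q,r}$ and as $-1$ on $\C^{\overline{1}}_{p,q,r},\C^{\overline{2}}_{p,q,r}$, I obtain $\widetilde{U}+U=2\langle U\rangle_{\overline{01}}$ and $\widehat{\widetilde{U}}+U=2\langle U\rangle_{\overline{03}}$, where $\langle\,\cdot\,\rangle_{\overline{kl}}$ denotes the projection onto $\C^{\overline{kl}}_{p,q,r}$. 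This is the infinitesimal form of (\ref{norms_01_03}). In particular $\psi$, viewed as a map into $\C^{\overline{01}}_{p,q,r}$, and $\chi$, viewed as a map into $\C^{\overline{03}}_{p,q,r}$, are submersions at $e$. By transversality of the linearized map to the target subspace, the Lie algebra of a type-$\A$ group $\psi^{-1}(S^{\times})$ is $\{U:\langle U\rangle_{\overline{01}}\in S\}=(S\cap\C^{\overline{01}}_{p,q,r})\oplus\C^{\overline{23}}_{p,q,r}$, and that of a type-$\B$ group $\chi^{-1}(S^{\times})$ is $(S\cap\C^{\overline{03}}_{p,q,r})\oplus\C^{\overline{12}}_{p,q,r}$.

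For the type-$\Q$ groups, which are cut out by two conditions $\psi(T)\in S_1^{\times}$ and $\chi(T)\in S_2^{\times}$, I would linearize the joint map $(\psi,\chi)$. Here care is needed: since $\langle U\rangle_{\overline{01}}$ and $\langle U\rangle_{\overline{03}}$ share their common type-$\overline{0}$ component, $D(\psi,\chi)_e$ is a submersion only onto the subspace $\{(a,b)\in\C^{\overline{01}}_{p,q,r}\times\C^{\overline{03}}_{p,q,r}:\ \langle a\rangle_{\overline{0}}=\langle b\rangle_{\overline{0}}\}$ rather than onto the full product. Verifying transversality inside this subspace nevertheless yields $\mathfrak{g}=\{U:\langle U\rangle_{\overline{01}}\in S_1,\ \langle U\rangle_{\overline{03}}\in S_2\}$, which, after decomposing $U$ into its four quaternion-type parts, constrains the $\C^{\overline{01}}_{p,q,r}$-component by $S_1$, the $\C^{\overline{03}}_{p,q,r}$-component by $S_2$ (with matched type-$\overline{0}$ part), and leaves the type-$\overline{2}$ part free.

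It then remains to substitute the explicit centralizers and twisted centralizers from Lemma \ref{cases_we_use}, together with the intersection formulas (\ref{cc2_cap_cc3}) and (\ref{chCC1_chCC2})--(\ref{ch_cc2_ch_cc3}), for the relevant $S$, intersect with the appropriate quaternion-type subspaces, read off a basis, and count dimensions using $\dim\C^{k}_{p,q,r}=\binom{n}{k}$, $\dim\Lambd^{l}_r=\binom{r}{l}$, and $\dim\{\C^{k}_{p,q,0}\Lambd^{l}_r\}=\binom{p+q}{k}\binom{r}{l}$. I expect the main obstacle to be organizational rather than conceptual: the centralizer formulas split according to the parity of $n$ and the cases $r=n$ versus $r\neq n$, so completing Tables \ref{table_lie_algebras_A}--\ref{table_lie_algebras_Q2} requires a systematic case analysis for each of the twelve groups, with the type-$\Q$ entries demanding the extra bookkeeping of the shared type-$\overline{0}$ component noted above.
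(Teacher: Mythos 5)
Your treatment of the type-$\A$ and type-$\B$ families is sound, and it supplies exactly the details that the paper's one-line proof (an appeal to ``well-known facts'' about Lie groups and Lie algebras plus the dimension formulas) leaves implicit: the linearizations $D\psi_e(U)=\widetilde{U}+U=2\langle U\rangle_{\overline{01}}$ and $D\chi_e(U)=\widehat{\widetilde{U}}+U=2\langle U\rangle_{\overline{03}}$ are correct, $\psi$ and $\chi$ are submersions onto $\C^{\overline{01}}_{p,q,r}$ and $\C^{\overline{03}}_{p,q,r}$ near $e$, and the preimage theorem gives the tangent spaces $(S\cap\C^{\overline{01}}_{p,q,r})\oplus\C^{\overline{23}}_{p,q,r}$ and $(S\cap\C^{\overline{03}}_{p,q,r})\oplus\C^{\overline{12}}_{p,q,r}$, which reproduce Tables \ref{table_lie_algebras_A}--\ref{table_lie_algebras_B} after substituting Lemma \ref{cases_we_use}.

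The type-$\Q$ step, however, contains a genuine gap. You propose to regard $(\psi,\chi)$ as a map into $W=\{(a,b):\langle a\rangle_{\overline{0}}=\langle b\rangle_{\overline{0}}\}$ and to ``verify transversality inside this subspace.'' But $(\psi,\chi)$ does not take values in $W$: writing $T=\langle T\rangle_{(0)}+\langle T\rangle_{(1)}$ one gets $\langle\psi(T)\rangle_{\overline{0}}-\langle\chi(T)\rangle_{\overline{0}}=2\,\widetilde{\langle T\rangle_{(1)}}\langle T\rangle_{(1)}$, which is nonzero in general; already $T=e_1$ (with $e_1^2=e$) lies in $\tilde{\Q}^{\overline{01}}_{p,q,r}$ and has $(\psi(T),\chi(T))=(e,-e)\notin W$, so corestricting the map to $W$ would even change the set being studied. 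Only the differential at $e$ has image in $W$. If one instead works honestly in the product $\C^{\overline{01}}_{p,q,r}\times\C^{\overline{03}}_{p,q,r}$, transversality of $(\psi,\chi)$ to $S_1\times S_2$ at $e$ amounts (comparing type-$\overline{0}$ components) to $\langle S_1\rangle_{\overline{0}}+\langle S_2\rangle_{\overline{0}}=\C^{\overline{0}}_{p,q,r}$, where $\langle\,\cdot\,\rangle_{\overline{0}}$ is the projection onto $\C^{\overline{0}}_{p,q,r}$; this fails in general. For instance, for $\tilde{\Q}^{\overline{01}}_{4,0,4}$ one has $S_1=\Z^4_{4,0,4}=\Lambd_4\oplus\{\C^2_{4,0,0}\Lambd^4_4\}\oplus\C^8_{4,0,4}$ and $S_2=\check{\Z}^1_{4,0,4}=\Lambd_4$, and the element $e_{1234}\in\C^{\overline{0}}_{4,0,4}$ lies in neither projection. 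So the hypothesis of the preimage/transversality theorem is violated, and this route cannot justify the $\Q$-entries of Tables \ref{table_lie_algebras_Q1}--\ref{table_lie_algebras_Q2}, even though the formula you assert for the Lie algebra is in fact the correct one.

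What repairs the argument is the group structure, which your proposal never invokes at this point. By Theorem \ref{theorem_tildeQ}, $\tilde{\Q}^{\overline{kl}}_{p,q,r}=\tilde{\Gamm}^{\overline{kl}}_{p,q,r}$ is the stabilizer of the subspace $\C^{\overline{kl}}_{p,q,r}$ under the smooth representation $\tilde{\ad}$ of the Lie group $\C^{\times}_{p,q,r}$, and for the stabilizer of a subspace $V$ under a representation $\rho$ the Lie algebra is exactly the infinitesimal stabilizer $\{X:\ d\rho_X(V)\subseteq V\}$: both inclusions follow from $\rho_{\exp(tX)}=\exp(t\,d\rho_X)$, with no transversality needed. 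Here $d\tilde{\ad}_X(U)=X\langle U\rangle_{(0)}-\langle U\rangle_{(0)}X+\widehat{X}\langle U\rangle_{(1)}-\langle U\rangle_{(1)}X$, and one then proves an infinitesimal analogue of Lemma \ref{lemma_for_AB} (a computation parallel to, but separate from, the group-level one) to convert the invariance condition into $\langle X\rangle_{\overline{01}}\in S_1$ and $\langle X\rangle_{\overline{03}}\in S_2$, recovering your formula and the tables. The same stabilizer argument also covers types $\A$ and $\B$ uniformly, and is presumably what the paper's appeal to ``well-known facts'' is meant to encode; your submersion argument is a valid substitute for those two families, but not for type $\Q$.
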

\begin{proof}
We use the well-known facts about the relation between an arbitrary Lie group and the corresponding Lie
algebra to prove the statements. The dimensions of these Lie algebras are calculated using (see, for example, \cite{lie_alg})
\begin{eqnarray}
\!\!\!\!\!\!\!\!\!\!\!\!\!&\dim \C^{k}_{p,q,r} = \left\lbrace
            \begin{array}{lll}
        \binom{n}{k},&& \!\!\!\!\!n\geq k,
        \\
        0,&& \!\!\!\!\!n< k;
        \end{array}
            \right.
\\
\!\!\!\!\!\!\!\!\!\!\!\!\!&\dim\C^{\overline{0}}_{p,q,r} = 2^{n-2}+2^{\frac{n}{2}-1}\cos(\frac{\pi n}{4}),\; \dim\C^{\overline{1}}_{p,q,r} = 2^{n-2}+2^{\frac{n}{2}-1}\sin(\frac{\pi n}{4}),
\\
\!\!\!\!\!\!\!\!\!\!\!\!\!&\dim\C^{\overline{2}}_{p,q,r} = 2^{n-2}-2^{\frac{n}{2}-1}\cos(\frac{\pi n}{4}),\;\dim\C^{\overline{3}}_{p,q,r} = 2^{n-2}-2^{\frac{n}{2}-1}\sin(\frac{\pi n}{4}),
\end{eqnarray}
where $\binom{n}{k}=\frac{n!}{k!(n-k)!}$ is the binomial coefficient.
\end{proof}

In Tables \ref{table_lie_algebras_A}--\ref{table_lie_algebras_Q2}, we write out the Lie algebras of the Lie groups considered in Section \ref{section_ds_qt}. The first column of each table contains the Lie groups, while the fourth and fifth columns provide the corresponding Lie algebras and dimensions respectively. 
The sets in the fourth column are considered with respect to the commutator $[U,V]=UV-VU$.
The second and third 
columns of the tables contain conditions on $n$ and $r$ respectively. They are empty for the Lie groups that have the same Lie algebra for any natural $n\geq1$ and $r\geq0$ respectively. 

Denote by $\dim_{\A}$, $\dim_{\B}$, and $\dim_{\Q}$ the following values:
\begin{eqnarray*}
\dim_{\A}\!\!\!\!\!\! &:=& \!\!\!\!\!\!2^{n-1}-2^{\frac{n}{2}-1}(\cos(\frac{\pi n}{4})+\sin(\frac{\pi n}{4}))\!+\!
\left\lbrace
            \begin{array}{lll}
       \!\!\! 2^{r-2}+2^{\frac{r}{2}-1}\cos(\frac{\pi r}{4}),&& \!\!\!\!\!\!\!\! r\geq 1,
        \\
       \!\!\! 1,&&\!\!\! \!\!\!\!\! r=0;
        \end{array}
            \right.
    \\
\dim_{\B}\!\!\!\!\!\! &:=&\!\!\!\!\!\!  2^{n-1}+2^{\frac{n}{2}-1}(\sin(\frac{\pi n}{4})-\cos(\frac{\pi n}{4}))\!+\!
\left\lbrace
            \begin{array}{lll}
            \!\!\!2^{r-2}+2^{\frac{r}{2}-1}\cos(\frac{\pi r}{4}),&& \!\!\!\!\!\!\!\! r\geq 1,
            \\
            \!\!\!1,&&\!\!\! \!\!\!\!\! r=0;
            \end{array}
            \right.
    \\
  \dim_{\Q} \!\!\!\!\!\!&:=&  \!\!\!\!\!\!2^{n-2}- 2^{\frac{n}{2}-1}\cos(\frac{\pi n}{4})\! +\!
\left\lbrace
            \begin{array}{lll}
\!\!\!3\cdot 2^{r-2} + 2^{\frac{r}{2}-1}(\sin(\frac{\pi r}{4})+ \cos(\frac{\pi r}{4})),&& \!\!\!\!\!\!\!\! r\geq 1,
\\
\!\!\!1,&&\!\!\! \!\!\!\!\! r=0.
            \end{array}
            \right.
\end{eqnarray*}

\begin{landscape}
\begin{table}
\caption{Lie groups of $\A$-type and the corresponding Lie algebras}\label{table_lie_algebras_A} 
\begin{tabular}{| c | c | c | c | c |}  \hline
Lie group  & $n\mod{4}$ & $r$ & Lie algebra & Dimension\\ \hline
$\A^{\overline{01}}_{p,q,r}$\rule{0pt}{3ex} & $0,2,3$ & & $\Lambda^{\overline{0}}_r\oplus\C^{\overline{23}}_{p,q,r}$ & $\dim_{\A}$\\
$\A^{\overline{23}}_{p,q,r}$\rule{0pt}{3ex} & $0$ & & & \\ 
& $2$ & $r\leq n-2$ &  & \\
& $3$ & $r\leq n-3$ &  & \\
$\check{\A}^{\overline{12}}_{p,q,r}$\rule{0pt}{3ex} & $0,3$ & & &  \\
& $1$ & $r\neq n$&  & \\
 & $2$ & $r\leq n-2$ &  & \\
$\check{\A}^{\overline{03}}_{p,q,r}$\rule{0pt}{3ex} & $2,3$ & &  &  \\
 & $0$ & $r\leq n-3,r=n$ &  & \\
  & $1$ & $r\leq n-4,r=n$ &  & \\ \hline
$\A^{\overline{01}}_{p,q,r}$, $\A^{\overline{23}}_{p,q,r}$\rule{0pt}{3ex} & $1$ & & $\Lambda^{\overline{0}}_r\oplus\C^{\overline{23}}_{p,q,r}\oplus\C^n_{p,q,r}$  & $\dim_{\A}+1$ \\
 $\check{\A}^{\overline{12}}_{p,q,r}$\rule{0pt}{3ex} & $1$ & $r=n$ &  & \\ 
 $\check{\A}^{\overline{03}}_{p,q,r}$\rule{0pt}{3ex} & $0$ & $r=n-2,n-1$ &  & \\  \hline 
$\A^{\overline{23}}_{p,q,r}$, $\check{\A}^{\overline{12}}_{p,q,r}$\rule{0pt}{3ex} & $2$ & $r=n-1,n$ & $\Lambda^{\overline{0}}_r\oplus\C^{\overline{23}}_{p,q,r}\oplus\Lambda^{n-1}_r$ &  $\dim_{\A} + \frac{r!}{(n-1)!(r-n+1)!}$\\  \hline
$\A^{\overline{23}}_{p,q,r}$\rule{0pt}{3ex} & $3$ & $r\geq n-2$ & $\Lambda^{\overline{0}}_r\oplus\C^{\overline{23}}_{p,q,r}\oplus\Lambda^{n-2}_r$ & $\dim_{\A}+\frac{r!}{(n-2)!(r-n+2)!}$ \\ \hline
$\check{\A}^{\overline{03}}_{p,q,r}$\rule{0pt}{3ex} & $1$ & $r=n-1$ & $\Lambda^{\overline{0}}_r\oplus\C^{\overline{23}}_{p,q,r}\oplus\{\C^1_{p,q,0}\Lambda^{n-2}_r\}$  & $\dim_{\A}+n-1$ \\ \hline
$\check{\A}^{\overline{03}}_{p,q,r}$\rule{0pt}{3ex} & $1$ & $r=n-3$ & $\Lambda^{\overline{0}}_r\oplus\C^{\overline{23}}_{p,q,r}\oplus\{\C^2_{p,q,0}\Lambda^{n-3}_r\}$  & $\dim_{\A}+3$\\ \hline
$\check{\A}^{\overline{03}}_{p,q,r}$\rule{0pt}{3ex} & $1$ & $r=n-2$ & $\Lambda^{\overline{0}}_r\oplus\C^{\overline{23}}_{p,q,r}\oplus\{\C^1_{p,q,0}\Lambda^{n-2}_r\}\oplus\{\C^2_{p,q,0}\Lambda^{n-3}_r\}$  & $\dim_{\A}+n$\\ \hline
\end{tabular}
\end{table}
\end{landscape}

\begin{landscape}
\begin{table}
\caption{Lie groups of $\B$-type and the corresponding Lie algebras}\label{table_lie_algebras_B} 
\begin{tabular}{| c | c | c | c | c |}  \hline
Lie group  & $n\mod{4}$ & $r$ & Lie algebra & Dimension\\ \hline
$\check{\B}^{\overline{01}}_{p,q,r}$\rule{0pt}{3ex} & & & $\Lambda^{\overline{0}}_r\oplus\C^{\overline{12}}_{p,q,r}$ & $\dim_{\B}$ \\ 
$\B^{\overline{12}}_{p,q,r}$\rule{0pt}{3ex} & $0,1,2$ & & & \\ 
$\B^{\overline{03}}_{p,q,r}$\rule{0pt}{3ex} & $0$ & $r\leq n-3$ & & \\
& $1$ & $r\leq n-4$ & & \\ 
& $2$ & & & \\  
$\check{\B}^{\overline{23}}_{p,q,r}$\rule{0pt}{3ex} & $1,2$ & & & \\
& $0$ & $r\leq n-3$ & & \\ 
& $3$ & $r\leq n-2$ & & \\ \hline
$\B^{\overline{03}}_{p,q,r}$, $\B^{\overline{12}}_{p,q,r}$\rule{0pt}{3ex} & $3$ & & $\Lambda^{\overline{0}}_r\oplus\C^{\overline{12}}_{p,q,r}\oplus\C^{n}_{p,q,r}$ & $\dim_{\B}+1$\\
$\check{\B}^{\overline{23}}_{p,q,r}$ & $3$ & $r=n-1,n$ && \\ \hline
$\B^{\overline{03}}_{p,q,r}$, $\check{\B}^{\overline{23}}_{p,q,r}$ \rule{0pt}{3ex}& $0$ & $r=n$ & $\Lambda^{\overline{0}}_r\oplus\C^{\overline{12}}_{p,q,r}\oplus\Lambda^{n-1}_r$ & $\dim_{\B}+n$ \\ \hline
$\B^{\overline{03}}_{p,q,r}$\rule{0pt}{3ex} & $1$ & $r=n$ & $\Lambda^{\overline{0}}_r\oplus\C^{\overline{12}}_{p,q,r}\oplus\Lambda^{n-2}_r$ & $\dim_{\B}+\frac{n(n-1)}{2}$ \\ \hline
$\B^{\overline{03}}_{p,q,r}$, $\check{\B}^{\overline{23}}_{p,q,r}$\rule{0pt}{3ex}& $0$& $r=n-2$ & $\Lambda^{\overline{0}}_r\oplus\C^{\overline{12}}_{p,q,r}\oplus\{\C^{1}_{p,q,0}\Lambda^{n-2}_r\}\oplus\{\C^2_{p,q,0}\Lambda^{n-2}_r\}$ & $\dim_{\B}+3$ \\ \hline
$\B^{\overline{03}}_{p,q,r}$\rule{0pt}{3ex} & $1$ & $r=n-3$ & $\Lambda^{\overline{0}}_r\oplus\C^{\overline{12}}_{p,q,r}\oplus\{\C^{1}_{p,q,0}\Lambda^{n-3}_r\}\oplus\{\C^2_{p,q,0}\Lambda^{n-3}_r\}$ & $\dim_{\B}+6$ \\ \hline
$\B^{\overline{03}}_{p,q,r}$, $\check{\B}^{\overline{23}}_{p,q,r}$\rule{0pt}{3ex} & $0$  & $r=n-1$ & $\Lambda^{\overline{0}}_r\oplus\C^{\overline{12}}_{p,q,r}\oplus\Lambda^{n-1}_r\oplus\{\C^{1}_{p,q,0}\Lambda^{n-2}_r\}\oplus\{\C^1_{p,q,0}\Lambda^{n-1}_r\}$ & $\dim_{\B}+n+1$ \\ \hline
$\B^{\overline{03}}_{p,q,r}$\rule{0pt}{3ex} & $1$& $r=n-1$ & $\Lambda^{\overline{0}}_r\oplus\C^{\overline{12}}_{p,q,r}\oplus\Lambda^{n-2}_r\oplus\{\C^{1}_{p,q,0}\Lambda^{n-3}_r\}\oplus\{\C^1_{p,q,0}\Lambda^{n-2}_r\}$ & $\dim_{\B}+\frac{(n-1)(n+2)}{2}$ \\ \hline
$\B^{\overline{03}}_{p,q,r}$\rule{0pt}{3ex} & $1$ & $r=n-2$ & $\Lambda^{\overline{0}}_r\oplus\C^{\overline{12}}_{p,q,r}\oplus\Lambda^{n-2}_r\oplus\{\C^{1}_{p,q,0}\Lambda^{n-3}_r\}$ & $\dim_{\B}+3n-3$ \\ 
& & & $\oplus\{\C^1_{p,q,0}\Lambda^{n-2}_r\}\oplus\{\C^2_{p,q,0}\Lambda^{n-3}_r\}$& \\ \hline
\end{tabular}
\end{table}
\end{landscape}

\begin{landscape}
\begin{table}
\caption{Lie groups of $\Q$-type and the corresponding Lie algebras (part 1)}\label{table_lie_algebras_Q1} 
\begin{tabular}{| c | c | c | c | c |}  \hline
Lie group  & $n\mod{4}$ & $r$ & Lie algebra & Dimension\\ \hline
$\tilde{\Q}^{\overline{01}}_{p,q,r}$\rule{0pt}{3ex} & $0$ & & $\Lambda^{\overline{013}}_{r}\oplus\C^{\overline{2}}_{p,q,r}$ & $\dim_{\Q}$ \\
& $1$ & $r=n$ & & \\
& $2$ & $r=n$, $r\leq n-4$ & & \\
& $3$ & $r=n$, $r\leq n-5$ & & \\ 
$\tilde{\Q}^{\overline{23}}_{p,q,r}$\rule{0pt}{3ex} & $0$ & $r=n$, $r\leq n-4$ & & \\ 
& $1$ & $r=n$ & & \\ 
& $2$ & & & \\ 
& $3$ & $r=n$, $r\leq n-3$ & & \\ 
$\tilde{\Q}^{\overline{12}}_{p,q,r}$\rule{0pt}{3ex} & $0,1,2$ & & & \\
& $3$ & $r=n$ & & \\ 
$\tilde{\Q}^{\overline{03}}_{p,q,r}$\rule{0pt}{3ex}  & $0$ & $r=n$, $r\leq n-4$ & & \\ 
& $1$ & $r=n$, $r\leq n-5$ &  & \\
& $2$ & $r=n$, $r\leq n-4$ & & \\ 
& $3$ & $r=n$ & &  \\ \hline
$\tilde{\Q}^{\overline{01}}_{p,q,r}$, $\tilde{\Q}^{\overline{23}}_{p,q,r}$\rule{0pt}{3ex} & $1$& $r\neq n$& $\Lambda^{\overline{013}}_{r}\oplus\C^{\overline{2}}_{p,q,r}\oplus\C^n_{p,q,r}$ & $\dim_{\Q}+1$ \\ 
$\tilde{\Q}^{\overline{23}}_{p,q,r}$ & $3$ & $r=n-2,n-1$ & & \\ 
$\tilde{\Q}^{\overline{12}}_{p,q,r}$, $\tilde{\Q}^{\overline{03}}_{p,q,r}$\rule{0pt}{3ex} & $3$ & $r\neq n$ & & \\ \hline
$\tilde{\Q}^{\overline{01}}_{p,q,r}$, $\tilde{\Q}^{\overline{03}}_{p,q,r}$\rule{0pt}{3ex} & $2$ & $r=n-1$ & $\Lambda^{\overline{013}}_{r}\oplus\C^{\overline{2}}_{p,q,r}\oplus\{\C^1_{p,q,0}\Lambda^{n-2}_r\}$ & $\dim_{\Q}+n-1$ \\ \hline
$\tilde{\Q}^{\overline{23}}_{p,q,r}$, $\tilde{\Q}^{\overline{03}}_{p,q,r}$\rule{0pt}{3ex}  & $0$ & $r=n-1$ & $\Lambda^{\overline{013}}_{r}\oplus\C^{\overline{2}}_{p,q,r}\oplus\{\C^1_{p,q,0}\Lambda^{n-2}_r\}\oplus\C^n_{p,q,r}$  & $\dim_{\Q}+n$ \\ \hline
$\tilde{\Q}^{\overline{01}}_{p,q,r}$\rule{0pt}{3ex} & $3$& $r=n-1$& $\Lambda^{\overline{013}}_{r}\oplus\C^{\overline{2}}_{p,q,r}\oplus\{\C^1_{p,q,0}\Lambda^{n-3}_r\}$ & $\dim_{\Q}+\frac{(n-3)(n-2)}{2}$, if $n\geq 7$; \\
& & & & $\dim_{\Q}+1$, if $n=3$\\ \hline
\end{tabular}
\end{table}

\begin{table}
\caption{Lie groups of $\Q$-type and the corresponding Lie algebras (part 2)}\label{table_lie_algebras_Q2} 
\begin{tabular}{| c | c | c | c | c |}  \hline
Lie group  & $n\mod{4}$ & $r$ & Lie algebra & Dimension\\ \hline
$\tilde{\Q}^{\overline{01}}_{p,q,r}$, $\tilde{\Q}^{\overline{03}}_{p,q,r}$\rule{0pt}{3ex} & $2$ & $r=n-3$ & $\Lambda^{\overline{013}}_{r}\oplus\C^{\overline{2}}_{p,q,r}\oplus\{\C^2_{p,q,0}\Lambda^{n-3}_r\}$ & $\dim_{\Q}+3$ \\ 
$\tilde{\Q}^{\overline{03}}_{p,q,r}$\rule{0pt}{3ex} & $0$ & $r=n-3$ & & \\ \hline
$\tilde{\Q}^{\overline{23}}_{p,q,r}$\rule{0pt}{3ex} & $0$ & $r=n-3$ & $\Lambda^{\overline{013}}_{r}\oplus\C^{\overline{2}}_{p,q,r}\oplus\{\C^2_{p,q,0}\Lambda^{n-3}_r\}\oplus\C^{n}_{p,q,r}$  & $\dim_{\Q}+4$ \\ \hline
$\tilde{\Q}^{\overline{01}}_{p,q,r}$\rule{0pt}{3ex} & $3$& $r=n-4$ & $\Lambda^{\overline{013}}_{r}\oplus\C^{\overline{2}}_{p,q,r}\oplus\{\C^2_{p,q,0}\Lambda^{n-4}_r\}$ & $\dim_{\Q}+6$\\
$\tilde{\Q}^{\overline{03}}_{p,q,r}$\rule{0pt}{3ex} & $1$ & $r=n-4$& & \\ \hline
$\tilde{\Q}^{\overline{01}}_{p,q,r}$, $\tilde{\Q}^{\overline{03}}_{p,q,r}$\rule{0pt}{3ex} & $2$& $r=n-2$& $\Lambda^{\overline{013}}_{r}\oplus\C^{\overline{2}}_{p,q,r}\oplus\{\C^1_{p,q,0}\Lambda^{n-2}_r\}\oplus\{\C^2_{p,q,0}\Lambda^{n-3}_r\}$ & $\dim_{\Q}+n$  \\  \hline
$\tilde{\Q}^{\overline{23}}_{p,q,r}$, $\tilde{\Q}^{\overline{03}}_{p,q,r}$\rule{0pt}{3ex} & $0$& $r=n-2$& $\Lambda^{\overline{013}}_{r}\oplus\C^{\overline{2}}_{p,q,r}$ & $\dim_{\Q}+n+1$ \\  
& & & $\oplus\{\C^1_{p,q,0}\Lambda^{n-2}_r\}\oplus\{\C^2_{p,q,0}\Lambda^{n-3}_r\}\oplus\C^n_{p,q,r}$ & \\ \hline
$\tilde{\Q}^{\overline{01}}_{p,q,r}$\rule{0pt}{3ex}& $3$& $r=n-3,n-2$& $\Lambda^{\overline{013}}_{r}\oplus\C^{\overline{2}}_{p,q,r}$ & $\dim_{\Q}+\frac{(p+q)r!}{(r-n+3)!(n-3)!}$   \\
& & & $\oplus\{\C^1_{p,q,0}\Lambda^{n-3}_r\}\oplus\{\C^2_{p,q,0}\Lambda^{n-4}_r\}$ & $+\frac{(p+q)(p+q-1)r!}{2(r-n+4)!(n-4)!}$ if $n\geq7$; \\
& & & & $\dim_{\Q}+p+q$ if $n=3$ \\ \hline
$\tilde{\Q}^{\overline{03}}_{p,q,r}$\rule{0pt}{3ex} & $1$ & $r=n-3$ & $\Lambda^{\overline{013}}_{r}\oplus\C^{\overline{2}}_{p,q,r}\oplus\{\C^1_{p,q,0}\Lambda^{n-3}_r\}$ & $\dim_{\Q}+3n-3$ \\
& & & $\oplus\{\C^2_{p,q,0}\Lambda^{n-4}_r\}\oplus\{\C^2_{p,q,0}\Lambda^{n-3}_r\}$& \\ \hline
$\tilde{\Q}^{\overline{03}}_{p,q,r}$\rule{0pt}{3ex} & $1$ & $r=n-1$ &  $\Lambda^{\overline{013}}_{r}\oplus\C^{\overline{2}}_{p,q,r}$ & $\dim_{\Q}+n+\frac{(n-1)(n-2)}{2}$\\ 
& & & $\oplus\{\C^{1}_{p,q,0}\Lambda^{n-2}_r\} \oplus\{\C^{1}_{p,q,0}\Lambda^{n-3}_r\}\oplus\C^n_{p,q,r}$ & \\ \hline
$\tilde{\Q}^{\overline{03}}_{p,q,r}$\rule{0pt}{3ex} & $1$ & $r=n-2$ & $\Lambda^{\overline{013}}_{r}\oplus\C^{\overline{2}}_{p,q,r}\oplus\{\C^{1}_{p,q,0}\Lambda^{n-2}_r\}\oplus\{\C^{1}_{p,q,0}\Lambda^{n-3}_r\} $ & $\dim_{\Q}+\frac{n(n+1)}{2}$ \\ 
& & & $\oplus\{\C^{2}_{p,q,0}\Lambda^{n-3}_r\}\oplus\{\C^{2}_{p,q,0}\Lambda^{n-4}_r\}\oplus\C^n_{p,q,r}$ & \\ \hline
\end{tabular}
\end{table}
\end{landscape}

\bibliographystyle{spmpsci}
\bibliography{myBibLib}

\end{document}